\numberwithin{equation}{section}
\newtheorem{thm}{Theorem}[section]
\newtheorem{lem}[thm]{Lemma}
\newtheorem{cor}[thm]{Corollary}
\newtheorem{prop}[thm]{Proposition}
\theoremstyle{remark}
\newtheorem{rmk}[thm]{Remark}
\theoremstyle{definition}
\newtheorem{defi}[thm]{Definition}
\newtheorem{defithm}[thm]{Definition-Theorem}
\setlist[enumerate,1]{label={\rm(\arabic*)}, ref={\rm\arabic*}}
\newcommand{\lra}{\longrightarrow}
\DeclareMathOperator{\Aut}{Aut}
\DeclareMathOperator{\Bl}{Bl}
\DeclareMathOperator{\diag}{diag}
\DeclareMathOperator{\FL}{FL}
\DeclareMathOperator{\Fut}{Fut}
\DeclareMathOperator{\GL}{GL}
\DeclareMathOperator{\PGL}{PGL}
\DeclareMathOperator{\Hom}{Hom}
\DeclareMathOperator{\ord}{ord}
\DeclareMathOperator{\Kss}{Kss}
\DeclareMathOperator{\Pic}{Pic}
\DeclareMathOperator{\Proj}{Proj}
\DeclareMathOperator{\Spec}{Spec}
\DeclareMathOperator{\Supp}{Supp}
\DeclareMathOperator{\vol}{vol}
\def\min{{\operatorname{min}}}
\def\max{{\operatorname{max}}}
\def\inf{{\operatorname{inf}}}
\def\sup{{\operatorname{sup}}}
\def\lim{{\mathrm{lim}}}
\def\id{{\mathrm{id}}}
\def\wt{{\mathrm{wt}}}
\def\Gr{\mathrm{Gr}}
\def\CGr{{\rm CGr}}
\def\spn{\operatorname{span}}
\def\log{\operatorname{log}}
\def\Val{{\mathrm{Val}}}
\def\DH{{\mathrm{DH}}}
\def\Sym{{\mathrm{Sym}}}
\def\pr{{\mathrm{pr}}}
\def\pt{{\mathrm{pt}}}
\def\aut{\mathfrak{aut}}
\def\sl{\mathfrak{sl}}
\def\ng{\mathfrak{g}}
\def\SL{{\mathrm{SL}}}
\def\nv{\mathfrak{v}}
\newcommand{\IA}{{\mathbb A}}
\newcommand{\IC}{{\mathbb C}}
\newcommand{\IF}{{\mathbb F}}
\newcommand{\IG}{{\mathbb G}}
\newcommand{\IN}{{\mathbb N}}
\newcommand{\IP}{{\mathbb P}} 
\newcommand{\IQ}{{\mathbb Q}} 
\newcommand{\IR}{{\mathbb R}}
\newcommand{\IT}{{\mathbb T}}
\newcommand{\IZ}{{\mathbb Z}}
\newcommand{\CC}{{\mathcal C}}
\newcommand{\CE}{{\mathcal E}}
\newcommand{\CF}{{\mathcal F}}
\newcommand{\CL}{{\mathcal L}}
\newcommand{\CO}{{\mathcal O}}
\newcommand{\CX}{{\mathcal X}}
\newcommand{\tB}{{\tilde{B}}}
\newcommand{\tC}{{\tilde{C}}}
\newcommand{\tD}{{\tilde{D}}} 
\newcommand{\tE}{{\tilde{E}}}
\newcommand{\tH}{{\tilde{H}}}
\newcommand{\tL}{{\tilde{L}}}
\newcommand{\tM}{{\tilde{M}}}
\newcommand{\tQ}{{\tilde{Q}}} 
\newcommand{\tR}{{\widetilde{R}}}
\newcommand{\tT}{{\tilde{T}}}
\newcommand{\tW}{{\widetilde{W}}}
\newcommand{\tY}{{\tilde{Y}}}
\newcommand{\ttE}{{\tilde{\tilde{E}}}}
\newcommand{\ttH}{{\tilde{\tilde{H}}}}
\newcommand{\ttL}{{\tilde{\tilde{L}}}}
\newcommand{\seq}{\subseteq}
\newcommand{\lam}{\lambda}
\newcommand{\la}{\langle}
\newcommand{\ra}{\rangle}
\renewcommand{\D}{\Delta}
\newcommand{\de}{\delta}
\newcommand{\bu}{\bullet}
\newcommand{\bC}{\mathbb{C}}
\newcommand{\Bv}{\mathbf{v}}
\def\BP{\mathbf{P}}
\def\BO{\mathbf{O}}
\def\Bv{\mathbf{v}}
\def\BH{\mathbf{H}}
\def\B0{\mathbf{0}}
\newcommand{\relmiddle}[1]{\mathrel{}\middle#1\mathrel{}}
\newcommand{\GM}{\mathrm{GM}}
\newcommand{\addappendix}{%
  \appendix
  \phantomsection 
  \setcounter{section}{0}
  \section*{\appendixname}
  \addcontentsline{toc}{section}{\appendixname}
  \setcounter{section}{1}
  \setcounter{thm}{0}
}
\title{K-stability of special Gushel--Mukai manifolds}
\author{Yuchen Liu}
\address{Department of Mathematics, Northwestern University, Evanston, IL 60208, USA}
\email{yuchenl@northwestern.edu}
\author{Linsheng Wang} 
\address{Shanghai Center for Mathematical Sciences, Fudan University, Shanghai, 200438, China}
\address{Department of Mathematics, Nanjing University, Nanjing, 210008, China}
\address{Kaiyuan International Mathematical Sciences Institute, Changsha, 410000, China}
\email{linsheng\_wang@outlook.com}
\begin{document}



\maketitle

\begin{prelims}

\DisplayAbstractInEnglish

\bigskip

\DisplayKeyWords

\medskip

\DisplayMSCclass

\end{prelims}


\newpage

\setcounter{tocdepth}{1}

\tableofcontents


\section{Introduction}

The theory of K-stability, first introduced by Tian \cite{Tia97} and later reformulated algebraically by Donaldson \cite{Don02}, is an algebraic stability theory that characterizes the existence of canonical  metrics on Fano varieties. According to the Yau--Tian--Donaldson correspondence, see \cite{CDS15, Tia15, Ber16, BBJ18, LTW19, Li19, LXZ21}, a Fano variety is K-polystable if and only if it admits a K\"ahler--Einstein metric. Thus it is an important problem to check K-stability for explicit Fano manifolds. In dimension $2$, K-stability of Del Pezzo surfaces was completely determined in \cite{Tia90}. For Fano threefolds, this problem was largely solved; see \textit{e.g.} \cite{LX19, AZ21, ACC+}. In higher dimensions, much progress has been made for Fano hypersurfaces; see \textit{e.g.} \cite{Che01, Fuj19, AZ22, AZ21, Liu22}. Despite these developments, it remains a challenge to determine K-stability for most Fano manifolds in higher dimensions.

In this paper, we study the K-stability of Gushel--Mukai manifolds which are specific families of $n$\nobreakdash-dimensional Fano manifolds of Picard rank $1$ and index $n-2$, where $3\leq n \leq 6$. 
Let us recall relevant notions. Let $X$ be an $n$-dimensional Fano manifold with Picard group generated by an ample divisor $H$. The {\it index} of~$X$  is the positive integer $r$ such that $-K_X\sim rH$. The self-intersection number $d:=(H^n)$ is called the {\it degree} of $X$. 
For $r\ge n$, Kobayashi and Ochiai proved in \cite{KO73} that $X$ is isomorphic to either 
$\IP^n$ (where $r = n+1$) or a smooth quadric hypersurface in $\IP^{n+1}$ (where $r=n$). 
Fano manifolds with index $r=n-1$, known as {\it Del Pezzo manifolds}, were classified by T.~Fujita and Iskovskikh from 1977 to 1988; see for example \cite{Fuj90}. 
Fano manifolds with index $r=n-2$ were studied by Gushel \cite{Gus83} and Mukai \cite{Muk89}. It was proved by Mukai that there are only $10$ possibilities for the degree $d$. For the case $d=10$, the corresponding Fano manifolds are called {\it Gushel--Mukai} ({\it GM\,}) {\it manifolds}. See \cite{Deb20} for a comprehensive survey on Gushel--Mukai manifolds. 

There are two types of GM manifolds: {\it ordinary GM $n$-folds} $Q_n$ are quadratic sections of quintic Del Pezzo $(n+1)$-folds $M_{n+1}$ (which are smooth $(n+1)$-dimensional linear sections of $\Gr(2,5)$); {\it special GM $n$-folds} $X_n$ are double covers of quintic Del Pezzo $n$-folds $M_n$ branched along an ordinary GM $(n-1)$-fold $Q_{n-1}$. For $3\le n\le 6$, GM $n$-folds are Fano manifolds. It was proved by \cite[Theorem 5.1]{AZ21} that all the smooth GM threefolds are K-stable.

Our first main result proves the K-stability for a general member of special GM manifolds.

\begin{thm} 
\label{Theorem: Intro, general special GM are K-stable}
A general special GM manifold of dimension $4$, $5$ or $6$ is K-stable. 
\end{thm}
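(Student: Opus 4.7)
The plan is to reduce K-stability of a special Gushel-Mukai $n$-fold $X_n$ to that of the log Fano pair $(M_n, \frac{1}{2}Q_{n-1})$, where $M_n$ is the quintic Del Pezzo $n$-fold and $Q_{n-1}\seq M_n$ is the ordinary GM $(n-1)$-fold along which the double cover $X_n\to M_n$ is branched. By the standard finite-cover equivalence for K-stability (Liu--Zhu, Zhuang), $X_n$ is K-(semi/poly)stable if and only if $(M_n, \frac{1}{2}Q_{n-1})$ is. Thus it suffices to prove K-stability of a general such log pair.

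\textbf{Wall-crossing in $c$.} The next step is to vary the coefficient: consider the K-moduli wall-chamber decomposition for pairs $(M, cQ)$, with $M$ a (possibly degenerate) quintic Del Pezzo $n$-fold and $Q$ an appropriate anticanonical-type section, as $c$ ranges over some interval $(c_1, c_k)\seq (0,1)$. Using the description of the first and last walls $c_1, c_k$ announced in the abstract, I would verify that $c = \frac{1}{2}$ lies strictly inside the K-semistable range. Since K-polystability is open on each chamber and wall-crossings only introduce strictly semistable replacements, generic K-polystability at any single coefficient in $(c_1,c_k)$ propagates to generic K-polystability at $c = \frac{1}{2}$.

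\textbf{Exhibiting a K-polystable pair.} The crux is producing at least one K-polystable pair $(M_n, c_0 Q_{n-1})$ for some $c_0\in(c_1,c_k)$. In the threefold case one may take $c_0 = 0$ since $M_3$ is K-polystable, but for $n\ge 4$ the ambient $M_n$ is K-unstable (Fujita, and the $\delta$-invariant computations elsewhere in this paper), so this shortcut is unavailable. Instead, I would fix a branch divisor $Q_{n-1}$ preserved by a large subgroup $G\le \Aut(M_n)$ (the torus, together with an $\SL_2$-factor in low-dimensional cases), and estimate the equivariant $\delta$-invariant $\delta_G(M_n, c_0 Q_{n-1})$ via the Abban--Zhuang adjunction-and-filtration method: choose a $G$-invariant prime divisor $E$ so that inversion of adjunction reduces the estimate to a lower-dimensional log Fano pair (a Del Pezzo variety or GM subvariety with boundary), and iterate until reaching a base case of known K-stability (e.g.\ a quintic Del Pezzo surface, or a GM threefold).

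\textbf{Main obstacle.} The decisive difficulty is the equivariant $\delta$-computation: the contribution of the boundary $c_0 Q_{n-1}$ to the log canonical thresholds of the refined filtrations along $E$ must be tracked carefully through each step of the induction, and the slackness left over at the base case must still exceed $1$. Once one K-polystable pair is found, openness of K-(semi)stability on the K-moduli together with the wall-crossing in the previous step propagates the conclusion to a general pair $(M_n, \frac{1}{2}Q_{n-1})$, and hence to a general special GM $n$-fold via the cyclic cover equivalence.
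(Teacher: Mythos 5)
Your reduction to the log pair $(M_n,\tfrac12 Q_{n-1})$ and the idea of anchoring at a highly symmetric $G$-invariant branch divisor via the equivariant Abban--Zhuang method do match ingredients of the paper (this is essentially how the paper proves K-polystability of $(M_4,\tfrac19 Q_0)$ and $(M_5,\tfrac18 Q_0)$ at the first wall). However, there is a genuine gap in your final step. A quadric section $Q_0$ invariant under a torus (let alone a torus times an $\SL_2$-factor) is necessarily singular, so the pair $(M_n,c_0Q_0)$ you produce has positive-dimensional automorphism group and is at best K-polystable, never K-stable. Openness holds for K-semistability and for (uniform) K-stability, but not for K-polystability; so from your single K-polystable pair, openness plus interpolation only yields that the general pair $(M_n,\tfrac12 Q_{n-1})$ is K-\emph{semi}stable. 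A K-semistable pair with finite automorphism group need not be K-stable (it can be strictly semistable, degenerating to a polystable object with more symmetry), so the claim that the conclusion ``propagates to a general pair'' does not deliver K-stability. Relatedly, verifying that $c=\tfrac12$ lies in the K-semistable range by citing the first and last walls is close to circular: the necessary conditions from $\beta(E_S)$ and $\beta(\ord_Q)$ only bound the domain from outside, and establishing that the domain is actually attained is the hard content.

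The paper closes exactly this gap with a different mechanism. Its Theorem \ref{Theorem: strengthen of cone} (a strengthened cone/interpolation statement) shows that if the divisor $V=Q_{n-1}$ is itself K-stable and $\delta(M_n)$ is not too small, then $(M_n,cQ_{n-1})$ is genuinely K-stable for $c$ just below the last wall $1-\tfrac{r}{n}$; combined with a K-semistable anchor at small $c$ (obtained by degenerating hyperplane sections $H\cong M_{n-1}$ to projective cones and deforming, not by the first-wall computation) and interpolation, this gives K-stability at $c=\tfrac12$. This is precisely why the computations $\delta(M_4)=\tfrac{25}{27}$ and $\delta(M_5)=\tfrac{15}{16}$ are needed, and why the argument is run as an induction on $n$: K-stable GM threefolds give a K-stable ordinary $Q_3\seq M_4$, hence a K-stable special GM fourfold, hence by openness of K-stability a K-stable ordinary $Q_4\seq M_5$, and so on up to $n=6$. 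Your proposal would need to incorporate some such genuine K-stability input --- the K-stability of the branch divisor itself together with a lower bound on $\delta(M_n)$ --- rather than only a polystable symmetric anchor, to reach the stated conclusion.
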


Since the parameter space of the special GM $n$-folds is an irreducible closed subvariety of the parameter space of the GM $n$-folds, which is also irreducible (see Section~\ref{Section: Preliminaries} for details), we have the following corollary by the openness of K-stability; see  \cite{BL18b, BLX19, LXZ21}. 

\begin{cor}
\label{Corollary: Intro, general GM are K-stable}
A general GM manifold of dimension $4$, $5$ or $6$ is K-stable. 
\end{cor}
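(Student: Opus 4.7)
The plan is to deduce Corollary \ref{Corollary: Intro, general GM are K-stable} directly from Theorem \ref{Theorem: Intro, general special GM are K-stable} by combining two general principles: irreducibility of the parameter space of smooth Gushel-Mukai $n$-folds, and openness of K-stability in families of Fano manifolds. The remark preceding the corollary already points at exactly this strategy.

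First, I would recall the construction of an appropriate parameter space $\CP_n$ (e.g.\ a suitable Hilbert scheme, or the stratum of smooth members in an EPW-type parameter scheme from \cite{Deb20}, to be fixed in Section \ref{Section: Preliminaries}) whose points parametrize isomorphism classes of smooth GM $n$-folds. The relevant facts are that $\CP_n$ is irreducible, and that the special GM $n$-folds form an irreducible closed subvariety $\CP_n^{\mathrm{sp}}\subset \CP_n$. Both statements will be established in the preliminaries and ultimately rest on the classification of GM manifolds.

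Second, I would apply Theorem \ref{Theorem: Intro, general special GM are K-stable} to produce a non-empty Zariski open subset $V\subset \CP_n^{\mathrm{sp}}$ parametrizing K-stable special GM $n$-folds, for $4\le n\le 6$. Then I would invoke openness of K-stability for families of Fano manifolds, as established in \cite{BL18b, BLX19, LXZ21}, to obtain a Zariski open locus $U\subset \CP_n$ parametrizing K-stable members. Since $U\cap \CP_n^{\mathrm{sp}}\supseteq V$, the open set $U$ is non-empty; by irreducibility of $\CP_n$ it is automatically dense. This is precisely the statement that a general GM $n$-fold is K-stable.

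There is essentially no conceptual obstacle in this argument: once Theorem \ref{Theorem: Intro, general special GM are K-stable} is granted, the corollary is bookkeeping. The only mild technical point is to ensure that the openness result of \cite{BLX19, LXZ21} is applied to a family whose base dominates $\CP_n$ and whose specialization contains the special GM locus $\CP_n^{\mathrm{sp}}$, so that non-emptiness transfers from $V$ to $U$. This is standard and will follow from the explicit construction of $\CP_n$ given in the preliminaries.
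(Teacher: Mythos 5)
Your proposal is correct and follows exactly the paper's own reasoning: the paper derives the corollary from Theorem \ref{Theorem: Intro, general special GM are K-stable} by citing irreducibility of the moduli space $\mathbf{M}_n^{\GM}$ of smooth GM $n$-folds, the fact that $\mathbf{M}_{n,\mathrm{spe}}^{\GM}$ is an irreducible closed subvariety of it, and openness of K-stability from \cite{BL18b, BLX19, LXZ21}. No further comment is needed.
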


Recall that a special GM $n$-fold $X_n$ admits a double cover $X_n\to M_n$ branched along an ordinary GM $(n-1)$-fold $Q_{n-1}$ (see Section~\ref{Section: Preliminaries}). By \cite[Theorem 1.2]{LZ20} and \cite{Zhu21}, we know that the K-polystability of $X_n$ is equivalent to the K-polystability of the log Fano pair $(M_n, \frac{1}{2}Q_{n-1})$.  Hence it is natural to first study the K-stability of quintic Del Pezzo manifolds $M_n$. 

The quintic Del Pezzo surfaces and threefolds are K-polystable, which were proved by \cite{Tia90} and \cite{CS09}, respectively. However, the quintic Del Pezzo fourfolds and fivefolds are K-unstable. They are among the first examples of K-unstable Fano manifolds with Picard number $1$ found by K.~Fujita \cite{Fuj17}. He found a $2$-plane $S\seq M_4\seq M_5$ (which is of ``non-vertex type'', see Section~\ref{Subsection: Properties of Del Pezzo fourfolds} for details) such that $\beta(E_S)<0$, where $E_S$ is the exceptional divisor of the blowup of $M_4$ or $M_5$ along $S$. We further show the following.

\begin{thm} 
\label{Theorem: Intro, delta of DP4}
The delta invariant of $M_4$ is $\delta(M_4)=\frac{25}{27}$, which is minimized by $E_S$. 
\end{thm}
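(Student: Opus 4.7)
\emph{Upper bound.} The plan is to compute $A_{M_4}(E_S)$ and $S_{M_4}(E_S)$ directly for Fujita's non-vertex $2$-plane $S\subset M_4$. Since $S$ is smooth of codimension $2$ inside the smooth fourfold $M_4$, one has $A_{M_4}(E_S)=2$. Writing $H$ for the hyperplane class (so $-K_{M_4}\sim 3H$ and $(H^4)=5$, hence $(-K_{M_4})^4=405$), I would set $\pi\colon\widetilde M_4\to M_4$ to be the blowup along $S$ and compute
\[
S(E_S)\;=\;\frac{1}{405}\int_0^{\tau}\mathrm{vol}\bigl(3\pi^*H-tE_S\bigr)\,dt,
\]
using the intersection table of \cite{Fuj17} together with the Zariski decomposition of $3\pi^*H-tE_S$ across the interval $[0,\tau]$. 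The resulting piecewise-polynomial integration should yield $S(E_S)=\tfrac{54}{25}$, hence $A(E_S)/S(E_S)=\tfrac{25}{27}$.

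\emph{Lower bound.} To show $\delta(M_4)\ge\tfrac{25}{27}$, my plan is to apply the Abban--Zhuang adjunction formula with $E_S$ as the plt divisor. For a subvariety $Z\subseteq S$, this yields
\[
\delta_Z(M_4)\;\ge\;\min\!\left\{\frac{A_{M_4}(E_S)}{S_{M_4}(E_S)},\ \delta_Z\bigl(E_S;V^{E_S}_{\bullet,\bullet}\bigr)\right\}\!,
\]
where $V^{E_S}_{\bullet,\bullet}$ is the multigraded linear series induced on $E_S$. Since $E_S\simeq\mathbb{P}(N^\vee_{S/M_4})$ is a $\mathbb{P}^1$-bundle over $\mathbb{P}^2$, a second application of Abban--Zhuang along a ruling or section should reduce the required bound to concrete surface-and-then-curve computations, which I expect to come out to exactly $\tfrac{25}{27}$ in the worst case. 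For centers not contained in any non-vertex $2$-plane, I plan to apply the same flag technique starting from a general hyperplane section $M_3\subset M_4$ (a Del Pezzo threefold, known to be K-polystable by \cite{CS09}), or to bound $A/S$ directly via restriction inequalities; in either case one expects $\delta_Z(M_4)\ge 1>\tfrac{25}{27}$, reflecting the fact that non-vertex $2$-planes are the unique obstruction to K-stability of $M_4$.

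\emph{Main obstacle.} The chief difficulty lies in the iterated Abban--Zhuang computation on $E_S$: one must identify the fixed and movable parts of $V^{E_S}_{\bullet,\bullet}$, which depends on the pattern of other $2$-planes meeting $S$ and on the normal bundle $N_{S/M_4}$. A second difficulty is handling the boundary loci $Z=S\cap S'$ between distinct non-vertex $2$-planes, where both the Zariski decomposition used for $S(E_S)$ and the multigraded linear series on $E_S$ can become more complicated. Tracking these contributions carefully, and ruling out any alternative valuation yielding a smaller ratio than $\tfrac{25}{27}$, constitutes the bulk of the technical work.
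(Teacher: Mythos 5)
Your upper bound is fine and agrees with the paper: $A_{M_4}(E_S)=2$, and the Zariski-decomposition computation (using $\pi^*\CO_M(1)-uE_S\sim_\IQ(2-u)\tau^*\CO_{\IP^4}(1)-(1-u)\tilde D_9$, ample for $u<1$ with fixed part $(u-1)\tilde D_9$ for $1\le u\le 2$) does give $S(-K_{M_4};E_S)=\tfrac{54}{25}$, hence $A/S=\tfrac{25}{27}$.

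The genuine gap is in your lower bound, specifically the case of centers $Z$ not contained in $S$. You propose to handle these by restricting to a general hyperplane section $M_3$ or by ``restriction inequalities'' and assert that ``one expects $\delta_Z(M_4)\ge 1$''; but this is precisely the hard, unaddressed content — it amounts to a pointwise K-semistability verification over all of $M_4\setminus S$, and neither a Tian-type restriction bound nor adjunction to a K-polystable hyperplane section delivers $\delta_Z\ge\tfrac{25}{27}$ without substantial further work. The paper sidesteps this entirely: by \cite{LXZ21,Zhu21} the delta invariant of $M_4$ is computed by an $\Aut(M_4)$-invariant quasi-monomial valuation, whose center is therefore a union of $\Aut(M_4)$-orbit closures; since the orbit decomposition $M=(M\setminus D_9)\sqcup(D_9\setminus S)\sqcup(S\setminus C_2)\sqcup C_2$ shows that the conic $C_2\subseteq S$ is the unique minimal orbit, one has $\delta(M_4)=\delta_{C_2}(M_4)$, and a \emph{single} flag computation suffices. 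That reduction is the missing idea in your plan; it also renders moot your worries about intersections $S\cap S'$ of distinct non-vertex planes (the non-vertex plane $S$ is unique and $\Aut(M_4)$-invariant). Two further points on the computation you do sketch: the paper needs the strengthened Abban--Zhuang estimate (infimum only over the refinement center, \cite[Lemma 4.9]{Zhu21}), and the flag it uses on $E_S$ is not a ruling or section of the $\IP^1$-bundle $E_S\to S$ but the divisor $F=\tilde D_9\cap E_S$ (the exceptional divisor of $\tau_{E_S}\colon E_S\to\IP^3$ over the twisted cubic) followed by the tangency curve $C=F\cap\tilde T$, yielding $\min\{\tfrac{25}{9},5,5\}=\tfrac{25}{9}$ for $\delta_{C_2}(M_4,\CO(1))$, i.e.\ exactly $\tfrac{25}{27}$ for $-K_{M_4}$.
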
 

However, the delta invariant of $M_5$ is not minimized by $E_S$ (where $\frac{A_{M_5}(E_S)}{S(E_S)} = \frac{45}{46}$, see Section~\ref{Subsection: Aut of M_5} for details). There exists a $3$-space $W=\IP^3\seq M_5$ such that  $\frac{A_{M_5}(E_W)}{S(E_W)}=\frac{15}{16} < \frac{45}{46} $, where $E_W$ is the exceptional divisor of the blowup of $M_5$ along $W$. We show the following.  

\begin{thm} 
\label{Theorem: Intro, delta of DP5}
The delta invariant of $M_5$ is $\delta(M_5)=\frac{15}{16}$, which is minimized by $E_W$. 
\end{thm}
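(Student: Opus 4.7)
The approach is to combine an explicit upper-bound computation for $E_W$ with an equivariant lower-bound via Abban-Zhuang inversion of adjunction along the flag $W\subset M_5$.

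To verify the stated value $\tfrac{A_{M_5}(E_W)}{S(E_W)}=\tfrac{15}{16}$, blow up $\pi\colon \widetilde M_5 \to M_5$ along $W\cong \IP^3$. Since $W$ is smooth of codimension $2$ in $M_5$, one has $A_{M_5}(E_W)=2$. For $S(E_W)$, one computes the Zariski decomposition of $4\pi^{*}H - tE_W$ as $t$ runs over $[0,\tau]$ (with $\tau$ the pseudoeffective threshold) and evaluates
\[
S(E_W)=\tfrac{1}{(-K_{M_5})^{5}}\int_{0}^{\tau}\vol\bigl(4\pi^{*}H - tE_W\bigr)\,dt,
\]
normalizing by $(-K_{M_5})^{5}=4^{5}\cdot 5 = 5120$, to obtain $S(E_W)=\tfrac{32}{15}$. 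Break points of the decomposition correspond to the natural birational contractions of $\widetilde M_5$ dictated by the $\PGL_2$-equivariant geometry of $W\subset M_5$.

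For the lower bound $\delta(M_5)\ge \tfrac{15}{16}$, the plan is to use reductivity of the automorphism group. Since $\Aut(M_5)^{\circ}\cong \PGL_2$ (Section~\ref{Subsection: Aut of M_5}), Zhuang's equivariant K-stability \cite{Zhu21} gives $\delta(M_5)=\delta_{\PGL_2}(M_5)$, reducing the problem to $\tfrac{A(F)}{S(F)}\ge \tfrac{15}{16}$ for every $\PGL_2$-invariant prime divisor $F$ over $M_5$. The $\PGL_2$-action on $M_5$ is inherited from the natural action on $\IP^{4}=\IP(\Sym^{4}\IC^{2})$ and has finitely many orbits; the proper $\PGL_2$-invariant subvarieties are $W=\IP^{3}$, the non-vertex-type $2$-plane $S$ (for which the ratio $\tfrac{45}{46}>\tfrac{15}{16}$ given in the theorem already handles $E_S$), and a handful of lower-dimensional strata (rational normal curves and isolated fixed points). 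Applying Abban-Zhuang along the flag $E_W\subset \widetilde M_5$ then yields
\[
\delta_{\PGL_2}(M_5)\;\ge\;\min\!\Bigl\{\tfrac{A_{M_5}(E_W)}{S(E_W)},\;\delta_{\PGL_2}\bigl(E_W;\,W^{E_W}_{\bu,\bu}\bigr)\Bigr\}.
\]
The first term is $\tfrac{15}{16}$; the second reduces to a $\PGL_2$-equivariant $\delta$-computation on the $\IP^{1}$-bundle $E_W=\IP(N_{W/M_5})$ over $\IP^{3}=\IP(\Sym^{3}\IC^{2})$, where the refined series $W^{E_W}_{\bu,\bu}$ is determined by the Zariski decomposition from the upper-bound step.

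The main obstacle is this second step: one must determine the splitting type of the conormal bundle $N^{*}_{W/M_5}$, read off all break points of the multigraded filtration defining $W^{E_W}_{\bu,\bu}$, and verify $\tfrac{A}{S}\ge \tfrac{15}{16}$ for every $\PGL_2$-invariant valuation on the $\IP^{1}$-bundle, with equality forced only in the vertical direction corresponding to $E_W$ itself. The residual invariant divisors lying over the non-vertex-type $2$-plane $S\subset M_4\subset M_5$ are handled by a parallel Abban-Zhuang flag argument built around Theorem~\ref{Theorem: Intro, delta of DP4}.
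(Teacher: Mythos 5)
The upper-bound half of your proposal is essentially the paper's computation: $A_{M_5}(E_W)=2$ and $S(-K_{M_5};E_W)=\tfrac{32}{15}$, though your remark about ``break points of the Zariski decomposition'' is vacuous here --- $\pi^*\CO_M(1)-tE_W$ stays ample on the whole interval $(0,1)$ and is not big for $t>1$, so there is nothing to decompose; the paper gets $\vol_{\tilde M|E_W}(\CO_M(1)-tE_W)=(\sigma_1+th)^4=4t-4t^3$ directly from the Chern classes $c_1(N_{W/M_5})=0$, $c_2=\sigma_1^2$ computed by Schubert calculus.

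The lower-bound half has a genuine error that undermines the whole reduction. You assert $\Aut(M_5)^{\circ}\cong\PGL_2$ acting with finitely many orbits. This is false: Section \ref{Subsection: Aut of M_5} exhibits $\aut(M_5)$ as a $15$-dimensional \emph{non-reductive} Lie algebra, and a $3$-dimensional group cannot act with finitely many orbits on a fivefold in any case. (You appear to be importing the picture $V_5=\Sym^4\IC^2$ relevant to the quintic Del Pezzo \emph{threefold}.) By \cite[Proposition 5.3]{PV99} the true $\Aut(M_5)$-action has exactly two orbits, the open one and $W\cong\IP^3$; in particular the non-vertex plane $S$ is \emph{not} invariant, so your planned ``parallel flag argument over $S$'' addresses a case that does not arise, while your list of invariant strata (rational normal curves, fixed points) does not match the actual orbit structure. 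The correct reduction --- used in the paper --- is that the delta-minimizer is an invariant divisor whose center must contain the unique closed orbit $W$ (no reductivity is needed for this, only connectedness of the group), so $\delta(M_5)=\delta_W(M_5)$ and a single Abban--Zhuang flag through $E_W$ suffices. Finally, the step you defer as ``the main obstacle'' is exactly the content of the proof: since every relevant center dominates $W$ and $\pi\colon E_W\to W$ is a $\IP^1$-bundle, any such prime divisor satisfies $Z\sim \CO_W(a)+\CO_{E_W/W}(b)$ with $1\le b\le a$, and the explicit integral gives $S(W^{E_W}_{\bu\bu};Z)=\tfrac{1}{6a}\le\tfrac16<\tfrac{4}{15}$, which closes the argument. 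Without that computation, and with the group misidentified, the proposal does not establish $\delta(M_5)\ge\tfrac{15}{16}$.
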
 

Next, we study the wall-crossing problem of the K-moduli spaces of $(M_n, cQ_{n-1})$ for $n=4$ or $5$. Recall that for any Fano manifold $X$ and divisor $D\in |-rK_X|$, where $r>0$ is a rational number such that $rK_X$ is Cartier, the {\it K-semistable domain} of $(X,D)$ is defined by 
$$\Kss(X,D) = \left\{c\in \left[0, \min\left\{1,r^{-1}\right\}\right] \relmiddle | (X,cD) \text{ is K-semistable} \right\}. $$
We determine the K-semistable domains of $(M_4,Q_3)$ and $(M_5, Q_4)$. This is an essential step in the proof of Theorem~\ref{Theorem: Intro, general special GM are K-stable}. 

\begin{thm}
\label{Theorem: Intro, K-ss domain of DP4 and DP5}
For general $Q_3\in|\CO_{M_4}(2)|$ and $Q_4\in |\CO_{M_5}(2)|$, we have
$$\Kss(M_4,Q_3)=\left[\frac{1}{9}, \frac{7}{8}\right], \quad
\Kss(M_5,Q_4)=\left[\frac{1}{8}, \frac{4}{5}\right]. $$ 
\end{thm}

The last walls are obtained by computing beta invariants at $\ord_{Q_3}$ and $\ord_{Q_4}$. However, the first walls are more mysterious. To overcome the difficulty, we construct special quadric sections  $Q_{3,0}$ on $M_4$ and $Q_{4,0}$ on $M_5$ to obtain K-polystable pairs; see Section~\ref{Section: first and last walls} for the definition of $Q_{3,0}$ and $Q_{4,0}$. 

\begin{thm}
The log Fano pairs $(M_4,\frac{1}{9}Q_{3,0})$ and $(M_5, \frac{1}{8}Q_{4,0})$ are K-polystable. 
\end{thm}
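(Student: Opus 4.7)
The K-semistability of $(M_4,\frac{1}{9}Q_{3,0})$ and $(M_5,\frac{1}{8}Q_{4,0})$ is an immediate consequence of Theorem~\ref{Theorem: Intro, K-ss domain of DP4 and DP5}, since $1/9$ and $1/8$ are the left endpoints of the respective K-semistable domains. To upgrade K-semistability to K-polystability at the wall, I would use the characterization that a K-semistable log Fano pair is K-polystable if and only if every special test configuration with vanishing generalized Futaki invariant has central fiber isomorphic to the original pair (cf.\ \cite{LXZ21} and references therein).

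Since the first walls are caused by the divisors $E_S$ and $E_W$ - the minimizers of $\delta(M_4)$ and $\delta(M_5)$ from Theorems~\ref{Theorem: Intro, delta of DP4} and~\ref{Theorem: Intro, delta of DP5} - the valuations $\ord_{E_S}$ and $\ord_{E_W}$ satisfy $\beta_{(M_4,\frac{1}{9}Q_{3,0})}(E_S)=0$ and $\beta_{(M_5,\frac{1}{8}Q_{4,0})}(E_W)=0$, so each induces a nontrivial special test configuration with vanishing Futaki invariant. To rule out other potential obstructions, I would exhibit a torus $T\subset\Aut(M_n,Q_{n-1,0})$ compatible with the geometry of $S\subset M_4$ (respectively $W\subset M_5$): the construction of $Q_{3,0}$ and $Q_{4,0}$ in Section~\ref{Section: first and last walls} is designed precisely to furnish such a symmetry, descended from the natural torus action on $\Gr(2,5)$. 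By equivariant K-stability this reduces the classification of $\Fut=0$ special test configurations to the $T$-equivariant ones, which should amount to essentially the $E_S$ (respectively $E_W$) degeneration.

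The main obstacle is to identify the central fiber of the special test configuration induced by $E_S$ (respectively $E_W$) and to verify that it is isomorphic to the original pair $(M_n,Q_{n-1,0})$, showing the degeneration is of product type. Concretely, one computes the deformation to the normal cone of $S\subset M_4$ (respectively $W\subset M_5$), traces the flat limit of $Q_{3,0}$ (respectively $Q_{4,0}$) under the induced $\IG_m$-action, and matches the limit back with the original quadric section up to isomorphism. The whole argument thus rests on the explicit symmetric choice of $Q_{n-1,0}$: for a generic $Q_{n-1}$ the same degeneration would yield a non-isomorphic K-polystable limit, producing only strict K-semistability at the wall.
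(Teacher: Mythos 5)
There are two genuine problems with your plan. First, the K-semistability step is circular and, separately, inapplicable: in the paper, the first-wall endpoints of Theorem \ref{Theorem: Intro, K-ss domain of DP4 and DP5} are \emph{deduced from} the present theorem (via openness of K-semistability, degenerating a general $Q_3$ to $Q_{3,0}$), so you cannot cite that theorem here; moreover $Q_{3,0}=\{X_1^2-4X_0X_4=0\}$ and $Q_{4,0}=\{X_0X_2+X_1X_3=0\}$ are special, singular, $G$-invariant quadric sections, not general members of $|\CO_M(2)|$, so a statement about general $Q_3$, $Q_4$ would not cover them in any case. The paper instead proves K-semistability directly: since $C_2$ (resp.\ $l_{23}$) is a minimal orbit of the reductive group $G=\Aut(M,Q_0)$, equivariant K-stability reduces everything to $\delta_{C_2}$ (resp.\ $\delta_{l_{23}}$), which is bounded below by the Abban--Zhuang estimate along the flag $E_S\supset F\supset C$ (resp.\ $E_W\supset E_l\supset E_C\supset \tC$) using the $S$-invariants computed in Sections \ref{Section: quintic Del Pezzo fourfolds} and \ref{Section: quintic Del Pezzo fivefolds}; this yields exactly $\min\{1,\frac{9}{5},\frac{7}{5}\}=1$.

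Second, you have misidentified where the difficulty in the polystability step lies. Identifying the central fiber of the $E_S$- (resp.\ $E_W$-) degeneration is not an obstacle at all: $\ord_{E_S}$ is the toric valuation $\wt_\xi$ for $\xi=-\frac{1}{5}(\eta+\zeta)$ in the cocharacter lattice of the torus $\IT\subset\Aut(M_4,Q_{3,0})$, and $Q_{3,0}$ is $\IT$-invariant, so the induced test configuration is a product test configuration by construction; no deformation-to-the-normal-cone computation or matching of flat limits is needed. The actual content, which your proposal compresses into ``should amount to essentially the $E_S$ degeneration,'' is ruling out \emph{every other} $G$-equivariant special test configuration with vanishing Futaki invariant. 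The paper does this by decomposing any $G$-invariant valuation with $\beta=0$ as $v_{0,\xi}$ with $v_0$ a $G$-invariant valuation on $E_S$ (using $\Val^{\IT,\circ}_M\cong\Val^{\circ}_{E_S}\times N_\IR$ and the identification $R_{\bu\bu}\cong W^{E_S}_{\bu\bu}$), and then invoking the \emph{strict} inequalities $\frac{A}{S}(F),\frac{A}{S}(C)>\frac{25}{9}$ to force $v_0$ to be trivial, i.e.\ $v=\wt_\xi$. Without some version of this argument your outline does not establish polystability.
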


With a similar strategy, we may consider the weighted K-stability instead of the K-stability of log Fano pairs. For the soliton candidates $\xi_0$ on $M_4$ and $\eta_0$ on $M_5$ (see Section~\ref{Section: solitons on DP4 and DP5}), we have  the following.  

\begin{thm}
\label{Theorem: Intro, weighted K-polystability}
The Fano pairs $(M_4, \xi_0)$ and $(M_5, \eta_0)$ are weighted K-polystable. 
\end{thm}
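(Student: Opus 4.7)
The plan is to apply the equivariant weighted stability threshold criterion: for a $\bQ$-Fano variety $X$ equipped with a torus $T\subseteq \Aut(X)$ and a vector $\xi$ in the Lie algebra of $T$, $(X,\xi)$ is weighted K-polystable if and only if $A_X(E)\geq S^g(E)$ holds for every $T$-invariant prime divisor $E$ over $X$, with equality only when the degeneration induced by $E$ is a product test configuration (equivalently, $E$ is induced by a one-parameter subgroup of $T$). Here $g=e^{\langle\xi,\cdot\rangle}$ is the weight and $S^g$ denotes the weighted expected vanishing order obtained by integrating the weighted volume $\vol^g$. Since both soliton candidates $\xi_0,\eta_0$ are constructed in Section \ref{Section: solitons on DP4 and DP5} as elements of the Lie algebra of a maximal torus $T\subset\Aut(M_n)$ (identified in Section \ref{Subsection: Aut of M_5}), the equivariant reduction applies directly.

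The execution proceeds in three steps. First, I enumerate the $T$-invariant prime divisors over $M_n$: there are only finitely many such divisors on $M_n$ itself, while any $T$-invariant divisorial valuation is extracted from a $T$-fixed subvariety via an equivariant weighted blowup, making the relevant list combinatorially explicit. Second, for each $T$-invariant $E$, I compute $\vol^g(-K_{M_n}-tE)$ by adapting the intersection-theoretic calculations underlying Theorems \ref{Theorem: Intro, delta of DP4} and \ref{Theorem: Intro, delta of DP5} to the weighted setting, and integrate in $t$ to obtain $S^g(E)$. Third, I verify $A_{M_n}(E)\geq S^g(E)$.

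The key design feature of $\xi_0$ and $\eta_0$ is that they are calibrated so that $A_{M_n}(E)=S^g(E)$ holds for precisely those divisors that destabilized the unweighted $\delta$-invariant, namely $E_S$ on $M_4$ and $E_W$ on $M_5$. Both $E_S$ and $E_W$ are $T$-invariant because $S\subset M_4$ and $W\subset M_5$ are stabilized by $T$, and in fact they arise from one-parameter subgroups inside $T$, so the equality cases correspond exactly to product test configurations, as required for polystability. All other $T$-invariant divisors are handled by showing strict inequality $A>S^g$.

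The main obstacle is the weighted volume computation on blowups of $M_n$. I would approach it by realizing $M_n$ as a $T$-almost-toric Fano and describing $\vol^g$ in terms of a Duistermaat-Heckman measure on a Newton-Okounkov body associated to a $T$-invariant flag; the weight $e^{\langle\xi_0,\cdot\rangle}$ then integrates explicitly against this piecewise-polynomial measure. The delicate part is ruling out destabilizing $T$-invariant valuations outside the expected equality locus, which requires combining the weighted analogues of the estimates from Sections \ref{Subsection: Properties of Del Pezzo fourfolds} and \ref{Subsection: Aut of M_5} with a careful case analysis of the $T$-orbit structure on $M_n$.
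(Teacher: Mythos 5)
Your overall framework (weighted K-stability via comparing $A$ with the weighted expected vanishing order $S^g$, equivariant reduction, soliton candidates minimizing the $\BH$-functional, and equality occurring exactly at $E_S$ resp.\ $E_W$) matches the paper's setup, but the central executable step of your plan does not work as stated. You propose to \emph{enumerate} the $T$-invariant prime divisors over $M_n$ and check $A\geq S^g$ one by one, asserting that the list is ``combinatorially explicit.'' The torus here has dimension $2$ on the fourfold $M_4$ and dimension at most $3$ on the fivefold $M_5$, so the action has complexity $2$: $M_n$ is not toric (nor ``almost toric'' of complexity one) under $T$, and the set of $T$-invariant divisorial valuations is not finite and admits no combinatorial enumeration — for instance, every subvariety of the two-dimensional quotient gives rise to families of $T$-invariant valuations. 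Consequently ``all other $T$-invariant divisors are handled by showing strict inequality'' has no mechanism behind it; this is precisely the infinitude that any proof must tame. The paper does so by working with the larger \emph{non-abelian} reductive group $G=\Aut(M,Q_0)$ (a $\GL_2$, resp.\ $\IT\times\SL_2$), reducing to valuations centered along the unique minimal $G$-orbit ($C_2$, resp.\ $l_{23}$), and then applying the Abban--Zhuang estimate along an explicit flag of successive refinements ($E_S\supset F\supset C$ for $M_4$; $E_W\supset E_l\supset E_C\supset \tC$ for $M_5$). This reduces the infimum over all such valuations to a minimum of finitely many weighted integrals, each computed from the intersection numbers already established in Sections 3 and 4. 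Without either this flag argument or an equivalent device, your step 2--3 cannot be completed.

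A second, smaller gap concerns polystability. Even granting semistability, the statement ``equality only when $E$ is induced by a one-parameter subgroup of $T$'' is not something you can verify divisor-by-divisor: a priori the weighted threshold could be computed by a quasi-monomial $G$-invariant valuation that is not divisorial. The paper handles this by decomposing $\Val^{\IT,\circ}_M\cong \Val^{\circ}_{E_S}\times N_\IR$ (using that $E_S$, resp.\ $E_l$, is a toric divisor for the relevant $\IG_m$), writing any $G$-invariant minimizer as $v_{0,\xi}$, and using the strict inequalities at the deeper levels of the refinement flag together with the reduced delta invariant to derive a contradiction. Your proposal would need to incorporate an argument of this type to conclude polystability rather than only semistability.
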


Hence by \cite{HL23,BLXZ23}, we get the existence of K\"ahler--Ricci solitons on quintic Del Pezzo manifolds,  which answers a question asked in \cite{Ino19,Del22}. 

\begin{cor}
\label{Corollary: Intro, solitons of DP4 and DP5}
Any quintic Del Pezzo fourfold or fivefold admits a K\"ahler--Ricci soliton. 
\end{cor}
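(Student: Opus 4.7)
The plan is to combine Theorem~\ref{Theorem: Intro, weighted K-polystability} with the Yau-Tian-Donaldson correspondence for K\"ahler-Ricci solitons, using the classical fact that quintic Del Pezzo manifolds are unique in each dimension. I would first invoke the Fujita-Iskovskikh classification (see, e.g., \cite{Fuj90}): up to isomorphism there is a unique smooth quintic Del Pezzo $n$-fold for $n=4,5$, realized as a transverse linear section of $\Gr(2,5)\subset\IP^9$. Consequently, every quintic Del Pezzo fourfold (resp.\ fivefold) is isomorphic to the manifold $M_4$ (resp.\ $M_5$) fixed in Section~\ref{Section: solitons on DP4 and DP5}, so it suffices to produce a K\"ahler-Ricci soliton on these specific models.

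Next, I would recall the setup from Section~\ref{Section: solitons on DP4 and DP5}: the vector fields $\xi_0\in\aut(M_4)$ and $\eta_0\in\aut(M_5)$ are soliton candidates in the sense that each lies in a maximal torus of the automorphism group and is characterized (uniquely inside that torus) by the vanishing of the associated weighted/modified Futaki invariant. This is precisely the input required to phrase the existence of a K\"ahler-Ricci soliton as a weighted K-stability statement. By Theorem~\ref{Theorem: Intro, weighted K-polystability}, the Fano pairs $(M_4,\xi_0)$ and $(M_5,\eta_0)$ are weighted K-polystable.

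Finally, I would apply the weighted Yau-Tian-Donaldson correspondence: \cite{HL23} establishes the weighted K-stable direction of existence of K\"ahler-Ricci solitons, and \cite{BLXZ23} extends this to the polystable case, so that a smooth Fano manifold equipped with a soliton candidate vector field admits a K\"ahler-Ricci soliton whenever the corresponding weighted pair is K-polystable. Applying this to $(M_4,\xi_0)$ and $(M_5,\eta_0)$ produces the desired solitons, and the uniqueness statement of the first paragraph transports them to every quintic Del Pezzo fourfold or fivefold. All the substantive content sits in Theorem~\ref{Theorem: Intro, weighted K-polystability}; the only genuine step here is checking that $\xi_0,\eta_0$ indeed satisfy the hypotheses of the weighted YTD theorem (i.e.\ that they are the correct soliton candidates, not merely arbitrary torus elements), which will have been verified in Section~\ref{Section: solitons on DP4 and DP5} when producing the candidates.
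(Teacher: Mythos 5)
Your proposal is correct and follows essentially the same route as the paper: the corollary is obtained by combining the weighted K-polystability of $(M_4,\xi_0)$ and $(M_5,\eta_0)$ (Theorems \ref{Theorem: M_4 soliton} and \ref{Theorem: M_5 soliton}) with the weighted Yau-Tian-Donaldson correspondence of \cite{HL23,BLXZ23} and the uniqueness of quintic Del Pezzo manifolds due to T. Fujita. The only cosmetic difference is that the paper defines the soliton candidates as minimizers of the $\BH$-functional rather than via vanishing of the modified Futaki invariant, but these characterizations agree.
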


The paper is organized as follows. In Section~\ref{Section: Preliminaries}, we recall the definitions of  quintic Del Pezzo manifolds and  GM manifolds. Next, we give a detailed description of the quintic Del Pezzo fourfolds and fivefolds, and compute their delta invariants in Sections~\ref{Section: quintic Del Pezzo fourfolds} and~\ref{Section: quintic Del Pezzo fivefolds}, respectively. We study the wall-crossing problem of the K-moduli spaces of $(M_4,cQ_3)$ and $(M_5,cQ_4)$ in Section~\ref{Section: first and last walls}. With a similar strategy, we show that $M_4$ and $M_5$ both admit K\"ahler--Ricci solitons in Section~\ref{Section: solitons on DP4 and DP5}. Finally, we prove the main theorem in Section~\ref{Section: K-stability of general special GM}, that is, general special GM manifolds are K-stable. 

\subsection*{Acknowledgments} We would like to thank Minghao Miao, Lu Qi, Fei Si and Shengxuan Zhou for helpful discussions. We also thank the anonymous referees for their careful reading and constructive comments.  The second author would like to thank Gang Tian for his support and guidance.

\section{Preliminaries}
\label{Section: Preliminaries}

Throughout this paper, we work over the field of complex numbers $\bC$. 

\subsection{Gushel--Mukai manifolds and quintic Del Pezzo manifolds} We recall the definitions of GM manifolds and quintic Del Pezzo manifolds in this subsection. 

For any $\IC$-vector space $W$, the projectivization $\IP(W)$ for $W$ parameterizes the $1$-dimensional subspaces of $W$. 
Let $V_5$ be a $\IC$-vector space of dimension $5$. Let $\Gr=\Gr(2, V_5)\seq \IP(\wedge^2V_5)=:\IP$ be the Grassmannian of $2$-dimensional subspaces in $V_5$, and $\CGr\seq \IP(\IC\oplus \wedge^2 V_5)=:\tilde{\IP}$ be the projective cone over $\Gr$ with vertex $v=\IP(\IC)$. We have the natural projection map $\pi: \tilde{\IP}\setminus \{v\} \to \IP$, which restricts to $\pi_\Gr\colon\CGr\setminus \{v\} \to \Gr$. For any $3\le n\le 6$, we define 
\begin{align*}
X_n &= \CGr \cap H_{6-n} \cap Q, \\
M_n &= \Gr \cap H'_{6-n},  \\
Q_{n-1} &= \Gr \cap H'_{6-n} \cap Q', 
\end{align*}
where $H_{6-n}\seq \tilde{\IP}$, $H'_{6-n} \seq \IP$ are linear subspaces of codimension $6-n$, and $Q\seq \tilde{\IP}$, $Q' \seq \IP$ are quadric hypersurfaces. 

It is not difficult to show that $X_n$ is a Fano manifold of dimension $n$ with $\Pic(X_n) = \la H \ra$, where $H= \CO_{\tilde{\IP}}(1)|_{X_n}$. We also have $(-K_{X_n})=(n-2) H$ and $H^n = 10$. 
On the other hand, we see that $M_n$ is a Fano manifold of dimension $n$ with $\Pic(M_n)=\la H \ra$, where $H=\CO_{\IP}(1)|_{M_n}$. We also have $(-K_{M_n})=(n-1)H$ and $H^n = 5$. 
We have the following classification results due to Mukai \cite{Muk89} and T.~Fujita \cite{Fuj81}, respectively. 

\begin{defithm}[\textit{cf.} \cite{Muk89}]
Let $X_n$ be a Fano manifold of dimension $n$ with Picard number~$1$,  index $n-2$ and degree $10$. Then $n\in\{3,4,5,6\}$, and there exist a codimension $(6-n)$ subspace $H_{6-n}\seq \tilde{\IP}$ and a quadric hypersurface $Q\seq \tilde{\IP}$ such that 
\begin{eqnarray*}
X_n = \CGr \cap H_{6-n} \cap Q. 
\end{eqnarray*}
The manifolds $X_n$ are called \emph{Gushel--Mukai $($GM\,$)$ manifolds}. There are two different types of $X_n$:  
\begin{enumerate}
\item If $v\in H_{6-n}$, then we have $H'_{6-n}=H_{6-n}\cap \IP \seq \tilde{\IP}$, and $\pi$ induces a double cover $X_n\to M_n$ branched along some $Q_{n-1}\seq H'_{6-n}$. We say that $X_n$ is  a \emph{special Gushel--Mukai manifold}.
    \item If $v \notin H_{6-n}$, then $n\le 5$ and $\pi$ maps $H_{6-n}\seq \tilde{\IP}$ isomorphically to $H'_{5-n}=\pi(H_{6-n})\seq \IP$, hence maps $X_n$ isomorphically to $Q_n=\pi(X_n)\seq H'_{5-n}$. We say that $X_n$ is an \emph{ordinary Gushel--Mukai manifold}.
\end{enumerate}
Note that if $n=6$, then every Gushel--Mukai manifold is special.   
\end{defithm}

\begin{defithm}[\textit{cf.} \cite{Fuj81}]
Let $M_n$ be a Fano manifold of dimension $n$ with Picard number~$1$, index $n-1$ and degree $5$. Then $M_n$ is a linear section of $\Gr(2, V_5)\seq \IP(\wedge^2V_5)$. 
The manifolds $M_n$ are called \emph{quintic Del Pezzo manifolds}. 
\end{defithm}

\begin{rmk} \label{Remark. structure of GM}\leavevmode
\begin{enumerate}
    \item\label{RsGM-1} If $n =1$ or $2$, then the above definition extends verbatim to  smooth GM curves and GM surfaces. By \cite{DK18}, a smooth GM curve is precisely a Clifford general smooth curve of genus $6$, and a smooth GM surface is precisely a Brill--Noether general smooth polarized K3 surface of degree $10$.
    \item\label{RsGM-2} By the discussions above, we know that for any special GM $n$-fold $X_n$, there exists a double cover $\pi\colon  X_n\to M_n$ branched along an ordinary GM $(n-1)$-fold $Q_{n-1}$. 
On the other hand, the parameter space of all the GM $n$-folds is irreducible and contains the parameter space of the special GM $n$-folds as an irreducible subvariety. 
\end{enumerate}
\end{rmk}

By \cite{Fuj81}, we know that the quintic Del Pezzo manifolds are unique. 

\begin{thm}[\textit{cf.}~\cite{Fuj81}]
For any $2\le n\le 6$, all the quintic Del Pezzo $n$-folds are isomorphic to each other. 
\end{thm}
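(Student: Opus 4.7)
The plan is to use the embedding $M_n \hookrightarrow \Gr(2, V_5) \subseteq \IP(\wedge^2 V_5) = \IP^9$ and prove the stronger statement that $PGL(V_5) = PGL_5$ acts transitively on the set of smooth codim-$(6-n)$ linear sections. Such a section is determined by a $(6-n)$-dimensional subspace $W \subseteq (\wedge^2 V_5)^*$ of alternating $2$-forms on $V_5$, so the parameter space is an open subvariety of $\Gr(6-n,\, 10)$, and $PGL_5$ acts on it through its action on $V_5$. I would proceed by downward induction on $n$, starting from the trivial case $n=6$ where $M_6 = \Gr(2,V_5)$ itself.

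For the case $n=5$, a hyperplane section corresponds to a single nonzero skew form $\omega \in (\wedge^2 V_5)^*$ up to scalar. A standard tangent-space computation shows that $\Gr \cap \{\omega = 0\}$ is smooth if and only if $\omega$ has maximal rank $4$ (the rank must be even, and rank $\leq 2$ would force singularities along the sub-Grassmannian of $2$-planes contained in $\ker \omega$). By the classical normal form theorem for alternating forms in odd dimension, all rank-$4$ skew forms on $V_5$ lie in a single $GL(V_5)$-orbit, giving uniqueness of $M_5$.

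For $n \le 4$, the inductive step has two components. First, one shows that every smooth $M_n$ arises as a smooth hyperplane section of some smooth $M_{n+1}$: given a defining subspace $W \subseteq (\wedge^2 V_5)^*$ of dimension $6-n$, one extends it by a generic hyperplane in $W$ (equivalently, drops one defining form) and applies Bertini to the ambient linear system on $M_{n+1}$. Second, one shows that $\Aut(M_{n+1})^\circ$ acts transitively on the locus of smooth hyperplane sections inside $M_{n+1}$. Since $H = \CO(1)|_{M_{n+1}}$ generates $\Pic(M_{n+1})$, each hyperplane section corresponds to a point of $\IP H^0(M_{n+1}, \CO(H))$, and the question reduces to a $PGL_5$-orbit analysis of $(6-n)$-dimensional subspaces of $(\wedge^2 V_5)^*$ modulo the subspace that defines $M_{n+1}$.

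The main obstacle will be the orbit classification for $n = 3$ and $n = 2$, which requires identifying the unique open $PGL_5$-orbit on Grassmannians of pencils, nets, and webs of alternating forms on $V_5$. This can be handled by analyzing the degeneracy stratification given by ranks of the forms in $W$ and showing that the generic stratum is a single orbit of the expected dimension, using a dimension count against $\dim PGL_5 = 24$ and the known dimensions $\dim M_n = n$, $\dim \Aut(M_n)$ (e.g.\ $PGL_2$ for $n = 5$, and progressively larger for smaller $n$). The conclusion then follows by recognizing the numerical balance $\dim \Gr(6-n, 10) - \dim PGL_5 + \dim \Aut(M_n) = 0$ in each case, confirming a single open orbit and hence the uniqueness of $M_n$ up to isomorphism.
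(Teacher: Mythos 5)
The paper does not actually prove this statement; it is quoted from Fujita \cite{Fuj81}, so there is no in-paper argument to compare against. Your outline follows the classical route (transitivity of $\PGL(V_5)$ on smooth linear sections, via normal forms for systems of skew $2$-forms on $V_5$), which is indeed how the result is established in the literature, but as written it has a genuine gap and a couple of factual slips.

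The gap is precisely the step you defer: for $n=4,3,2$ one must show that \emph{every} subspace $W\seq(\wedge^2V_5)^*$ cutting out a smooth section lies in a single $\PGL_5$-orbit. Your dimension count at best shows that an open orbit exists; since the smooth locus in $\Gr(6-n,10)$ is open and irreducible, an open orbit is automatically dense in it, but this does not exclude smooth sections sitting in smaller, non-open orbits. (For $n=5$ you rule this out by the explicit rank argument; the analogue for pencils requires Kronecker's classification of pencils of skew forms, and nets and webs require a further case analysis showing that every degenerate stratum of the rank stratification yields a singular section. That analysis is the actual content of Fujita's, and for $n=3$ Iskovskikh's, proof.) The dimension count is also mildly circular, since identifying the generic stabilizer with $\Aut(M_n)$ presupposes knowing what the generic section is. Separately, the extension step --- that a smooth $M_n$ is a hyperplane section of a \emph{smooth} $M_{n+1}$ for a generic choice of dropped form --- needs a Bertini-type argument on an incidence variety rather than plain Bertini, since the potential singularities of $M_{n+1}$ lie away from $M_n$. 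Finally, two corrections to your parenthetical: $\Aut(M_5)$ is $15$-dimensional (it is $\Aut(M_3)\cong\PGL_2(\IC)$ that is $3$-dimensional), and the automorphism groups \emph{shrink} as $n$ decreases, down to the finite group $S_5$ for the quintic del Pezzo surface; with the correct values $\dim\Aut(M_n)=15,8,3,0$ for $n=5,4,3,2$ your balance $\dim\Gr(6-n,10)-24+\dim\Aut(M_n)=0$ does hold.
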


The quintic Del Pezzo fourfolds and fivefolds are among the first examples of K-unstable Fano manifolds with Picard number~$1$ found by K.~Fujita \cite{Fuj17}. 

\begin{thm}[\textit{cf.}~\cite{Fuj17}]
The quintic Del Pezzo fourfolds and fivefolds are K-unstable. 
\end{thm}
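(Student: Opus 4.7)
The plan is to invoke Fujita's valuative criterion: a Fano manifold $X$ fails to be K-semistable as soon as there exists a prime divisor $E$ over $X$ with
$$\beta_X(E) := A_X(E) - S_X(E) < 0,$$
so it suffices to exhibit one such divisorial valuation on $M_4$ and on $M_5$. As the excerpt suggests, I would look for a common witness, since $M_4$ is a hyperplane section of $M_5$: find a $2$-plane $S \cong \IP^2 \seq M_4 \seq M_5$ and test the blowup exceptional divisor $E_S$ on both sides.

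First I would produce and classify the plane $S$. Using the embedding $M_n \seq \Gr(2, V_5) \seq \IP(\wedge^2 V_5)$, the $2$-planes in $M_n$ fall into a small number of $\Aut(M_n)$-orbits: the "vertex type" planes come from sub-Grassmannians $\Gr(2, V_3)$ for $V_3 \seq V_5$, and the "non-vertex type" planes come from a different Schubert construction. I would exhibit an explicit non-vertex plane $S$ in coordinates, verify $S \seq M_4$ directly from the defining equations, and observe that it is preserved when passing to $M_5 \supset M_4$. The log discrepancy is then immediate: since $S$ is smooth of codimension $n-2$, one has $A_{M_n}(E_S) = n - 2$, i.e.\ $A = 2$ for $M_4$ and $A = 3$ for $M_5$.

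The crux is computing
$$S_{M_n}(E_S) = \frac{1}{(-K_{M_n})^n} \int_0^{\tau} \vol\bigl(\sigma^*(-K_{M_n}) - t\, E_S\bigr)\, dt,$$
where $\sigma: Y \to M_n$ is the blowup along $S$ and $\tau$ is the pseudoeffective threshold. For this I would determine the Zariski decomposition of $\sigma^* H - s E_S$ as $s$ increases from $0$ to its pseudoeffective bound, identifying the walls where new extremal rays appear. These walls correspond to proper transforms of special subvarieties incident to $S$ (linear spaces and quadric surfaces through $S$, scrolls over lines in $S$, etc.), which for a non-vertex plane in $M_n$ can be enumerated using the Grassmannian geometry. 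On each chamber the volume is a polynomial in $s$ which is computed by intersection numbers on the appropriate small modification of $Y$; integrating piece by piece yields $S_{M_n}(E_S)$, and comparison with $A_{M_n}(E_S) = n-2$ should give $\beta(E_S) < 0$ in both cases.

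The main obstacle is Step~3: carrying out the full Zariski decomposition, especially for $M_5$ where the geometry around a non-vertex plane is richer and more walls may occur. To streamline the computation I would exploit the large automorphism group stabilizing $S$ and the homogeneity of $\Gr(2,V_5)$, which constrains the subvarieties through $S$ and hence the possible walls. Once the volume function is in hand, the two numerical inequalities $S_{M_4}(E_S) > 2$ and $S_{M_5}(E_S) > 3$ fall out of a direct integral, completing the proof.
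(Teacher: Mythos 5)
Your proposal is correct and takes essentially the same route as the proof the paper relies on (the paper only cites \cite{Fuj17} for this statement): exhibit the non-vertex-type plane $S\subseteq M_4\subseteq M_5$, note $A_{M_4}(E_S)=2$ and $A_{M_5}(E_S)=3$, and compute $S(E_S)$ from the Zariski decomposition of $\sigma^*H-tE_S$ on the blowup, which for $M_4$ the paper realizes explicitly via the Sarkisov link $\tilde M\to\IP^4$ with fixed part $(u-1)\tilde D_9$ for $1\le u\le 2$. The resulting ratios $A/S=\tfrac{25}{27}$ for $M_4$ and $\tfrac{45}{46}$ for $M_5$ confirm $\beta(E_S)<0$ in both cases, exactly as you predict.
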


On the other hand, by \cite{Tia90} and \cite{CS09}, we know that the quintic Del Pezzo surface $M_2$ and threefold $M_3$ are K-polystable. Since $M_6=\Gr(2,V_5)$ is a homogeneous space, it is K-polystable; see for example \cite[Theorems 1.2.5 and 1.4.7]{ACC+}. 

The following result summarizes fundamental properties of the moduli spaces of smooth GM manifolds.

\begin{thm}[\textit{cf.} \cite{DK20, Deb20}]
For any $3\leq n\leq 6$, there exists a quasi-projective irreducible coarse moduli space $\mathbf{M}_n^{\GM}$ parameterizing smooth GM $n$-folds. Moreover, $\dim \mathbf{M}_n^{\GM} = 25 - \frac{1}{2}(6-n)(5-n)$. The moduli space $\mathbf{M}_n^{\GM}$ is the union of an open subvariety $\mathbf{M}_{n, \mathrm{ord}}^{\GM}$ and an irreducible closed subvariety $\mathbf{M}_{n, \mathrm{spe}}^{\GM}$, complement to each other, that parameterize ordinary and special GM $n$-folds, respectively. Moreover, the following properties hold: 
\begin{enumerate}\setlength{\itemsep}{3pt}
    \item $\mathbf{M}_{n, \mathrm{ord}}^{\GM}=\emptyset$  if and only if $n = 6$.
    \item $\mathbf{M}_{n, \mathrm{spe}}^{\GM}\cong \mathbf{M}_{n-1, \mathrm{ord}}^{\GM}$ for $4\leq n\leq 6$.
    \item $\mathbf{M}_{3, \mathrm{spe}}^{\GM}$ is isomorphic to the moduli space $\mathbf{M}_{2, \mathrm{ord,ss}}^{\GM}$ parameterizing strongly smooth GM surfaces; see \cite[Definition 3.15]{DK18}.
\end{enumerate}
\end{thm}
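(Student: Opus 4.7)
The plan is to realize $\mathbf{M}_n^{\GM}$ as a geometric quotient of an irreducible parameter space of pairs $(H_{6-n},Q)$ by the natural action of a $25$-dimensional group, and then to read off the stratification by tracking whether $v\in H_{6-n}$. I would fix the vector space $V_5$ and let
\[
\CP_n \;\subset\; \Gr\bigl(n+5,\;\IC\oplus \wedge^2 V_5\bigr)\times \bigl|\CO_{\tilde{\IP}}(2)\bigr|
\]
denote the locally closed subvariety of pairs $(H_{6-n},Q)$ for which $X_n=\CGr\cap H_{6-n}\cap Q$ is smooth of dimension $n$ and $v\notin Q$. Smoothness is an open condition, both factors are irreducible, and any classical example shows $\CP_n\neq \emptyset$, so $\CP_n$ is irreducible. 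The group $G:=(\GL(V_5)\times \GL_1)/\IC^\times$ of dimension $25$ acts on $\CP_n$ through its faithful action on $\IC\oplus \wedge^2 V_5$; Mukai's reconstruction \cite{Muk89} then implies that two pairs yield isomorphic GM manifolds precisely when they differ by this $G$-action combined with translating $Q$ by a quadric in the Pl\"ucker ideal of $\CGr\cap H_{6-n}$. A standard GIT/good moduli space argument produces the quasi-projective irreducible coarse moduli space $\mathbf{M}_n^{\GM}$.

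The dimension formula would follow from a direct count: add $\dim\Gr(n+5,11)=(n+5)(6-n)$ to the dimension of quadrics on $H_{6-n}$ modulo the Pl\"ucker relations, and subtract $\dim G=25$; the arithmetic collapses to $25-\tfrac{(6-n)(5-n)}{2}$. Next, the condition $v\in H_{6-n}$ is closed and irreducible on the Grassmannian factor, so its image in $\mathbf{M}_n^{\GM}$ is the closed irreducible subvariety $\mathbf{M}_{n,\mathrm{spe}}^{\GM}$, while its open complement is $\mathbf{M}_{n,\mathrm{ord}}^{\GM}$. Statement (1) is then immediate: for $n=6$ we have $H_0=\tilde{\IP}\ni v$ automatically, whereas for $n\le 5$ a generic $H_{6-n}$ avoids $v$.

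For (2), the canonical involution on a special GM $n$-fold has quotient the (unique, by the Fujita theorem recalled above) quintic Del Pezzo $n$-fold $M_n$ and branch locus an ordinary GM $(n-1)$-fold $Q_{n-1}\subset M_n$, so $X_n\mapsto Q_{n-1}$ descends to a morphism $\mathbf{M}_{n,\mathrm{spe}}^{\GM}\to \mathbf{M}_{n-1,\mathrm{ord}}^{\GM}$ whose inverse reconstructs $X_n$ as the double cover of $M_n$ branched along $Q_{n-1}$, giving the isomorphism for $4\le n\le 6$. Statement (3) applies the same double-cover recipe at $n=3$, but now one must impose \emph{strong smoothness} on the GM surface $Q_2$: this will be the main obstacle, because a smooth $Q_2\subset M_3$ can still produce a singular threefold $X_3$ when the associated Pl\"ucker data degenerates, so one must identify the open condition on branch divisors with the Brill-Noether genericity of \cite[Definition 3.15]{DK18}. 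This reconciliation is the technical heart of the argument and is where I expect to rely most heavily on the detailed GM period theory developed in \cite{DK20}.
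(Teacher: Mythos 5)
First, note that the paper does not prove this statement at all: it is quoted verbatim from \cite{DK20, Deb20} as background, so there is no internal proof to compare against. Judged on its own terms, your sketch follows the natural strategy (parameter space of pairs $(H_{6-n},Q)$ modulo a group), but it has a concrete error that breaks both the equivalence relation and the dimension count. The group $G=(\GL(V_5)\times\GL_1)/\IC^\times$, acting block-diagonally on $\IC\oplus\wedge^2V_5$, is only the Levi part of the group of linear automorphisms of $\tilde{\IP}$ preserving $\CGr$; you are missing the $10$-dimensional unipotent radical $\Hom(\wedge^2V_5,\IC)$, i.e. the maps $(t,\omega)\mapsto(\lambda t+\phi(\omega),\omega)$, which preserve every ruling of the cone and hence $\CGr$. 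Two pairs related by such a translation give projectively equivalent GM manifolds but are \emph{not} identified by your relation (the translated quadric does not differ from the original by an element of the Pl\"ucker ideal), so your quotient is too large. The arithmetic confirms this: for $n=6$ one has $h^0(\CGr,\CO(2))=61$, so quadrics modulo the ideal form a $\IP^{60}$, and $60-\dim G=35\neq 25$; only after subtracting the full $35$-dimensional automorphism group of $\CGr$ does one recover $25$. For $n\le 5$ the same phenomenon appears as the redundancy in the choice of the lift $H_{6-n}$ of $H'_{5-n}$.

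Two further points. Once the unipotent part is included the group is non-reductive, so "a standard GIT/good moduli space argument" is not available off the shelf; constructing the quasi-projective coarse space is precisely the hard content of \cite{DK20} (they first quotient by the unipotent part via normal forms of the quadric, then apply GIT for the residual reductive action), and the input that abstract isomorphisms of GM $n$-folds ($n\ge 3$) are induced by linear automorphisms of the ambient space is itself a nontrivial theorem of Debarre--Kuznetsov rather than an immediate consequence of \cite{Muk89}. Finally, in item (3) your diagnosis is slightly off: if $Q_2$ is a smooth divisor in a \emph{smooth} $M_3$ the double cover is automatically smooth; the actual issue is that a smooth GM surface may have a singular Grassmannian hull $M_3=\Gr\cap H'_3$ (smoothness of a GM variety forces smoothness of its hull only in dimension $\ge 3$), in which case the double cover acquires singularities over $\mathrm{Sing}(M_3)\setminus Q_2$ --- this is exactly what the strong smoothness condition of \cite[Definition 3.15]{DK18} excludes.
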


\begin{lem} 
\label{Lemma. Aut(X_n) to Aut(M_n,Q_n-1) ker = Z/2 } 
Let $X_n\to M_n$ be the double cover in Remark~\ref{Remark. structure of GM}\,\eqref{RsGM-2}, and let $Q_{n-1}\seq M_n$ be the branched divisor. Then there exists a morphism $\Aut(X_n)\to \Aut(M_n,Q_{n-1})$ with kernel $\IZ/2\IZ. $
\end{lem}

\begin{proof}
  Let $L$ be the ample generator of $\Pic(X_n)\cong \IZ$. Then the morphism $\phi_{|L|}\colon  X_n \to |L|$ has image $M_n$ and is branched along some $Q_{n-1}\seq M_n$. For any $g\in \Aut(X_n)$, $g^*L$ is also the ample generator of $\Pic(X_n)$, that is, $g^*L\cong L$. It induces an action $\tilde{g}\colon |L|\to |L|$ preserving $M_n$ and $Q_{n-1}$. If $\tilde{g}=\id$, then $g$ is either $\id$ or an involution. We get a homomorphism $\Aut(X_n)\to \Aut(M_n,Q_{n-1})$, whose kernel is $\IZ/2\IZ$. 
\end{proof}

\subsection{K-stability}

We recall the basic notions in K-stability theory in this subsection. 

Let $(X,\D)$ be a log Fano pair, $l_0\in \IN$ be a positive integer such that $l_0(K_X+\D)$ is Cartier, and $R=R(X,\D)=\oplus_{m\in l_0\IN} R_m$ be the anti-canonical ring of $(X,\D)$, where $R_m=H^0(X,-m(K_X+\D))$. By choosing an admissible flag over $X$ (see \cite[Equation~(1.1)]{LM09}), we get an {\it Okounkov body} $\BO=\BO(-(K_X+\D))$ of the ample $\IQ$-divisor $-(K_X+\D)$. 
A {\it filtration} $\CF$ on $R$ is a collection of subspaces $\CF^\lam R_m \subseteq R_m$ for each $\lam \in \IR$ and $m\ge l_0\IN$ such that
\begin{itemize}
\item ({\it Decreasing}) $\CF^\lam R_m \supseteq \CF^{\lam'}R_m $ for  $\lam \le \lam'$,  
\item ({\it Left-continuous}) $\CF^\lam R_m=\CF^{\lam-\epsilon}R_m$ for $0<\epsilon \ll 1$,  
\item ({\it Bounded}) $\CF^\lam R_m = R_m$ for $\lam \ll 0$ and $\CF^\lam R_m = 0$ for $\lam \gg 0$,  
\item ({\it Multiplicative}) $\CF^\lam R_m \cdot \CF^{\lam'}R_{m'} \subseteq \CF^{\lam+\lam'}R_{m+m'}$. 
\end{itemize}
The filtration $\CF$ is called {\it linearly bounded} if there is a constant $C>0$ such that $\CF^{mC}R_m=0$ for all $m$. 
For example, any valuation $v$ on $X$ defines a filtration $\CF_v$ on $R$, 
$$\CF_v^\lam R_m := \{s\in R_m\mid v(s)\ge \lam\}. $$
If the log discrepancy satisfies $A_{X,\D}(v)<+\infty$, then $\CF_v$ is linearly bounded; see \cite{JM12}. 

\begin{defi}[Concave transforms]
\label{Definition. concave transform}
For any linearly bounded filtration $\CF$ on $R$ and $t\in \IR$, we have a graded linear subseries $\CF^{(t)} R_\bu \seq R_\bu$ defined by $(\CF^{(t)} R)_{m}=\CF^{mt}R_{m}$ (see \cite[Section 2.4]{BJ17}), which induces a descending family of Okounkov bodies $\BO^{(t)} = \BO(\CF^{(t)} R_\bu)\seq \BO$. By \cite[Section 2.5]{BJ17}, we define the {\it concave transform} $G_\CF\colon \BO\to \IR$ of $\CF$ by
\begin{eqnarray*} 
G_\CF(y)=\sup\left\{t\in \IR\relmiddle | y\in\BO^{(t)}\right\}.
\end{eqnarray*}
If $y \in \text{int}(\BO)_\IQ = \text{int}(\BO)\cap \IQ^n$, then we have  
\begin{eqnarray*} 
G_\CF(y)=\sup\left\{\frac{\ord_\CF(s)}{m}\relmiddle | \frac{\nv(s)}{m} = \gamma, \exists m\in l_0\IN,\,\exists s \in R_{m} \right\}, 
\end{eqnarray*}
where $\nv\colon  K(X)\to \IZ^n$ is the faithful valuation in the definition of the Okounkov body $\BO$. 
\end{defi}

We define the {\it $S$-invariant} of $\CF$ by
\begin{eqnarray*} 
S(-(K_X+\D);\CF) = \frac{1}{\Bv}\int_{\BO}G d\rho, 
\end{eqnarray*}
where $\Bv=\vol(\BO)$ is the volume of the Okounkov body $\BO$ and $d\rho$ is the Lebesgue measure. 
Moreover, for any valuation $v$ on $X$, we define the {\it Fujita--Li invariant} $\FL(v)$ (or $\beta$-invariant $\beta(v)$) by 
\begin{eqnarray*} 
\FL(v) = A_{X,\D}(v) - S(-(K_X+\D);\CF_v). 
\end{eqnarray*}
We also define the {\it $\delta$-invariant} of $(X,\D)$ by 
\begin{eqnarray*} 
\delta(X,\D) = \mathop{\inf}_{v}\frac{A_{X,\D}(v)}{S(-(K_X+\D);\CF_v)}, 
\end{eqnarray*}
where the infimum runs over all the non-trivial valuations $v$ over $X$. 

\begin{thm}[Fujita--Li's valuative criterion]
A log Fano pair $(X,\D)$ is K-stable $($resp.\ K-semistable$)$ if and only if\, we have $\FL(v)>0$ $($resp.\ $\FL(v)\geq 0)$ for any valuation $v$ over $X$. 
\end{thm}

We now  consider a Fano pair $(X,\D)$ with a torus $\IT=\IG_m^r$-action. Then the Cartier divisors $-m(K_X+\D)$ admit canonical $\IT$-linearization. Hence the anti-canonical ring $R_\bu=\oplus_{m\in l_0\IN}R_m$ admits a canonical weight decomposition $R_m=\oplus_{\alpha \in M} R_{m,\alpha}$, where $M\cong\IZ^r$ is the weight lattice. This weight decomposition defines the following {\it moment polytope} $\BP\seq M_\IR$ of the $\IT$-action: 
\begin{eqnarray*}
\BP   = \overline{\bigcup_{m\in l_0 \IN} \frac{1}{m} \BP_m}, \quad
\BP_m = \{\alpha \in M\mid R_{m, \alpha} \ne 0 \}. 
\end{eqnarray*}
Let $N=M^\vee$ be the co-weight lattice. Then any $\xi\in N_\IR$ defines a {\it toric valuation} $\wt_\xi$ on $X$, 
\begin{eqnarray} 
\label{Eqnarray: valuation of product TC}
\wt_\xi(f) = \la\alpha,\xi\ra + mA_{X,\D}\left(\wt_\xi\right), 
\end{eqnarray}
for any $f\in R_{m,\alpha}$ (see for example \cite[Equation~(5)]{Wang24}). For any $t\in \IR$, we may define 
\begin{eqnarray*}
\BP^{(t)} = \BP^{(t)}(\wt_\xi) = \overline{\bigcup_{m\in l_0 \IN} \frac{1}{m} \BP_m^{mt}}, \quad
\BP_m^{mt} = \{\alpha \in M\mid R_{m, \alpha} \ne 0, \la\alpha,\xi\ra \ge mt \},  
\end{eqnarray*}
similarly to $\BO^{(t)}(\CF_{\wt_\xi})$. 
By \cite[Definition 2.21]{HL20}, we may choose a faithful valuation $\nv\colon  K(X)^\times\to \IZ^n$ (where $\IZ^n$ is equipped with some total order respecting addition) on $X$ that is adapted to the torus action; that is, $\nv(f)=(\alpha,\nv^{r+1}(f),\dots, \nv^n(f))$ for any $f\in R_{m,\alpha}$. This valuation $\nv$ defines an Okounkov body $\BO\seq \IR^n$ of $R_\bu$, whose projection to the first $r$-factors $p\colon \BO\to  \IR^r$ has image $\BP$. Moreover, we have the restriction $p\colon \BO^{(t)}\to \BP^{(t)}$ for any $t\in \IR$. Hence the concave transform \smash{$G_\xi=G_{\CF_{\wt_\xi}}$} descends to $G^\BP_\xi\colon  \BP\to \IR$ (that is, \smash{$G_\xi=G^\BP_\xi\circ p$}), where 
\begin{eqnarray} 
\label{Eqnarray: G^BP_xi}
G^\BP_\xi(\alpha) = \sup\left\{t\in\IR\relmiddle | \alpha \in \BP^{(t)}\right\} = \la\alpha, \xi\ra + A_{X,\D}\left(\wt_\xi\right) 
\end{eqnarray}
is a linear function on $\BP$. 

Since $X$ admits a $\IT=\IG_m^r$-action, there exist a projective variety $Z$ of dimension $n-r$ and a $\IT$-equivariant birational map $\pi\colon  X \dashrightarrow Z\times \IT$, where the $\IT$-action on $Z$ is trivial. The function field $K(X)$ of $X$ is the fractional field of $K(Z)[M] = \oplus_{\alpha \in M} K(Z) \cdot 1^\alpha$. For any valuation $\mu$ on $Z$ and $\xi \in N_\IR$, we define the $\IT$-invariant valuation $v_{\mu, \xi}$ on $X$ by 
$$v_{\mu, \xi}(f)
= \min_\alpha\{\mu(f_\alpha)+\la \alpha, \xi \ra\}$$
for any $f=\sum_\alpha f_\alpha \cdot 1^\alpha \in K(Z)[M]$. If $\mu$ is the trivial valuation on $Z$, then $v_{\mu,\xi} = \wt_\xi$.  By \cite[Lemma~4.2]{BHJ17},  we know that any $\IT$-invariant valuation over $X$ is obtained in this way, and we get a non-canonical isomorphism 
$$\Val^\IT_X \cong \Val_Z \times N_\IR. $$ 
For any $v = v_{\mu, \xi_0}\in \Val^\IT_X$ and $\xi \in N_\IR$, we define the {\it $\xi$-twist} of $v$ by $v_\xi:= v_{\mu,\xi_0+\xi}$. One may check that the definition is independent of the choice of the birational map $X\dashrightarrow Z\times \IT$. 

We may define the function $\theta_\xi$ on $\Val^\IT_X$ by
\begin{eqnarray} 
\label{Eqnarray: A(v_xi) = A(v)+theta_xi(v)}
A_{X,\D}(v_\xi) = A_{X,\D}(v) + \theta_\xi(v). 
\end{eqnarray}
By \cite[Proposition 3.12]{Li19}, we have 
$$\Fut(\xi) = \FL(v_\xi) -\FL(v). $$
Hence if $\Fut|_N=0$, then 
\begin{eqnarray} 
\label{Eqnarray: S(v_xi) = S(v)+theta_xi(v)}
S(v_\xi) = S(v) + \theta_\xi(v). 
\end{eqnarray}

We recall the lower-bound estimate of the delta invariants developed by \cite{AZ22}. As in \cite[Definition~2.6]{AZ22}, we consider the following two cases: 
\begin{enumerate}
\item
Let $(X,\D)$ be a projective klt pair, and $F$ be a plt-type divisor over $X$ with the associated plt-type blowup $\pi\colon Y\to X$; that is, $\pi\colon  Y\to X$ is a projective birational morphism such that $Y$ is normal, $-F$ is a $\pi$-ample $\IQ$-Cartier divisor, and $(Y, \D_Y +F)$ is plt, where $\D_Y=\pi_*^{-1}\D$. 

\item
Let $(X,\D)$ be a projective subklt pair, and $F\seq X$ be a prime divisor such that $\D=\D'+bF$ ($b\in\IQ_{<1}, F\nsubseteq \Supp(\D')$) and $(X,\D'+F)$ is plt. In this case, we set $\pi=\id\colon  Y=X$ and $\D_Y=\D'$.  
\end{enumerate}
Let $\D_F$ be the different of $\D_Y$ on $F$. Let $V_\bu$ be a multi-graded linear series on $X$,  and $W_\bu$ be the refinement of $V_\bu$ by $F$. For any closed subvariety $Z\seq X$, we define 
\begin{eqnarray}
\delta_Z(X,\D;V_\bu):= \inf_v
\frac{A_{X,\D}(v)}{S(V_\bu;v)}, 
\end{eqnarray}
where the infimum runs over all the valuations $v$ on $X$ whose center contains $Z$. 

\begin{thm}[\textit{cf.}~{\cite[Theorem 3.2]{AZ22}}] \label{Theorem: Abban-Zhuang estimate} We have 
\begin{eqnarray}
\delta_Z(X,\D;V_\bu)\ge 
\min\left\{
\frac{A_{X,\D}(F)}{S(V_\bu;F)}, 
\mathop{\inf}_{\pi(Z')=Z} \delta_{Z'}(F, \D_F; W_\bu)
\right\}.
\end{eqnarray} 
\end{thm}

The following lemma will be used in the proof of the K-polystability of Del Pezzo pairs $(M,cQ_0)$. 

\begin{lem}
\label{Lemma. twist of G-invariant valuations}
Let $X$ be a normal variety with an algebraic group $G$-action, and $\IT\seq G$ be a subtorus lying in the center of\, $G$. 
Assume that there exist a $(G/\IT)$-variety $Z$ and a $G$-equivariant dominant rational map $X\dashrightarrow Z$, whose general fibers are $\IT$-orbits in $X$ $($hence it induces an isomorphism $\Val_{X}^\IT \cong \Val_Z\times N_\IR$, where $N$ is the co-weight lattice of the $\IT$-action on $X)$. 
If $v\in\Val_X$ is $G$-invariant, then there exist a $(G/\IT)$-invariant valuation $\mu\in \Val_Z$ and a $\xi\in N_\IR$ such that $v=v_{\mu,\xi}$.  
Conversely, if $\mu\in \Val_Z$ is $(G/\IT)$-invariant, then $v_{\mu,\xi}\in \Val_X^{\IT}$ is $G$-invariant for any $\xi\in N_\IR$. 
\end{lem}

\begin{proof}
The first assertion follows directly from $\Val_{X}^\IT \cong \Val_Z\times N_\IR$. 

For the second assertion, let $M=N^\vee$ be the weight lattice of the $\IT$-action on $X$. Then $K(X)$ is the fractional field of $K(Z)[M]$, and $K(Z)\seq K(X)$ is the subfield of $\IT$-invariant rational functions. 
Since $\IT$ lies in the center of $G$, we have characters $\chi_\alpha\colon  G\to \IC^*$ for $\alpha\in M$ such that $g^*1^\alpha = \chi_\alpha(g) \cdot 1^\alpha$.  
Hence for any $v\in \Val_X$ and $g\in G$, we have 
\begin{eqnarray*}
(g v)(1^\alpha) = v(g^*1^\alpha) = v(\chi_\alpha(g)\cdot 1^\alpha) = v(1^\alpha). 
\end{eqnarray*}
On the other hand, for any $\IT$-invariant $f\in K(X)$, the pull-back $g^*f$ is also $\IT$-invariant for any $g\in G$. Hence $K(Z)\seq K(X)$ is $G$-invariant. 
We have 
\begin{align*}
(g v_{\mu,\xi})(f_\alpha\cdot 1^\alpha) 
&= v_{\mu,\xi}(g^*(f_\alpha\cdot 1^\alpha)) 
\,=\, v_{\mu,\xi}((g^*f_\alpha)\cdot 1^\alpha)  \\
&= \mu(g^*f_\alpha)+ \la\alpha,\xi\ra
\,=\,\mu(f_\alpha) + \la\alpha,\xi\ra
\,=\, v_{\mu,\xi}(f_\alpha\cdot 1^\alpha) 
\end{align*}
for any $g\in G$, $\alpha\in M$ and $f_\alpha \in K(Z)$, where the fourth equality follows from the $(G/\IT)$-invariance of $\mu$. Hence $v_{\mu,\xi}$ is $G$-invariant. 
\end{proof}

\section{The quintic Del Pezzo fourfolds} \label{Section: quintic Del Pezzo fourfolds}

We give an overview of basic facts of the quintic Del Pezzo fourfolds in this section. We first compute the automorphism group by viewing the quintic Del Pezzo fourfolds as linear sections of $\Gr(2,V_5)$ via the Pl\"ucker embedding. T.~Fujita classified the quintic Del Pezzo manifolds by showing that they are birational to the projective space in an explicit way. This is the key ingredient in our computation of the delta minimizer of the Del Pezzo fourfolds.

\subsection{Properties of quintic Del Pezzo fourfolds}
\label{Subsection: Properties of Del Pezzo fourfolds}
Let $M=M_4$ be a quintic Del Pezzo fourfold, with  Pl\"ucker polarization $\CO_M(1)$. We denote by $S\seq M$ a plane of {\it non-vertex type} (introduced by \cite{Fuj81} with the property $c_2(N_{S/M})=2$, which will be formulated explicitly in Section~\ref{Subsection: Properties of Del Pezzo fourfolds}). Let $\pi\colon  \tilde{M}\to M $ be the blowup of $M$ along $S$ with exceptional divisor $E_S$. There is a unique divisor $D_9 \in |\CO_M(1)|$ with $\ord_{E_S}(D_9)=2$; let $\tilde{D}_9$ be the strict transform of $D_9$ on $\tilde{M}$. It was shown in \cite[Equation~(10.9)]{Fuj81} that there is a morphism $\tau\colon \tilde{M}\to \IP^4$ which is the blowup of $\IP^4$ along a twisted cubic curve $C_3$ in a hyperplane $H_9\seq \IP^4$ with exceptional divisor $\tilde{D}_9$, and $E_S$ is the strict transform of $H_9$. Hence $\tilde{D}_9$ and $E_S$ intersect transversally. Set $F=\tilde{D}_9\cap E_S$. Write the restrictions as $\pi_{E_S}\colon  E_S\to S$ and $\tau_{E_S}\colon  E_S\to H_9$. We will show that there is an $\Aut(M)$-invariant conic curve $C_2\seq S$. Set $\tT=\pi^{-1}(C_2)\seq E_S$. Then $T=\tau_{E_S}(\tT)\seq H_9$ is the tangent developable of $C_3$, which is a quartic surface with a simple cusp along~$C_3$. 

We compute the intersection numbers of some divisors on $E_S$. 
Since $\tau_{E_S}\colon E_S\to H_9\cong \IP^3$ is the blowup of $\IP^3$ along a twisted cubic $C_3$ with exceptional divisor $F$ and normal bundle $N_{C_3/H_9}\cong \CO_{C_3}(5)^{\oplus 2}$, the restriction $\tau_{F}\colon F\to C_3$ is a trivial $\IP^1$-bundle with $F|_F=-h_F+5l_F$, where $l_F=\tau_{F}^*\CO_{C_3}(1)$ and $h_F$ is the pull-back of $\CO_{\IP^1}(1)$ on a fiber. Note that $\CO_{H_9}(1)|_{C_3}\sim\CO_{C_3}(3)$ and $(\tau_{E_S}^*\CO_{H_9}(1))|_F \sim \tau_{F}^*(\CO_{H_9}(1)|_{C_3})$. We omit $\tau^*_{E_S}$ and have
\begin{eqnarray}\label{Intersection on E_S}
\CO_{H_9}(1)^2\cdot F = 0, 
\quad
\CO_{H_9}(1)\cdot F^2 =-3, 
\quad
F^3=-10. 
\end{eqnarray}

Next we compute the intersection numbers of some divisors on $\tT$. We shall denote by $\pi_{C_2}\colon \tT\to C_2$ the restriction of $\pi_{E_S}$ on $\tT$, which is a trivial $\IP^1$-bundle over $C_2$. Indeed, we have $E_S\cong\IP(N^\lor_{S/M})$ and $c_1(N_{S/M})=0$. Hence $N_{S/M}|_{C_2}$ is a trivial vector bundle of rank~$2$ on $S$, and $\tT\cong \IP(N^\lor_{S/M}|_{C_2})$. Let $l_\tT=\pi_{C_2}^*\CO_{C_2}(1)$ and $h_\tT$ be the restriction of $\CO_{E_S/S}(1)$. Then we have $\tT|_\tT\sim 4l_\tT$. Recall that we have $F\cdot \tT =2C$, where $C$ is a rational curve in the linear system $|h_\tT+l_\tT|$ isomorphic to $C_3$ via $\tau_{E_S}$ and isomorphic to $C_2$ via $\pi_{E_S}$. Also note  $\tau_{E_S}^*\CO_{H_9}(4) \sim \tT+2F$. We conclude that 
\begin{eqnarray}\label{Intersection on R}
\CO_{H_9}(1)|_\tT\sim h_\tT+2l_\tT, 
\quad
F|_\tT\sim 2h_\tT+2l_\tT. 
\end{eqnarray}

We shall present a quintic Del Pezzo fourfold explicitly in the projective space by writing down the defining equations for the convenience of computing the automorphism group and to study the Sarkisov link structure presented above. 
Recall that $\Gr=\Gr(2,V_5)$ is embedded in $\IP(\wedge^2V_5)$ via the Pl\"ucker embedding, and $M=M_4$ is a smooth intersection $\Gr\cap H\cap H'$, where $H$, $H'$ are hyperplanes in $\IP(\wedge^2V_5)$. Let $\{e_0,e_1,e_2,e_3, e_4\}$ be a basis of $V_5$, and $\{e_j^*\}\seq V_5^*$ be the corresponding dual basis. Set $e_{ij}=e_i\wedge e_j$, $e_{ij}^* = e_i^*\wedge e_j^*$. Any point of $\wedge^2 V_5$ can be denoted by 
$$X_0e_{01}+ X_1e_{02}+ X_2e_{03}+ X_3e_{04}+ X_4e_{12}+ X_5e_{13}+ X_6e_{14}+ X_7e_{23}+ X_8e_{24}+ X_9e_{34}, $$
where $X_i\in \IC$ and $[X_0,\ldots, X_9]$ forms projective coordinates
of $\IP(\wedge^2V_5)$. The defining equations of $\Gr\seq\IP(\wedge^2V_5)$ are 
\begin{eqnarray*}
\left\{ \begin{array}{ll}
X_0X_7-X_1X_5+X_2X_4=0, \\
X_0X_8-X_1X_6+X_3X_4=0, \\
X_0X_9-X_2X_6+X_3X_5=0, \\
X_1X_9-X_2X_8+X_3X_7=0, \\
X_4X_9-X_5X_8+X_6X_7=0.
\end{array} \right.
\end{eqnarray*}

Any hyperplane $H\seq \IP(\wedge^2V_5)$ corresponds to a linear functional $l\in \wedge^2V_5^*$, which can be viewed as a skew-symmetric $2$-form on $V_5$. We define 
$$l=2(e_{03}^*+e_{14}^*), \quad 
l'=2(e_{13}^*+e_{24}^*), $$
whose corresponding skew-symmetric matrices under the basis $\{e_i\}$ are
\begin{eqnarray*}
J =
\left( \begin{array}{ccccc}
0 & 0 & 0 & 1 & 0 \\
0 & 0 & 0 & 0 & 1 \\
0 & 0 & 0 & 0 & 0 \\
-1& 0 & 0 & 0 & 0 \\
0 &-1 & 0 & 0 & 0 
\end{array} \right), \quad
J' =
\left( \begin{array}{ccccc}
0 & 0 & 0 & 0 & 0 \\
0 & 0 & 0 & 1 & 0 \\
0 & 0 & 0 & 0 & 1 \\
0 &-1 & 0 & 0 & 0 \\
0 & 0 &-1 & 0 & 0 
\end{array} \right). 
\end{eqnarray*}
The hyperplanes defined by $l$ and $l'$ are $H=\{X_2+X_6=0\}$ and $H'=\{X_5+X_8=0\}$. One may check by the Jacobian criterion that $M=G\cap H\cap H'\seq \IP^7_{X_0,X_1,X_2,X_3,X_4,X_7,X_8,X_9}$ is smooth, hence is a quintic Del Pezzo fourfold. 
The defining equations of $M\seq \IP^7$ become 
\begin{eqnarray*}
\left\{ \begin{array}{ll}
X_0X_7+X_1X_8+X_2X_4=0, \\
X_0X_8+X_1X_2+X_3X_4=0, \\
X_0X_9+X_2^2-X_3X_8=0, \\
X_1X_9-X_2X_8+X_3X_7=0, \\
X_4X_9+X_8^2-X_2X_7=0.
\end{array} \right.
\end{eqnarray*}

It was shown in \cite[Equation~(10.9)]{Fuj81} that there are two types of planes on $M$, namely, the {\it non-vertex-type} planes $S$ and the {\it vertex-type} planes $R$. They are characterized by the second Chern classes of their normal bundles in $M$: 
$$c_2(N_{S/M})=2,\quad c_2(N_{R/M})=1. $$
In our explicit case, they are (see Theorem~\ref{Theorem. c(N_S/M_4)})
\begin{eqnarray}
\label{Eqnarray. def. S}
S=\{X_2=X_3=X_7=X_8=X_9=0\}, 
\end{eqnarray}
\begin{eqnarray}
R_\lambda =
\{\lambda^2X_0-\lambda X_1+X_4=
X_2+\lambda X_3=
X_7+\lambda^3X_3=
X_8-\lambda^2X_3=X_9=0\}, 
\end{eqnarray}
where $\lambda \in \IC \cup \{\infty\}$, that is, 
$R_\infty= \{X_0=X_2=X_3=X_8=X_9=0\}. $
It was proved by K.~Fujita in \cite{Fuj17} that $\beta(E_S)<0$; hence $M$ is K-unstable. 

With this explicit $M=M_4$, we can study the diagram $M \gets \tilde{M}\to \IP^4$ locally. 

Let us  work on the affine open subset $\{X_0=1\}$ of $M$, which is isomorphic to $\IA^4$ with coordinate $(X_1,X_4,X_2,X_3) = : (u,v,x,y)$. Hence
\begin{eqnarray*}
\left\{ \begin{array}{ll}
X_7=u(ux+vy)-vx, \\
X_8=-(ux+vy), \\
X_9=-y(ux+vy)-x^2, 
\end{array} \right.
\end{eqnarray*}
and $S=\{x=y=0\}$. So $\tM= \Bl_{(x,y)}M = \{\xi_0y=\xi_1x\} \seq M\times \IP^1_{\xi_0,\xi_1}$. We also replace $\tM$ by the affine open subset $\{\xi_1=1\}$ and set $s=\xi_0$, which is isomorphic to $\IA^4$ with coordinate $(u,v,s,y)$. The morphism $\pi$ is just
\begin{eqnarray*}
\pi\colon\tM \lra M,\quad (u,v,s,y) \longmapsto (u,v,sy,y). 
\end{eqnarray*}
Hence the exceptional divisor of $\pi\colon \tM\to M$ is $E_S= \{y=0\}$, and 
\begin{eqnarray*}
\left\{ \begin{array}{ll}
\pi^*(X_2)=sy, \\
\pi^*(X_3)=y,  \\
\pi^*(X_7)=(u(us+v)-vs)y, \\
\pi^*(X_8)=-(us+v)y, \\
\pi^*(X_9)=-(s^2+us+v)y^2.
\end{array} \right.
\end{eqnarray*}
The morphism $\tau\colon \tM\to \IP^4$ is determined by $|\CO_M(1)-E_S|=\pi^*\la X_2,X_3,X_7,X_8,X_9\ra$. Locally, 
\begin{eqnarray*}
\tau: (u,v,s,y) \longmapsto \left[s,1,u(us+v)-vs, -(us+v), -(s^2+us+v)y\right]. 
\end{eqnarray*}
The strict transform of $D_9=\{X_9=0\}$ is $\tD_9=\{s^2+us+v=0\}$, which is contracted by $\tau\colon\tM\to \IP^4$ to the twisted cubic curve $C_3\colon \IA^1\to \IP^4$, $t\mapsto [t,1,t^3,t^2,0]$. This is just the exceptional divisor of the blowup $\tau\colon \tM\to \IP^4$ of this twisted cubic curve. We also see that $\tau(E_S)=H_9\cong \IP^3$ is the projective $3$-space spanned by $C_3$. 

Consider the tangent developable $T$ of $C_3$ in $H_9 = \IP^3_{X,1,Z,Y}$, 
\begin{eqnarray*}
\left \{4Y\left(Y-X^2\right){}^2-4X\left(Y-X^2\right)(Z-XY)+(Z-XY)^2=0\right\}. 
\end{eqnarray*}
This also defines the cone over the tangent developable of $C_3$ in $\IP^4$, whose strict transform to $\tM$ is 
\begin{eqnarray*}
\tQ_0=\left\{4Y-4X(-u)+(-u)^2=0\right\}=\left\{u^2-4v=0\right\}, 
\end{eqnarray*}
where $X=s$, $Y=-(us+v)$ and $Z=u(us+v)-vs$ (hence $Z-XY=(-u)(Y-X^2)$). Hence this is also the strict transform via $\pi$ of the quadric hypersurface 
\begin{eqnarray}
\label{Eqnarray: Q_(0,3)}
Q_0 = \left\{X_1^2-4X_0X_4=0\right\} \seq M. 
\end{eqnarray}

\begin{rmk}
We will see that $Q_0$ is invariant under a maximal reductive subgroup $G$ of $\Aut(M)$. The log Fano pair $(M,cQ_0)$ is essential in our study of the K-stability of special GM fourfolds. 
\end{rmk}

We denote by $\tT=\tQ_0 \cap E_S$ the strict transform of the tangent developable $T$ of $C_3\seq H_9$ via $\tau$, and by $C_2=Q_0\cap S$ the corresponding conic curve in $S=\IP^2_{X_0,X_1,X_4}$. 
The restrictions of $\pi$ and $\tau$ to $E_S$ are 
\begin{align*}
\pi_{E_S}\colon E_{S} \lra S, &\quad
(u,v,s) \longmapsto (u,v), \\
\tau_{E_S}\colon E_{S} \lra \IP^3, &\quad
(u,v,s) \longmapsto [s,1,u(us+v)-vs, -(us+v)]. 
\end{align*}
We see that $E_S$ is a $\IP^1$-bundle over $S$ via $\pi$ and that it is the blowup of $H_9\seq\IP^3$ along the twisted cubic $C_3$ with exceptional divisor $F=\tD_9\cap E_S$. And $\pi_F\colon F\to S$ is a double cover ramified along $C_2\seq S$. 

\subsection{The automorphism group of a quintic Del Pezzo fourfold} 
\label{Subsection: Aut of M_4}
We compute the automorphism group of $M=M_4$ in our explicit setting in this subsection; see \cite{PV99}. We would like to mention that there is a detailed study of $\Aut(M)$ in \cite{PZ18}.

Note that $M=\{p\in \Gr\mid l(p)=l'(p)=0\}$ and $\Aut(\Gr)=\PGL_5(\IC)$. For any $P\in \GL_5(\IC)$, $P(M)=M$ is equivalent to $P^*\la l,l'\ra=\la l,l'\ra$. Hence
$$\Aut(M)=\left\{P\in \GL_5(\IC)\mid \left\la P^TJP, P^TJ'P\right\ra=\left\la J,J'\right\ra\right\}/\IG_m, $$
$$\aut(M)=\left\{Q\in \sl_5(\IC)\mid \left\la Q^TJ+JQ, Q^TJ'+JQ\right\ra=\left\la J,J'\right\ra\right\}.$$
For any $Q=(q_{ij})\in \aut(M)$, there exist $a,b,c,d\in \IC$ such that
\begin{eqnarray*}
\left\{ \begin{array}{l}
Q^TJ+JQ \hphantom{''}= aJ+bJ',\\
Q^TJ'+J'Q = cJ+dJ'.
\end{array}\right.
\end{eqnarray*}
We get some linear equations in $q_{ij}$ by solving which we see that there exist $e,f,g,h\in\IC$ such that
\begin{eqnarray}
\label{Matrix: Aut(M_4)}
Q =
\left( \begin{array}{ccccc}
-2d & 2b & 0 & e & f \\
c & -a-d & b & f & g \\
0 & 2c & -2a & g & h \\
0 & 0 & 0 & a+2d & -c \\
0 & 0 & 0 & -b & 2a+d 
\end{array} \right).
\end{eqnarray}
The Lie algebra $\aut(M)$ is non-reductive. 
Let $G\seq \Aut(M)$ be the reductive subgroup whose Lie algebra $\ng$ is generated by 
\begin{eqnarray} 
\label{Matrix: Aut(M_4,Q_0)}
\left( \begin{array}{ccccc}
-2d & 2b & 0 & 0 & 0 \\
c & -a-d & b & 0 & 0 \\
0 & 2c & -2a & 0 & 0 \\
0 & 0 & 0 & a+2d & -c \\
0 & 0 & 0 & -b & 2a+d 
\end{array} \right).
\end{eqnarray}
Then the quadric divisor $Q_0$ of $M$ defined by $\{X_1^2-4X_0X_4=0\}$ is $G$-invariant, and $G=\Aut(M,Q_0)$. It is  clear that $D_9=\{X_9=0\}\cap M$ and $S\seq D_9$ are $G$-invariant. We set $D_{9,Q}=Q_0\cap D_9$; then $C_2=D_{9,Q}\cap S= Q_0\cap S$ is the unique $G$-invariant curve in $S\cong\IP^2$. The point $o_9=[0,\dots,0,1]\in Q_0$ is the unique $G$-invariant point on $Q_0$. By \cite[Proposition 6.8]{PV99}, the $\Aut(M)$-orbit decomposition of $M$ is 
$$M=(M\setminus D_9) \sqcup
    (D_9\setminus S) \sqcup
    (S\setminus C_2) \sqcup C_2. $$
Hence $C_2\seq M$ is the unique minimal $\Aut(M)$-orbit. 

\begin{lem}
\label{Lemma. center of aut(M_4,Q_0)}
The center of\, $\ng$ is generated by $\diag(-2,-2,-2,3,3)$. 
\end{lem}

Choosing $d=-a$ in (\ref{Matrix: Aut(M_4,Q_0)}), we see that $G=\Aut(M_4,Q_0)$ has a subgroup $G_0 \cong \SL_2$ generated by 
\begin{eqnarray}
\label{Matrix: Aut(M_4,Q_0)-2}
P= 
\left( \begin{array}{cc}
\Sym^2 A & 0  \\
0 & (A^T)^{-1}  
\end{array} \right) 
=
\left( \begin{array}{ccccc}
a^2 & 2ab & b^2 & 0 & 0 \\
ac & ad+bc & bd & 0 & 0 \\
c^2 & 2cd & d^2 & 0 & 0 \\
0 & 0 & 0 & d & -c \\
0 & 0 & 0 & -b & a 
\end{array} \right) 
\end{eqnarray}
for $A=\begin{psmallmatrix}
a & b  \\
c & d  
\end{psmallmatrix}  \in \SL_2$. The $G_0$-action on $M$ lifts to $E_S$ since $S\seq M$ is $G_0$-invariant. Recall that the exceptional divisor $E_S \seq \IP^2_{X_0,X_1,X_4} \times \IP^3_{Y_2,Y_3,Y_7,Y_8}$ is defined by  
\begin{eqnarray}
\label{Eqnarray. E_S defining equation}
\left\{ \begin{array}{ll}
X_0Y_7+X_1Y_8+X_4Y_2=0, \\
X_0Y_8-X_1Y_2+X_4Y_3=0. 
\end{array} \right.
\end{eqnarray}
Since the $G_0$-action on $\IP^9_{X_0,\dots,X_9}$ is given by $\wedge^2 P$, we get the $G_0$-action on $E_S\seq \IP^3_{X_0,X_1,X_4} \times \IP^3_{Y_2,Y_3,Y_7,Y_8}$ by 
\begin{eqnarray}
\label{Eqnarray. matrix E_S}
\left(\left( \begin{array}{ccc}
a^2 & ab & b^2 \\
2ac & ad+bc & 2bd \\
c^2 & cd & d^2
\end{array} \right), 
\left( \begin{array}{ccccc}
ad^2+2abc & -a^2c & b^2d & -b^2c-2abd  \\
-3a^2b & a^3 & -b^3 & 3ab^2  \\
3c^2d & -c^3 & d^3 & -3cd^2  \\
-bc^2-2acd & ac^2 & -bd^2 & -ad^2-2bcd  
\end{array} \right) \right). 
\end{eqnarray}
We may directly check that the $G_0$-action on $E_S$ has orbit decomposition (see Section~\ref{Subsection: Properties of Del Pezzo fourfolds})  
\begin{eqnarray}
\label{Eqnarray. E_S orbit decomposition}
E_S= (E_S\setminus (\tT\cup F)) \sqcup (\tT\setminus C) \sqcup (F \setminus C) \sqcup C. 
\end{eqnarray}

\subsection{The delta function on the co-weight lattice}
In this subsection, we compute the delta function $\delta(\wt_\xi)=A_{M}(\wt_\xi)/S(-K_M; \wt_\xi)$ for any $\xi\in N_\IR$ as a preparation for computing $\delta(M)$, where $N=N(\IT)$ is the co-weight lattice of a maximal torus $\IT\seq \Aut(M)$. 
We first present an Okounkov body of $M_4$, which will be used in the computation of the expected vanishing order (or $S$-invariant, see \cite[Section 2.5]{BJ17}). 
Consider the affine chart $U_9=M\cap \{X_9\ne 0\}$; then 
\begin{eqnarray*}
\left\{ \begin{array}{l}
x_0 = -x_2^2+x_3x_8,\\
x_1 = x_2x_8-x_3x_7,\\
x_4 = -x_8^2+x_2x_7,
\end{array} \right. 
\end{eqnarray*}
where $x_i=X_i/X_9$ $(i=0,1,2,3,4,7,8,9)$ and $U_9\cong \IA^4_{x_2,x_3,x_7,x_8}$. The coordinate hyperplanes induce a faithful valuation $\nv\colon K(M)^\times\to \IZ^4$ as follows (with respect to the lexicographic ordering of $\IZ^4$): 
\begin{eqnarray*}
\left\{ \begin{array}{c}
\nv(x_2) = (1,0,0,0),\\
\nv(x_3) = (0,1,0,0),\\
\nv(x_7) = (0,0,1,0), \\
\nv(x_8) = (0,0,0,1), \\
\nv(x_9) = (0,0,0,0),
\end{array} \right. \quad
\left\{ \begin{array}{c}
\nv(x_0) = (0,1,0,1),\\
\nv(x_1) = (0,1,1,0),\\
\nv(x_4) = (0,0,0,2). 
\end{array} \right. 
\end{eqnarray*}
The convex polytope spanned by the above eight points in $\IR^4$ (denoted by $\BO$) has volume $\frac{5}{24}$, which is equal to $\vol(\CO(1))/4!$. Hence $\BO$ is an Okounkov body of the polarized variety $(M, \CO_M(1))$.

We denote by $P_a(t)=\exp(-tQ_a)$ and $P_d(t)=\exp(-tQ_d)\in \Aut(M)$ the one-parameter groups determined by 
\begin{eqnarray*}
Q_a =
\left( \begin{array}{ccccc}
0 & 0 & 0 & 0 & 0 \\
0 & -1 & 0 & 0 & 0 \\
0 & 0 &-2 & 0 & 0 \\
0 & 0 & 0 & 1 & 0 \\
0 & 0 & 0 & 0 & 2 
\end{array} \right), \quad
Q_d =
\left( \begin{array}{ccccc}
-2 & 0 & 0 & 0 & 0 \\
0 & -1 & 0 & 0 & 0 \\
0 & 0 & 0 & 0 & 0 \\
0 & 0 & 0 & 2 & 0 \\
0 & 0 & 0 & 0 & 1 
\end{array} \right). 
\end{eqnarray*}
Then $M$ admits a $\IT=\IG_m^2$-action generated by $P_a$ and $P_d$, whose actions on $\IP\cong \IP^7$ are 
\begin{eqnarray*}
P_a(t)\cdot[X_{0,1,2,3,4,7,8,9}] 
= [tX_0,t^{2}X_1,t^{-1}X_2,t^{-2}X_3,t^{3}X_4,tX_7,X_8,t^{-3}X_9],\\
P_d(t)\cdot[X_{0,1,2,3,4,7,8,9}]  
= [t^{3}X_0,t^{2}X_1,X_2,tX_3,tX_4,t^{-2}X_7,t^{-1}X_8,t^{-3}X_9].
\end{eqnarray*}
The weights of the $\IT$-action on $\IP^7$ are 
\begin{eqnarray*}
\eta(X_0,X_1,X_2,X_3,X_4,X_7,X_8,X_9) = (1,2,-1,-2,3,1,0;-3), \\
\zeta(X_0,X_1,X_2,X_3,X_4,X_7,X_8,X_9) = (3,2,0,1,1,-2,-1;-3). 
\end{eqnarray*}
Let $N_\IZ=\Hom(\IG_m,\IT)$ and $M_\IZ=\Hom(\IT,\IG_m)$ be the co-weight and weight lattices of the $\IT$-action. Then $N_\IZ$ is generated by $\eta$ and $\zeta$. 
We simply denote by $\wt_{(a,b)} = \wt_{a\eta+b\zeta}$ the valuation induced by $a\eta+b\zeta\in N_\IR$. Hence the delta function $\de(\wt)=A_M(\wt)/S(-K_M;\wt)$ is defined on $N_\IR$. 
The delta function is invariant up to scaling; that is, $\de(\wt_{(ka,kb)})=\de(\wt_{(a,b)})$ for any $k\in \IR_{>0}$. 

We should explain the relationship between the weights $(a\eta+b\zeta)(X_i)$ and the valuations $\wt_{(a,b)}(X_i)$. Indeed, they are different by a shift. 
Let $m_{(a,b)}=\min_{0\le i\le 9}(a\eta+b\zeta)(X_i)$ (we also denote the minimizer by $i_{(a,b)}$); then
\begin{eqnarray} \label{shifted wt & valuation}
\wt_{(a,b)}(X_i)=(a\eta+b\zeta)(X_i)-m_{(a,b)}
\end{eqnarray}
for any $0\le i\le9$, $i\ne 5,6$. 
For example, consider the one-parameter subgroup defined by $\zeta$, that is, $(a,b)=(0,1)$. We have $m_{(0,1)}=-3$. Then the valuation $\wt_\zeta=\wt_{(0,1)}$ is 
$$\wt_\zeta(X_{0,1,2,3,4,7,8,9}) 
=(6,5,3,4,4,1,2,0). $$

Next, we compute the delta function on $N_\IR$. For example, consider the valuation $\wt_{(0,1)}=\wt_\zeta$ computed above, 
which is a monomial valuation on $U_9$ with weight $(3,4,1,2)$ with respect to $x_2,x_3,x_7,x_8$, where $x_i=X_i/X_9$ $(i=2,3,7,8)$ is the coordinate chart of $U_9$. Hence by \cite[Proposition 5.1]{JM12}, the log discrepancy of $\wt_{(0,1)}$ is 
$$A_M(\wt_\zeta)=3+4+1+2=10. $$
One may show that the concave transform of $\wt_\zeta$ on $\BO$ is 
$$G_\zeta(w,x,y,z)=3w+4x+y+2z. $$
Similarly, the concave transform of $\wt_\eta=\wt_{(1,0)}$ is $$G_\eta(w,x,y,z)=2w+x+4y+3z. $$
Hence the expected vanishing order of $\wt_\zeta$ with respect to $-K_M=\CO_M(3)$ is 
$$S(-K_M; \wt_\zeta)=3S(\CO_M(1), \wt_\zeta)=\frac{3}{\vol(\BO)}\int_\BO G d\rho = \frac{48}{5}.$$
We see that the value of the delta function on $\wt_\zeta$ is $\de(\wt_\zeta)=\frac{A_M(\wt_\zeta)}{S(-K_M;\wt_\zeta)}=\frac{25}{24}>1$.

On the other hand, we consider the valuation $\wt_{(0,-1)}=\wt_{-\zeta}$, whose values on $X_i$ are
$$\wt_\zeta(X_{0,1,2,3,4,7,8,9}) 
=(0,1,3,2,2,5,4,6). $$
Note that $\wt_{\zeta}(X_i)+\wt_{-\zeta}(X_i)=6$. By Equation~(\ref{Eqnarray: G^BP_xi}), we have $G_\zeta+G_{-\zeta}=6$ on $\BO$. Hence
$$S(\CO_M(1);\wt_\zeta)+S(\CO_M(1);\wt_{-\zeta})=6,$$ 
so $S(-K_M;\wt_{-\zeta})=\frac{42}{5}$. The valuation $\wt_{-\zeta}$ is supported on the affine chart $U_0=M\cap\{X_0=1\}\cong \IA^4$ (with coordinates $x_1,x_2,x_3,x_4$) and is monomial with weight $(1,3,2,2)$. Hence 
$A_M(\wt_{-\zeta})=1+3+2+2=8$, and $\delta(\wt_{-\zeta})=\frac{20}{21}<1$. 
Symmetrically, we have $\delta(\wt_\eta)=\delta(\wt_{(1,0)})=\frac{25}{24}$ and $\delta(\wt_{-\eta})=\delta(\wt_{(-1,0)})=\frac{20}{21}$. 

For any $(a,b) \in \IR^2_{>0}$, we have
$$\wt_{a\eta+b\zeta}(X_{0,1,2,3,4,7,8,9})=(4a+6b,5a+5b,2a+3b,a+4b,6a+4b,4a+b,3a+2b,0). $$
Then $\wt_{a\eta+b\zeta}$ has the same center as $\wt_\zeta$ and is monomial with weight $(2a+3b,a+4b,4a+b,3a+2b)$ with respect to $x_2,x_3,x_7,x_8$. Hence $A_M(\wt_{a\eta+b\zeta})=10(a+b)$, and the concave transform of $\wt_{a\eta+b\zeta}$ on $\BO$ is $G_{a\eta+b\zeta}=aG_\eta+bG_\zeta$. So $S(\wt_{a\eta+b\zeta})=aS(\wt_\eta)+bS(\wt_\zeta)=\frac{48}{5}(a+b)$ and $\delta(\wt_{a\eta+b\zeta})=\frac{25}{24}$. 

If $a\ge b, b<0$, we simply assume $b=-1$. Then the minimizer of $a\eta-\zeta$ is 
\begin{eqnarray*}
i_{(a,-1)} = 
\left\{ \begin{array}{lll}
9, & 
4\le a, \\
3, &
\frac{2}{3}\le a\le 4,\\
0,  &
-1\le a\le \frac{2}{3},  \\
\end{array} \right. 
\end{eqnarray*}
In each case, the valuation $\wt_{(a,-1)}=\wt_{a\eta-\zeta}$ is monomial on $U_i, i=i_{(a,-1)}$ with respect to the coordinate chart. More precisely, the weight of the monomial valuation and the coordinate charts are
\begin{eqnarray*}
\,
\left\{ \begin{array}{lll}
(2a-3,a-4,4a-1,3a-2), &
(x_2,x_3,x_7,x_8), & 
4\le a, \\
(3a-2,4a-1,a+1,-a+4),  &
(x_0,x_1,x_2,x_9), &
\frac{2}{3}\le a\le 4,\\
(a+1,-2a+3,-3a+2,2a+2), &
(x_1,x_2,x_3,x_4),  &
-1\le a\le \frac{2}{3}. 
\end{array} \right. 
\end{eqnarray*}
It is not difficult to compute the concave transform on $\BO$:  
\begin{eqnarray*}
G_{a\eta-\zeta}=
\left\{ \begin{array}{lll}
aG_\eta -G_\zeta, & 
4\le a, \\
aG_\eta -G_\zeta + (4-a), &
\frac{2}{3}\le a\le 4,\\
aG_\eta -G_\zeta + (6-4a),  &
-1\le a\le \frac{2}{3}. 
\end{array} \right. 
\end{eqnarray*}
Hence we have the log discrepancies and the $S$-invariants, 
\begin{eqnarray*}
A_M(\wt_{a\eta-\zeta})=
\left\{ \begin{array}{lll}
10(a-1), \\
7a+2, \\
-2a+8, 
\end{array} \right. \quad
S(-K_M;\wt_{a\eta-\zeta})= \frac{3}{5} \cdot
\left\{ \begin{array}{lll}
16(a-1), & 
4\le a, \\
11a+4, &
\frac{2}{3}\le a\le 4,\\
-4a+14,  &
-1\le a\le \frac{2}{3}. 
\end{array} \right. 
\end{eqnarray*}
So the delta function on $\{a\ge-1, b=-1\}$ is 
\begin{eqnarray*}
\delta(\wt_{a\eta-\zeta})= \frac{5}{3} \cdot
\left\{ \begin{array}{lll}
5/8, & 
4\le a, \\
(7a+2)/(11a+4), &
\frac{2}{3}\le a\le 4,\\
(a-4)/(2a-7),  &
-1\le a\le \frac{2}{3}. 
\end{array} \right. 
\end{eqnarray*}
Symmetrically, we have the delta function on $\{a=-1, b\ge-1\}$. We conclude that 
\begin{eqnarray*}
\delta(\wt_{(a,b)}) = \delta(\nu_{a\eta+b\zeta}) = \frac{5}{3} \cdot
\left\{ \begin{array}{ll}
(4a+b)/(7a+2b),  &
a\le b\le -\frac{2}{3}a, \\
(2a-7b)/(4a-11b), &
-\frac{2}{3}a\le b\le -4a,\\
5/8, & 
a\ge-4b \,\&\, b\ge-4a, \\
(7a-2b)/(11a-4b), &
-\frac{2}{3}b\le a\le -4b,\\
(a+4b)/(2a+7b),  &
b\le a\le -\frac{2}{3}b. 
\end{array} \right. 
\end{eqnarray*}

Hence the delta function on $N_\IR$ is minimized by 
$$\wt_{(-1,-1)}(X_{0,1,2,3,4,7,8,9}) = 5\cdot(0,0,1,1,0,1,1,2), $$
which is the K-unstable center $\ord_{E_S}$ (up to rescaling by $5$) of $M_4$ found by \cite{Fuj17}.

\subsection{The delta minimizer of a quintic Del Pezzo fourfold}

It was proved in \cite{Fuj17} that $\beta(E_S)<0$. We have shown that the delta function is minimized by $E_S$ on $N_\IR$. In this subsection, we will further show that $E_S$ is a minimizer of $\delta(M)$. 

By the celebrated theory of \cite{LXZ21,Zhu21}, there exists an $\Aut(M)$-invariant prime divisor $E$ over $M$ computing the delta invariant. Hence $\delta(M;-K_M)=\delta_{C_2}(M;-K_M)$ since $C_2\seq M$ is the unique minimal $\Aut(M)$-orbit. 

\begin{thm} 
\label{Theorem E_S minimize delta M}
The delta invariant of $M_4$ is $\delta(M_4)=\frac{25}{27}$ and is minimized by $E_S$.  
\end{thm}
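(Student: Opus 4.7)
The inequality $\delta(M_4)\le\frac{25}{27}$ is immediate from the torus computation of the previous subsection: evaluating the piecewise formula for $\delta(\nu_{(a,b)})$ at $(a,b)=(-1,-1)$ (which falls in the first listed case, $a\le b\le-\tfrac{2}{3}a$) yields $\delta(\nu_{(-1,-1)})=\tfrac{25}{27}$, while the weight vector $5\cdot(0,0,1,1,0,1,1,2)$ identifies $\nu_{(-1,-1)}=5\,\ord_{E_S}$. Combined with the reduction $\delta(M_4)=\delta_{C_2}(M_4)$ supplied by \cite{LXZ21,Zhu21} and the orbit structure, only the matching lower bound $\delta_{C_2}(M_4)\ge\tfrac{25}{27}$ needs to be proved.

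For the lower bound I would apply the Abban--Zhuang inequality with the $\pi$-exceptional divisor $E_S$:
\begin{equation*}
\delta_{C_2}(M_4)\ \ge\ \min\Bigl\{\tfrac{A_M(E_S)}{S_M(E_S)},\ \inf_{q\in\pi_{E_S}^{-1}(C_2)}\!\delta_q\bigl(E_S;W_{\bu}^{E_S}\bigr)\Bigr\}.
\end{equation*}
The first ratio matches the upper bound. Clearly $A_M(E_S)=2$, and the Sarkisov link $M\xleftarrow{\pi}\tM\xrightarrow{\tau}\IP^{4}$ yields the relation $\tau^{*}H_{\IP^4}=\pi^{*}H_M-E_S$ (since $\tau$ is defined by $|\pi^{*}H_M-E_S|$), so
$\pi^{*}(-K_M)-tE_S=(3-t)\pi^{*}H_M+t\,\tau^{*}H_{\IP^4}$
is nef on $t\in[0,3]$, and for $t\in[3,6]$ is effective with volume $(6-t)^{4}$ (as $\tD_9$ is $\tau$-exceptional, contracting to the twisted cubic $C_3\subset\IP^4$). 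Using the intersection numbers $(\pi^{*}H_M)^{4-k}(\tau^{*}H_{\IP^4})^{k}=5,5,4,2,1$ for $k=0,\ldots,4$ on $\tM$ to integrate on the nef chamber gives $S_M(E_S)=\tfrac{54}{25}$, hence $A_M(E_S)/S_M(E_S)=\tfrac{25}{27}$.

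The remaining task is to show $\delta_q(E_S;W_{\bu}^{E_S})\ge\tfrac{25}{27}$ for every $q$ in $R:=\pi_{E_S}^{-1}(C_2)$. The refined series $W_{\bu}^{E_S}$ is read off chamber-by-chamber from the Zariski decomposition above. I would iterate Abban--Zhuang on $E_S$ using the divisor $F=\tD_9\cap E_S$, which is simultaneously the $\tau_{E_S}$-exceptional divisor of the blowup $E_S\to\IP^{3}$ over $C_3$ and the ramification divisor of the double cover $\pi_F\colon F\to S$ along $C_2$, so that $R\cap F=\pi_F^{-1}(C_2)$ is exactly the ramification curve. A third refinement by a rational curve $L\subset F$ dominating $C_2$ then reduces the estimate to a piecewise polynomial inequality in the wall parameter that the intersection data~(\ref{Intersection on E_S}) makes into an elementary check. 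The principal obstacle is the bookkeeping for these two successive Zariski decompositions --- first on $\tM$ to pin down both $S_M(E_S)$ and $W_{\bu}^{E_S}$, and then on $E_S$ (itself a blowup of $\IP^{3}$ along $C_3$) to treat the further refined series along $F$ --- but once the correct chambers are identified, the ensuing volume integrals and the final $\IP^{1}$-level inequality are all routine.
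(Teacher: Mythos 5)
Your strategy coincides with the paper's at every structural step: the upper bound from $\nu_{(-1,-1)}=5\,\ord_{E_S}$, the reduction $\delta(M_4)=\delta_{C_2}(M_4)$ via an $\Aut(M)$-invariant minimizer and the orbit decomposition, the value $S_{-K_M}(E_S)=\tfrac{54}{25}$ (your intersection numbers $5,5,4,2,1$ and the two Zariski chambers on $\tM$ are correct and agree with the paper's $S(\CO_M(1);E_S)=\tfrac{18}{25}$), and the flag $E_S\supset F\supset C$ for the successive Abban--Zhuang refinements. The integrals you defer do work out: in the $\CO_M(1)$-normalization the second and third ratios are both $5$, comfortably above the threshold $\tfrac{25}{9}$.

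There is, however, one genuine gap in how you set up the lower bound. You invoke Abban--Zhuang in the pointwise form, committing yourself to proving $\delta_q\bigl(E_S;W^{E_S}_{\bu}\bigr)\ge\tfrac{25}{27}$ for \emph{every} point $q$ of the surface $R=\pi_{E_S}^{-1}(C_2)$ (a $\IP^1$-bundle over the conic). But your next refinement is by $F=\tD_9\cap E_S$, and $F\cap R$ is only the ramification curve $C$; the Abban--Zhuang step through $F$ gives no information at points $q\in R\setminus F$, which form a dense open subset of $R$, so the flag you describe does not reach the set of points you must control. The paper sidesteps this with the strengthened estimate (Theorem \ref{Theorem: Abban-Zhuang estimate}): one only needs $\delta_{C_2}(E_S;W^{E_S}_{\bu\bu})$, the infimum over valuations whose center \emph{dominates} $C_2$; since $\pi_{E_S}(F)=S\supseteq C_2$ the refinement by $F$ applies to all such valuations, and after the further refinement by $C$ (which maps isomorphically onto $C_2$) the chain terminates, as no nontrivial valuation on the curve $C$ has center dominating $C_2$. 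Alternatively one can restrict to $G$-invariant valuations, whose possible centers in $E_S$ are only $F$, $\tilde T$ and $C$. Either repair is needed; as written, the points of $R$ off $C$ are unaccounted for.
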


We have shown that 
$$\frac{A_M(E_S)}{S(-K_M;E_S)}=\delta(\nu_{(-1,-1)})=\frac{25}{27}.$$
Note that $-K_M\cong \CO_M(3)$. It suffices to show 
\begin{eqnarray} \label{delta C_2 lower bound}
\delta_{C_2}(M; \CO_M(1))\ge \frac{25}{9}.
\end{eqnarray}

\begin{proof}[Proof of Theorem~\ref{Theorem E_S minimize delta M}] 
Recall that $\tau_{E_S}\colon E_S\to \IP^3$ is the blowup of a twisted cubic curve $C_3$ with exceptional divisor $F$, and $\pi_{E_S}\colon E_S \to \IP^2$ is a $\IP^1$-bundle with $\tilde{T}=\pi_{E_S}^{-1}(C_2)$, which is the strict transform of the tangent developable $T\seq \IP^3$ of $C_3$. The divisors $F$ and $\tilde{T}$ in $E_S$ are tangent along a curve $C$ of order $2$. They are the only $G$-invariant subvarieties of $E_S$.

By Theorem~\ref{Theorem: Abban-Zhuang estimate}, we have
\begin{eqnarray} 
\delta_{C_2}(M;\CO_M(1))
\ge
\min\left\{
\frac{A_M(E_S)}{S(\CO(1);E_S)} = \frac{25}{9}, 
\mathop{\inf}_{\pi(Z)=C_2}\delta_Z(E_S; W_{\bu\bu}^{E_S})
\right\}. 
\end{eqnarray}
To get an estimate of $\delta_Z(E_S, W_{\bu\bu}^{E_S})$, we shall take the refinement of $W_{\bu\bu}^{E_S}$ by $\tT$ if $Z\ne C$, and by $F$ if $Z=C$.  Let $W_{\bu\bu\bu}^\tT$, $W_{\bu\bu\bu}^F$ be the refinements by $\tT, F$ respectively. Then 
\begin{eqnarray*} 
\delta_Z(E_S, W_{\bu\bu}^{E_S})
\ge \min\left\{
\frac{A_{E_S}(\tT)}{S(W_{\bu\bu}^{E_S};\tT)}, 
\delta_Z(\tT, W_{\bu\bu\bu}^{\tT})\right\} 
= \min\left\{\frac{1}{S(W_{\bu\bu}^{E_S};\tT)}, 
\frac{1}{S(W_{\bu\bu\bu}^{\tT}; Z)}\right\}, \\
\delta_C(E_S, W_{\bu\bu}^{E_S})
\ge \min\left\{
\frac{A_{E_S}(F)}{S(W_{\bu\bu}^{E_S};F)},
\delta_C(F, W_{\bu\bu\bu}^{F})\right\} 
= \min\left\{\frac{1}{S(W_{\bu\bu}^{E_S};F)}, 
\frac{1}{S(W_{\bu\bu\bu}^{F}; C)}\right\}. 
\end{eqnarray*}
We will show that 
\begin{alignat*}{2} 
S(W_{\bu\bu}^{E_S};\tT) &= \frac{1}{10}, &\quad
S(W_{\bu\bu\bu}^{\tT}; Z) &\le \frac{7}{25}, \\
S(W_{\bu\bu}^{E_S};F) &= \frac{1}{5}, &\quad 
S(W_{\bu\bu\bu}^{F}; C) &\le \frac{1}{5}, 
\end{alignat*}
which are all at most $\frac{9}{25}$. The proof will be finished.

{\em Step} 1.~ We first compute the refinement $W_{\bu\bu}^{E_S}$. We make the assumption that $u\in \IQ$, $m\in \IN$ are such that $mu\in\IN$ in the following. Recall that $\tilde{D}_9\seq \tilde{M}$ is the strict transform of $D_9=\{X_9=0\}\seq M$ and $\pi^* D_9 =\tilde{D}_9+2E_S$. The nef and big divisor $\pi^*\CO_M(1)-E_S$ determines the morphism $\tau\colon \tilde{M}\to \IP^4$ contracting $\tilde{D}_9$ to a twisted cubic curve $C_3$ in the hyperplane $H_9\seq \IP^4$. 
We have the following  decomposition:
\begin{align*}
\pi^*\CO_M(1)-uE_S
&\,\sim_\IQ\, 
\tilde{D}_9+(2-u)E_S =\,
(2-u)(\tilde{D}_9+E_S)-(1-u)\tilde{D}_9 \\
&\,\sim_\IQ\,
(2-u)\tau^*\CO_{\IP^4}(1)-(1-u)\tilde{D}_9  
\end{align*}
for $0\le u\le 2$. Hence $\pi^*\CO_M(1)-uE_S$ is an ample $\IQ$-divisor for $0<u<1$, and the fixed part of $\pi^*\CO_M(1)-uE_S$ is $(u-1)\tilde{D}_9$ for $1\le u\le2$.
Recall that $\tau_{E_S}\colon  E_S\to H_9=\IP^3$ is the blowup of $H_9$ along a twisted cubic $C_3$ with exceptional divisor $F=\tilde{D}_9|_{E_S}$. The refinement is defined as the quotient 
$W_{m,mu}^{E_S}=\CF_{E_S}^{mu} V_m/\CF_{E_S}^{mu+1}V_m, $
which is embedded in 
$H^0\left(E_S, m\left[\pi^*\CO_M(1)-uE_S\right]|_{E_S}\right)$. 
Using the Kawamata--Viehweg vanishing theorem, we conclude that 
\begin{eqnarray*}
W_{m,mu}^{E_S} = 
\left\{ \begin{array}{ll}
H^0\left(E_S, m\left[(2-u)\tau_{E_S}^*\CO_{H_9}(1)-(1-u)F\right]\right), & 0\le u\le 1,\\[.3em]
H^0\left(E_S, m(2-u)\tau_{E_S}^*\CO_{H_9}(1)\right)+
m(u-1)F,
& 1\le u\le 2.
\end{array} \right.
\end{eqnarray*}
In the second row,  we mean the subspace of 
$$H^0\left(E_S, m\left[(2-u)\tau_{E_S}^*\CO_{H_9}(1)+(u-1)F\right]\right) $$ 
consisting of sections with $\ord_F\ge m(u-1)$. 
We will simply denote this graded linear series by 
\begin{eqnarray*}
W_{(1,u)}^{E_S} = 
\left\{ \begin{array}{ll}
H^0\left(E_S, \CO_{H_9}(2-u)-(1-u)F\right), & 0\le u\le 1,\\[.3em]
H^0\left(E_S, \CO_{H_9}(2-u)\right)+
(u-1)F,
& 1\le u\le 2.
\end{array} \right.
\end{eqnarray*}

{\em Step} 2.~ Next we compute $S(W^{E_S}_{\bu\bu};\tT)$ and $S(W^{\tT}_{\bu\bu\bu};Z)$. More generally, let $F'(\ne F) \seq E_S$ be a prime divisor. Then $\tau_{E_S}(F')\seq H_9\cong \IP^3$ is a surface of degree $d>0$ and multiplicity $r\ge 0$ over $C_3$. Hence $d\ge 2r$ and $F'\sim \CO_{H_9}(d)-rF$. If $1-u-rv\ge 0$, we have
\begin{align*} 
\CF_{F'}^{(v)}W_{(1,u)}^{E_S}
&= H^0\left(E_S, \CO_{H_9}(2-u)
-(1-u)F
-vF'\right) +vF' \\
&= H^0\left(E_S, \CO_{H_9}(2-u-dv)-
(1-u-rv)F\right) +vF', 
\end{align*}
which is non-vanishing when $1-u-rv\le \frac{1}{2}(2-u-dv)$. If $1-u-rv\le 0$, we have 
\begin{align*} 
\CF_{F'}^{(v)}W_{(1,u)}^{E_S}
&= H^0\left(E_S, \CO_{H_9}(2-u)
-vF'\right)+(u-1)F+ vF', \\
&= H^0\left(E_S, \CO_{H_9}(2-u-dv)
\right)+(-1+u+rv)F+ vF', 
\end{align*}
which is non-vanishing when $2-u-dv\ge0$.  We set  
\begin{align*} 
\D_1^{F'}
&= \{(u,v)\mid 0\le 1-u-rv\le \frac{1}{2}(2-u-dv), v\ge 0 \}, \\
\D_2^{F'}
&= \{(u,v)\mid 1-u-rv\le 0, 2-u-dv\ge 0, v\ge 0 \}. 
\end{align*}
Then 
\begin{displaymath}
\vol\left(\CF_{F'}^{(v)}W_{(1,u)}^{E_S}\right) = 
\left\{ \begin{array}{ll}
(2-u-dv)^3-9(2-u-dv)(1-u-rv)^2 +10(1-u-rv)^3, & (u,v)\in \D_1^{F'},\\
(2-u-dv)^3, & (u,v)\in\D_2^{F'}.
\end{array} \right.
\end{displaymath}
We conclude that 
\begin{eqnarray} 
\label{Computation: S(W^E_S; F')}
S(W_{\bu\bu}^{E_S};F')
= \frac{4!}{3!}\frac{1}{5}
\int_{\D_1^{F'}\cup \D_2^{F'}} \vol\left(\CF_{F'}^{(v)}W_{(1,u)}^{E_S}\right) dudv 
= \frac{1}{5(d-r)}.  
\end{eqnarray}
In the case $F'=\tT$, we have $d=4$ and $r=2$. Hence $S(W_{\bu\bu}^{E_S};\tT)=\frac{1}{10}$. Recall that the restrictions of $\CO_{H_9}(1)$ and $F$ on $\tT$ are given by Equation~(\ref{Intersection on R}). 
Consequently,  the refinement
 $W_{\bu\bu\bu}^\tT$ is 
\begin{displaymath}
W_{(1,u,v)}^{\tT} = 
\left\{ \begin{array}{ll}
H^0\left(\tT, uh_\tT+(2-4v)l_\tT\right), & (u,v)\in\D_1^\tT,\\[.3em]
H^0\left(\tT, (2-u-4v)(h_\tT+2l_\tT)\right) +2(-1+u+2v)C, & (u,v)\in \D_2^\tT, 
\end{array} \right.
\end{displaymath}
where 
\begin{align*} 
\D_1^\tT
&= \{(u,v)\mid 0\le1-u-2v, u\ge 0,v\ge 0 \}, \\
\D_2^\tT
&= \{(u,v)\mid 1-u-2v\le0, 2-u-4v\ge 0, v\ge 0 \}. 
\end{align*}

We compute $S(W_{\bu\bu\bu}^{\tT}; Z)$ for $Z\ne C$. Since $\pi_\tT\colon \tT\to C_2$ is a trivial $\IP^1$-bundle, we may assume that $Z\sim ah_\tT+bl_\tT$ with $a>0,b\ge0$. The same procedure gives
\begin{displaymath}
\CF_Z^{w}W_{(1,u,v)}^{\tT} = 
\left\{ \begin{array}{ll}
H^0\left(\tT, (u-aw)h_\tT+(2-4v-bw)l_\tT\right) +wZ ,
& (u,v,w)\in \D_1^Z,\\[.3em]
H^0\left(\tT, (2-u-4v-aw)h_\tT+(2(2-u-4v)-bw)l_\tT\right) &\\
+wZ+ 2(-1+u+2v)C,
& (u,v,w)\in \D_2^Z, 
\end{array} \right.
\end{displaymath}
where
\begin{align*} 
\D_1^Z
&= \{(u,v,w)\mid v, w\ge 0,u+2v\le1, aw\le u, bw\le 2-4v \}, \\
\D_2^Z
&= \{(u,v,w)\mid v,w\ge 0,u+2v\ge1, aw\le 2-u-4v, bw\le2(2-u-4v)\}. 
\end{align*}
Hence
\begin{displaymath}
\vol\left(\CF_Z^{(w)}W_{(1,u,v)}^\tT\right) = 
\left\{ \begin{array}{ll}
2(u-aw)(2-4v-bw),
& (u,v,w)\in \D_1^Z,\\
2(2-u-4v-aw)(2(2-u-4v)-bw),
& (u,v,w)\in\D_2^Z, 
\end{array} \right.
\end{displaymath}

\begin{eqnarray}
\label{Computation: S(W^R;Z)}
S(W_{\bu\bu\bu}^{\tT}; Z)
= \frac{4!}{2!}\frac{1}{5}
\int_{\D_1^Z\cup \D_2^Z} \vol\left(\CF_Z^{(w)}W_{(1,u,v)}^{\tT}\right) dudvdw 
= 
\left\{ \begin{array}{ll}
\frac{7a-b}{25a^2},
& b < 2a,\\[.2em]
\frac{2(4a^2-10ab+9b^2)}{25b^3},
& b\ge 2a. 
\end{array} \right.
\end{eqnarray}
Since $a\ge1$, we conclude that $S(W_{\bu\bu\bu}^{\tT}; Z)\le 7/25$. 

In the case $Z=C$, one may show that $S(W_{\bu\bu\bu}^{\tT}; C)=2/5>9/25$. Hence we need to consider other refinements. 

{\em Step} 3.~ Finally, we compute $S(W_{\bu\bu}^{E_S};F)$ and $S(W_{\bu\bu\bu}^{F};C)$. If $0\le u\le 1$, we  naturally have  
\begin{eqnarray} \label{Filtration F first}
\CF_F^{v}W_{(1,u)}^{E_S}= H^0\left(E_S, \CO_{H_9}(2-u)-(1-u+v)F\right)+vF, 
\end{eqnarray}
which is non-vanishing when $0\le v\le u/2$. 
If $1\le u\le 2$ and $u-1\le v$, the filtration 
$\CF_F^{v}W_{(1,u)}^{E_S}$ is also formulated as Equation~(\ref{Filtration F first}), which is non-vanishing when $v\le u/2$. Finally, if $1\le u\le 2$ and $v\le u-1$, the filtration is trivial:
\begin{eqnarray} \label{Filtration F second}
\CF_F^{v}W_{(1,u)}^{E_S}=(u-1)F+H^0\left(E_S, \CO_{H_9}(2-u)\right). 
\end{eqnarray}
In a word, there are two polyhedral regions 
\begin{align} 
\label{Region. M_4 W^ES F (u,v)}
\D_1^F 
&= \{(u,v)\mid 0\le u\le 1, 0\le v\le u/2\} \cup 
    \{(u,v)\mid 1\le u\le 2, u-1\le v \} \\
&= \{(u,v)\mid 0\le v\le u/2, u-1\le v \}, \nonumber\\
\D_2^F
&= \{(u,v)\mid 1\le u\le 2, 0\le v\le u-1\} \nonumber
\end{align}
in the $(u,v)$-plane such that the filtration is formulated as Equation~(\ref{Filtration F first}) when $(u,v)\in \D_1$ and as Equation~(\ref{Filtration F second}) when $(u,v)\in \D_2$. In the former case, we have
\begin{align*}
\vol\left(\CF_F^{(v)}W_{(1,u)}^{E_S}\right) 
&= \left((2-u)\tau_{E_S}^*\CO_{H_9}(1)-(1-u+v)F\right)^3 \\
&= (2-u)^3 \cdot 1
    -3(2-u)^2(1-u+v) \cdot 0 \\
& \quad +3(2-u)(1-u+v)^2 \cdot (-3)
    -(1-u+v)^3 \cdot (-10)      \\
&= (2-u)^3-9(2-u)(1-u+v)^2+10(1-u+v)^3,  
\end{align*}
where the second equality follows from Equation~(\ref{Intersection on E_S}). In the latter case, 
\begin{eqnarray*}
\vol\left(\CF_F^{(v)}W_{(1,u)}^{E_S}\right) 
= \CO_{H_9}(2-u)^3
= (2-u)^3. 
\end{eqnarray*}
Recall $\vol(W_{\bu\bu}^{E_S})=\vol(V_\bu)=\vol(\CO_M(1))=5$. Hence we have 
\begin{eqnarray}
\label{Computation: S(W^E_S;F)}
S(W_{\bu\bu}^{E_S}; F)
=\frac{1}{5/4!} 
\int_{\D_1^F\cup \D_2^F} 
\frac{\vol\left(\CF_F^{(v)}W_{(1,u)}^{E_S}\right)}{3!} dudv = \frac{1}{5}. 
\end{eqnarray}

Next we take the refinement of $W^{E_S}_{\bu\bu}$ by $F$, which is 
\begin{eqnarray*}
W_{(1,u,v)}^{F} = 
H^0\left(F, (1-u+v)h_F+(1+2u-5v)l_F\right) 
\end{eqnarray*}
for $(u,v)\in \D_1^F=\{(u,v):0\le v\le u/2,u-1\le v\}$. Hence
\begin{eqnarray*}
\CF^{(w)}_C W^F_{(1,u,v)} = 
H^0\left(F,(1-u+v-w)h_F +(1+2u-5v-w)l_F\right), 
\end{eqnarray*}
which is non-vanishing when $(u,v,w)\in \D^C$,  where 
\begin{eqnarray}
\label{Region. M_4 W^F C}
\D^C=\{(u,v,w)\mid 0\le v\le u/2, u-1\le v, 0\le w\le 1-u+v \}. 
\end{eqnarray}
Hence 
\begin{eqnarray}
\label{Computation: S(W^F; C)}
S(W_{\bu\bu\bu}^{F}; C)
= \frac{1}{5/4!}
\int_{\D^C} 
\frac{\vol\left(\CF_C^{(w)}W_{(1,u,v)}^{F}\right)}{2!} dudvdw 
= \frac{1}{5}. 
\end{eqnarray}
Combining Equations~(\ref{Computation: S(W^E_S; F')}), (\ref{Computation: S(W^R;Z)}), (\ref{Computation: S(W^E_S;F)}) and (\ref{Computation: S(W^F; C)}), we have 
$$\delta_{C_2}(M_4;\CO_M(1))
\ge 
\min\left\{
\frac{25}{9}, \frac{25}{7}
\right\} 
= \frac{25}{9}. $$
The proof is finished. 
\end{proof}

\section{The quintic Del Pezzo fivefolds}
\label{Section: quintic Del Pezzo fivefolds}

We find the delta minimizer of a quintic Del Pezzo fivefold in this section. 

\subsection{The automorphism group and delta invariant of a quintic Del Pezzo fivefold}
\label{Subsection: Aut of M_5}
Let $M= \Gr(2,V_5)\cap \{X_5+X_8=0\} \seq \IP^9$, which is a smooth quintic Del Pezzo fivefold. We shall omit $X_5$ and view $M$ as a subvariety of $\IP^8_{X_0,X_1,X_2,X_3,X_4,X_6,X_7,X_8,X_9}$. With the same argument as in the previous section, one can show that the endomorphism algebra
$\aut(M)\seq \sl_5$ is generated by
\begin{eqnarray*}
\left( \begin{array}{ccccc}
-2\lam & \lam_1 & \lam_2 & \lam_3 & \lam_4 \\
0 & a & b & f & g \\
0 & c & d & g & h \\
0 & p & q & \lam-a & -c \\
0 & q & r & -b & \lam-d 
\end{array} \right). 
\end{eqnarray*}
There are only two orbits on $M$ via the $\Aut(M)$-action, namely, the closed orbit 
\begin{eqnarray}
\label{Eqnarray. def. W}
W=\{X_4=X_5=X_6=X_7=X_8=X_9=0\}\cong \IP^3 \seq \IP^9 
\end{eqnarray}
and the open orbit $M\setminus W$; see \cite[Proposition 5.3]{PV99}. Let $E_W$ be the exceptional divisor of the blowup $\pi\colon \tilde{M}\to M$ of $M$ along $W$. Then we have 
\begin{eqnarray*}
\frac{A_M(E_W)}{S(-K_M;E_W)} 
= \frac{15}{16} < \frac{45}{46} =
\frac{A_M(E_S)}{S(-K_M;E_S)}, 
\end{eqnarray*}
where the second equality follows from \cite{Fuj17}. For the first one, we now state a proof. Using Schubert calculus, one can show that $c_1(N_{W/M})=0$ and $c_2(N_{W/M})=c_1^2$ (see Theorem~\ref{Theorem. c(N_W/M_5)}), where $c_1=c_1(\CO_W(1))$. Let $h=c_1(\CO_{E_W/W}(1))$. Then we have intersection numbers
\begin{eqnarray}
\label{Formula: Intersection theory on E_W: sigma_1 and h_1}
\left( \begin{array}{ccccc}
c_1^4 & c_1^3h & c_1^2h^2 & c_1h^3 & h^4 \\
0 & 1 & 0 & -1 & 0 
\end{array} \right). 
\end{eqnarray}
Hence we have 
$\vol_{\tilde{M}|E_W}(\CO_M(1)-tE_W)
=(c_1+t h)^4 
= 4t-4t^3$, and the pseudo-effective threshold of $E_W$ with respect to $\CO_M(1)$ is equal to $1$. We have 
\begin{eqnarray}
\label{Computation: S(O(1);E_W)}
S(\CO_M(1);E_W) 
= \frac{n}{V}\int_0^1 t\cdot 
(4t-4t^3) \cdot dt 
\,=\, \frac{8}{15}, 
\end{eqnarray}
where $V= \vol(\CO_M(1)) =5$ and $n=\dim M =5$. 
We conclude by $-K_M=\CO_M(4)$ and $A_M(E_W)=2$. 
Furthermore, we have the following. 

\begin{thm}
The delta invariant of\, $M=M_5$ is $\delta(M_5) = \frac{15}{16}$ and is minimized by $E_W$. 
\end{thm}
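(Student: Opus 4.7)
The upper bound $\delta(M_5)\le \frac{15}{16}$ has effectively already been established, via the computation $\frac{A_{M_5}(E_W)}{S(-K_{M_5};E_W)}=\frac{15}{16}$. My plan for the lower bound follows the same two-step Abban--Zhuang strategy as the proof of Theorem \ref{Theorem E_S minimize delta M}: it suffices to show $\delta_W(M_5,\CO_M(1))\ge \frac{15}{4}$.

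First, by \cite{LXZ21, Zhu21}, there exists an $\Aut(M_5)$-invariant prime divisor computing $\delta(M_5)$. Since $\Aut(M_5)$ has $W$ as its unique closed orbit on $M_5$ (see Section \ref{Subsection: Aut of M_5}), the center of this divisor must contain $W$, and so $\delta(M_5,\CO_M(1))=\delta_W(M_5,\CO_M(1))$. I then apply the strengthened Abban--Zhuang estimate (Theorem \ref{Theorem: Abban-Zhuang estimate}) with the plt-type divisor $F=E_W$ over $M_5$:
\begin{eqnarray*}
\delta_W(M_5,\CO_M(1))\ \ge\ \min\Big\{\frac{A_{M_5}(E_W)}{S(\CO_M(1);E_W)},\ \delta_W(E_W;W^{E_W}_{\bullet\bullet})\Big\},
\end{eqnarray*}
where $W^{E_W}_{\bullet\bullet}$ denotes the refinement of the section ring $R(M_5,\CO_M(1))$ by $E_W$. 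By (\ref{Computation: S(O(1);E_W)}) and $A_{M_5}(E_W)=2$, the first term equals $2/(8/15)=\frac{15}{4}$, so the proof reduces to showing $\delta_W(E_W;W^{E_W}_{\bullet\bullet})\ge\frac{15}{4}$.

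To handle this, I need an explicit description of $W^{E_W}_{(1,u)}$ as a linear series on the $\IP^1$-bundle $E_W=\IP(N_{W/M}^\vee)\to W\cong \IP^3$. Just as in the $M_4$ case the Sarkisov link $\tilde M_4\to \IP^4$ supplied the Zariski decomposition of $\pi^*\CO_M(1)-uE_S$, I would produce the analogous Sarkisov link starting from $\pi:\tilde M_5\to M_5$: the second extremal contraction $\tau:\tilde M_5\to Y$ sends an exceptional divisor $\tilde D\subset\tilde M_5$ to a lower-dimensional locus, providing a Zariski decomposition of $\pi^*\CO_M(1)-uE_W$ past a critical value of $u$. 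Volume calculations on $E_W$ are governed by the Grothendieck relation $h^2=-\sigma_1^2$, which follows from $c_1(N_{W/M})=0$ and $c_2(N_{W/M})=\sigma_1^2$ and is consistent with the intersection table (\ref{Formula: Intersection theory on E_W: sigma_1 and h_1}).

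To finish, I iterate Abban--Zhuang on $E_W$. The action of $\Aut(M_5)$ on $W=\IP(V_5/\IC e_0)\cong \IP^3$ factors through $\mathrm{PGSp}(4)$ preserving the symplectic form induced on $V_4=V_5/\IC e_0$, and lifts to $E_W$ through its natural action on the rank-$2$ bundle $N_{W/M}$. The most natural $\Aut(M_5)$-invariant divisor on $E_W$ is $F':=\tilde D\cap E_W$, and there may be a lower-dimensional $\mathrm{PGSp}(4)$-invariant stratum on $E_W$ to refine by afterwards, mirroring the use of the exceptional divisor $F$ and the curve $C$ in the proof of Theorem \ref{Theorem E_S minimize delta M}. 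The main obstacle lies in this explicit identification of the link and the bookkeeping of the iterated refined linear series: one must verify that each successive $S$-invariant (namely $S(W^{E_W}_{\bullet\bullet};F')$ and any further refined $S$-invariant) is at most $\frac{4}{15}$, which would push every branch of the Abban--Zhuang estimate above $\frac{15}{4}$ and complete the proof.
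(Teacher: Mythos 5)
Your reduction is set up correctly and agrees with the paper up to the halfway point: by \cite{LXZ21,Zhu21} and the orbit structure of $\Aut(M_5)$ one has $\delta(M_5)=\delta_W(M_5,\CO_M(1))$, the Abban--Zhuang estimate with the plt divisor $E_W$ gives the first term $A_{M_5}(E_W)/S(\CO_M(1);E_W)=\tfrac{15}{4}$, and the whole problem becomes $\delta_W(E_W;W^{E_W}_{\bu\bu})\ge\tfrac{15}{4}$. But that last inequality is precisely the content of the theorem beyond the (already known) upper bound, and your proposal does not prove it: you only name a candidate divisor $F'=\tilde D\cap E_W$ to refine by, conjecture a further invariant stratum, and defer the verification that "each successive $S$-invariant is at most $\tfrac{4}{15}$." None of these $S$-invariants is computed, the chain of refinements is not shown to terminate, and the auxiliary geometric input you plan to extract (a second extremal contraction of $\tilde M_5$ producing a Zariski decomposition of $\pi^*\CO_M(1)-uE_W$ past a critical value of $u$) does not in fact exist in the form you expect: $\CO_M(1)-tE_W$ is ample for all $0<t<1$ and fails to be big for $t>1$, so the nef and pseudoeffective thresholds coincide and the refinement is simply $W^{E_W}_{(1,t)}=(1-t)\sigma_1+t(\sigma_1+h)$ with no negative part. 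So as written this is a genuine gap: the quantitative core of the lower bound is missing.

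The paper closes this step by a shorter argument that avoids iterated refinements altogether. Since $E_W\to W\cong\IP^3$ is a $\IP^1$-bundle, the only divisorial valuations on $E_W$ whose center dominates $W$ are the prime divisors $Z\seq E_W$ with $Z\sim a\sigma_1+bh$, $1\le b\le a$; using the intersection table (\ref{Formula: Intersection theory on E_W: sigma_1 and h_1}) one computes directly
\begin{equation*}
S(W^{E_W}_{\bu\bu};Z)=\frac{1}{6a}\le\frac{1}{6}<\frac{4}{15},
\end{equation*}
uniformly in $Z$, which gives $\delta_W(E_W;W^{E_W}_{\bu\bu})\ge 6>\tfrac{15}{4}$ in one stroke. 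If you want to salvage your iterated-refinement plan you would have to carry out the analogue of the $M_4$ computation in full (identify $F'$, compute $S(W^{E_W}_{\bu\bu};F')$ and the next-stage refined series, and check termination of the chain of valuations dominating $W$), but the uniform bound over all multisections $Z$ of the $\IP^1$-bundle is both simpler and already sufficient.
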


\begin{proof}
Let $W^{E_W}_{\bu\bu}$ be the refinement of $R_\bu=R(M,\CO(1))$ by $E_W$. By equivariant K-stability and Theorem~\ref{Theorem: Abban-Zhuang estimate},  it suffices to show that 
\begin{eqnarray*}
S(W_{\bu\bu}^{E_W};Z) \le \frac{4}{15}
\end{eqnarray*}
for any prime divisor $Z\seq E_W$ dominating $W$. 
We firstly see that $\CO_M(1)-tE_W$ is ample for $0<t<1$ and is not pseudo-effective for $t>1$. Hence the $E_W$-refinement of $\CO_M(1)$ is
$$W_{(1,t)}=W^{E_W}_{(1,t)}:= (\CO_M(1)-tE_W)|_{E_W}= \CO_W(1)+\CO_{E_W/W}(t)$$ 
for $0<t<1$, which is always ample. So $\vol(W_{(1,t)})=(\sigma_1+t h)^4 = 4t-4t^3. $
Since $\pi\colon  E_W\to W$ is a $\IP^1$-bundle and $Z\seq E_W$ dominates $W$, we have $Z\sim \CO_W(a)+\CO_{E_W/W}(b)$ for $1\le b \le a$. Then we have 
\begin{equation*}
S(W_{\bu\bu}^{E_W};Z) 
= \frac{1}{5/5!}
\left(
\int_{0}^{\frac{b}{a}} \int_0^{\frac{t}{b}} 
\frac{\vol(W_{(1,t)}-sZ)}{4!} ds dt, + 
\int_{\frac{b}{a}}^1 \int_0^{\frac{1-t}{a-b}} 
\frac{\vol(W_{(1,t)}-sZ)}{4!} ds dt 
\right) 
 =  \frac{1}{6a} \, \le \frac{1}{6}. \qedhere
\end{equation*}
\end{proof}

We are also interested in the pairs $(M, Q)$ for $Q\in |\CO_M(2)|$, in particular for those $Q$ with large symmetries. They are closely related to GM varieties. 
For $M=M_4$, there is a quadric divisor $Q_0$ (defined by $X_1^2-4X_0X_4$) such that the subgroup $\Aut(M_4,Q_0)\seq \Aut(M_4)$ is reductive; see Equation~(\ref{Matrix: Aut(M_4,Q_0)}).  
For $M=M_5$, we consider the quadric divisor $Q_0$ defined by $\{X_0X_2+X_1X_3=0\}$. Then the endomorphism algebra $\ng$ of the pair $(M, Q_0)$ is generated by 
\begin{eqnarray}
\label{Matrix: Aut(M_5,Q_0)}
\left( \begin{array}{ccccc}
-2\lam & 0 & 0 & 0 & 0 \\
0 & a & b & 0 & 0 \\
0 & c & d & 0 & 0 \\
0 & 0 & 0 & \lam-a & -c \\
0 & 0 & 0 & -b & \lam-d 
\end{array} \right). 
\end{eqnarray}

\begin{lem}
\label{Lemma. center of aut(M_5,Q_0)}
The center of $\ng$ is generated by $\diag(-2,-2,-2,3,3)$ and $\diag(-4,1,1,1,1)$. 
\end{lem}

We use the same notation $Q_x$ and $P_x(t)=\exp(-tQ_x)$ as in Section~\ref{Subsection: Aut of M_4}. Note that $M$ admits an action of $\IT=\IG_m^3$ generated by $P_\lam$, $P_a$ and $P_d$, whose actions on $\IP\cong \IP^8$ are given by
\begin{align*}
P_\lam(t) \cdot [X_i] 
&= [t^{2}X_0,t^{2}X_1,t^{1}X_2,t^{2}X_3,X_4,t^{-1}X_6,t^{-1}X_7,t^{-1}X_8,t^{-2}X_9],\\
P_a(t) \cdot [X_i] 
&= [t^{-1}X_0,X_1,tX_2,X_3,t^{-1}X_4,t^{-1}X_6,tX_7,X_8,tX_9], \\
P_d(t) \cdot [X_i] 
&= [X_0,t^{-1}X_1,X_2,tX_3,t^{-1}X_4,tX_6,t^{-1}X_7,X_8,tX_9]. 
\end{align*}
We denote by $\eta, \varsigma, \zeta$ the weights of the one-parameter subgroups $P_\lam$, $P_a$, $P_d$, respectively; that is,  
\begin{align*}
\eta &= (2,2,1,1,0,-1,-1,-1,-2), \\
\varsigma &= (-1,0,1,0,-1,-1,1,0,1), \\ 
\zeta &= (0,-1,0,1,-1,1,-1,0,1). 
\end{align*}

Let $G=\Aut(M, Q_0)$. Then the generic orbit of the $G$-action on $M$ is open. We set $Q_{0,W} = Q_0\cap W$.  Note that $W$ is $G$-invariant and has $G$-orbits $Q_{0,W}\setminus (l_{01}\cup l_{23})$, $l_{01}=\{X_0=X_1=0\}$, $l_{23}=\{X_2=X_3=0\}$. Let $K=\{X_0=X_1=X_2=X_3=0\} \seq M$; then $K$ is isomorphic to the quadric threefold ${Q^3 = \{Y_4Y_9+Y_8^2+Y_6Y_7=0\}}\seq \IP^4_{Y_4,Y_6,Y_7,Y_8,Y_9}$. Note that $K\seq Q_0$  is $G$-invariant. The hyperplane sections $H^Q_4=\{Y_4=0\}$, $H^Q_9=\{Y_9=0\}\seq K$ are both cones over the conic $q=H^Q_4\cap H^Q_9= \{Y_6Y_7+Y_8^2=0\}\seq \IP^2_{Y_6,Y_7,Y_8}$, with vertices $o_4$, $o_9$, respectively. 
We see that $l_{23} \seq M$ is one of the minimal $G$-orbits. 

\subsection{Properties of quintic Del Pezzo fivefolds}
\label{The properties of Del Pezzo fivefolds}
We make some preparations for computing the delta invariant $\delta(M_5, cQ_0)$ for suitable $c$. Note that $\Aut(M,Q_0)$ is generated by the one-parameter subgroups with weights $-\frac{1}{5}(2\eta+\varsigma+\zeta)$ and $-\frac{1}{5}(\eta-2\varsigma-2\zeta)$ and an $\SL_2$. We consider the toric divisors corresponding to these two one-parameter subgroups, which are $E_W$ and $E_S$, respectively. 

The blowup $\tilde{M}\to M$ naturally embeds in $M\times \IP^4_{Y_4,Y_6,Y_7,Y_8,Y_9}$. Let $\tau\colon \tilde{M}\to \IP^4$ be the natural projection; we have $\tau^*\CO_{\IP^4}(1) \sim \pi^*\CO_M(1)-E_W$. Hence we get the following
bi-homogeneous equations defining $E_W\seq W \times \IP^4_{Y_4,Y_6,Y_7,Y_8,Y_9}$: 
\begin{eqnarray}
\label{Eqnarray. E_W defining equation}
\left\{ \begin{array}{ll}
X_0Y_7+X_1Y_8+X_2Y_4=0, \\
X_0Y_8-X_1Y_6+X_3Y_4=0, \\
X_0Y_9-X_2Y_6-X_3Y_8=0, \\
X_1Y_9-X_2Y_8+X_3Y_7=0.
\end{array} \right.
\end{eqnarray}
If we consider these as equations in $X_0$, $X_1$, $X_2$, $X_3$, then the determinant of the coefficients is $(Y_4Y_9+Y_8^2+Y_6Y_7)^2$. Hence the restriction $\tau_W\colon  E_W \to \IP^4$ is a $\IP^1$-bundle over $Q^3=\{Y_4Y_9+Y_8^2+Y_6Y_7=0\}\seq \IP^4$. One may also consider the blowup of $M$ along $K$ with exceptional divisor $E_K$. Then $E_W$ and $E_K$ are isomorphic over $Q^3$ (as can be seen by comparing the defining equations). 
The inverse image $L_{23}=\pi^{-1}(l_{23})$ is defined by 
\begin{eqnarray*}
\left\{ \begin{array}{ll}
X_0Y_7+X_1Y_8=0, \\
X_0Y_8-X_1Y_6=0 
\end{array} \right. \seq
\,\, l_{23} \times \IP^3_{Y_4,Y_6,Y_7,Y_8}, 
\end{eqnarray*}
which is contained in $\tQ_{0,W}=\pi^{-1}(Q_{0,W})$. 
Hence $L_{23}\cong \IF_2$, and the image of $\tau_{L_{23}}$ is just $H^Q_9\cong \IP(1,1,2)$. The line $l=\{Y_6=Y_7=Y_8=0\}\seq L_{23}$ is contracted by $\tau_{L_{23}}$ to the vertex $o_9$. 
Note that $L_{23}$ is contained in the pull-back $H_9=\tau^{-1}(H_9^Q)\seq W \times \IP^3_{Y_4,Y_6,Y_7,Y_8}$, which is defined by  
\begin{eqnarray*}
\left\{ \begin{array}{ll}
X_0Y_7+X_1Y_8+X_2Y_4=0, \\
X_0Y_8-X_1Y_6+X_3Y_4=0, \\
-X_2Y_6-X_3Y_8=0, \\
-X_2Y_8+X_3Y_7=0.
\end{array} \right.
\end{eqnarray*}

Let $\tE_W\to E_W$ be the blowup of $E_W$ along $l$ with exceptional divisor $E_l\cong l\times \IP^2$. Then the morphism $\tau_W\colon E_W\to Q^3$ lifts to $\tilde{\tau}_W\colon  \tE_W \to \tQ^3$, where $\tQ^3$ is the blowup of $Q^3$ at $o_9$. And the restriction of $\tilde{\tau}_W$ to $E_l$ is just the second projection. Let $\tL_{23},\tilde{\tQ}_{0,W} \seq \tE_W$ be the strict transforms of $L_{23}$ and $\tQ_{0,W}$, respectively. Then the curve $C=\tL_{23}\cap E_l \seq E_l$ is defined as 
\begin{eqnarray*}
\left\{ \begin{array}{ll}
X_0\tY_7+X_1\tY_8=0, \\
X_0\tY_8-X_1\tY_6=0 
\end{array} \right. \seq
\,\, l \times \IP^2_{\tY_6,\tY_7,\tY_8} = E_l, 
\end{eqnarray*}
which is contained in the surface $R=\tilde{\tQ}_{0,W}\cap E_l=\{\tY_6\tY_7+\tY_8^2=0\}\seq E_l$. 
 
\begin{rmk} 
The valuation $\ord_{E_l}$ (over $E_W$) is just the restriction of $\ord_{E_S}$ (over $M$) to $E_W$. 
\end{rmk} 

Finally, let $\tilde{\tE}_W\to \tE_W$ be the blowup of $\tE_W$ along $\tL_{23}$ with exceptional divisor $E_{\tL_{23}}$. Then the strict transform $\tE_l$ of $E_l$ is the blowup of $E_l$ along $C$ with exceptional divisor $E_C=\tE_l\cap E_{\tL_{23}}$. We shall denote by $\tR\seq \tE_l$ the strict transform of $R$ and write $\tC = \tR\cap E_C$. 

Choosing $\lam=0$ and $d=-a$ in Equation~(\ref{Matrix: Aut(M_5,Q_0)}), we see that $G=\Aut(M,Q_0)$ has a subgroup $G_0\cong \SL_2$ generated by 
\begin{eqnarray}
\label{Matrix: Aut(M_5,Q_0)-2}
P=
\left( \begin{array}{ccc}
1 & 0 & 0  \\
0 & A & 0  \\
0 & 0 & (A^T)^{-1}  
\end{array} \right) 
=
\left( \begin{array}{ccccc}
1 & 0 & 0 & 0 & 0 \\
0 & a & b & 0 & 0 \\
0 & c & d & 0 & 0 \\
0 & 0 & 0 & d & -c \\
0 & 0 & 0 & -b & a 
\end{array} \right), 
\end{eqnarray}
for
$A=\begin{psmallmatrix}
a & b  \\
c & d  
\end{psmallmatrix}  \in \SL_2$.
  The $G_0$-action on $M$ lifts to $E_W$, $E_l$ and $\tE_l$ successively. Since the $G_0$-action on $\IP^9_{X_0,\dots,X_9}$ is given by $\wedge^2 P$,  the $G_0$-action on $E_W\seq \IP^3_{X_0,X_1,X_2,X_3} \times \IP^4_{Y_4,Y_6,Y_7,Y_8,Y_9}$ is given by 
\begin{eqnarray}
\label{Eqnarray. matrix E_W}
\left(\left( \begin{array}{cccc}
 a & b & 0 & 0 \\
 c & d & 0 & 0 \\
 0 & 0 & d & -c \\
 0 & 0 & -b & a 
\end{array} \right), 
\left( \begin{array}{ccccc}
1 & 0 & 0 & 0 & 0 \\
0 & a^2 & -b^2 & 2ab & 0 \\
0 & -c^2 & d^2 & -2cd & 0 \\
0 & ac & -bd & ad+bc & 0 \\
0 & 0 & 0 & 0 & 1 
\end{array} \right) \right). 
\end{eqnarray}
Hence the $G_0$-action on $E_l = \IP^1_{X_0,X_1} \times \IP^2_{\tY_6, \tY_7, \tY_8}$ is given by 
\begin{eqnarray}
\label{Eqnarray. matrix E_l}
\left(\left( \begin{array}{cc}
 a & b \\
 c & d \\
\end{array} \right), 
\left( \begin{array}{ccc}
 a^2 & -b^2 & 2ab  \\
 -c^2 & d^2 & -2cd  \\
 ac & -bd & ad+bc  \\
\end{array} \right) \right). 
\end{eqnarray}
We may directly check that the $G_0$-action on $E_l$ has orbit decomposition  
\begin{eqnarray}
\label{Eqnarray. E_l orbit decomposition}
E_l= (E_l\setminus R) \sqcup (R\setminus C) \sqcup C.
\end{eqnarray}
Moreover, the $G_0$-action on $\tE_l = \Bl_C E_l$ has orbit decomposition  
\begin{eqnarray}
\label{Eqnarray. tE_l orbit decomposition}
\tE_l= (\tE_l\setminus (\tR\cup E_C)) \sqcup (\tR\setminus \tC) \sqcup (E_C \setminus \tC) \sqcup \tC. 
\end{eqnarray}

\subsection{Computing $\boldsymbol{S}$-invariants over $\boldsymbol{E_W}$}
We will compute the delta invariant of $(M,cQ_0)$ in the last section for suitable $c$. By $G$-equivariant K-stability, it suffices to compute $\delta_{l_{23}}(M,cQ_0)$.
We take the refinement of $R_\bu=R(M,\CO(1))$ by $E_W$, $E_l$ and $E_C$ successively and get multi-graded linear series $W^{E_W}_{\bu\bu}$, $W^{E_l}_{\bu\bu\bu}$, $W^{E_C}_{\bu\bu\bu\bu}$, respectively. 

We set $L=\pi^*\CO_W(1)$ and $H=\tau^*\CO_{Q^3}(1)$; then $L+\CO_{E_W/W}(1)=H$. Let $\alpha \seq \tL_{23} \seq \tE_W$ be the strict transform of a fiber of $L_{23}\to l_{23}$, and $\beta \seq \tH_9$ be a curve intersecting $l$ transversally at a point such that it is mapped to a ruling of $H^Q_9=\IP(1,1,2)$ isomorphically via $\tau$, and mapped to a line in $W$ isomorphically via~$\pi$. Then $E_l\cdot \alpha=E_l\cdot \beta=1$ and 
\begin{eqnarray*}
\left( \begin{array}{ccccc}
L\cdot\alpha & H\cdot\alpha & L\cdot\beta & H\cdot\beta \\
0 & 1 & 1 & 1 
\end{array} \right). 
\end{eqnarray*}
Recall that $W^{E_W}_{(1,t)}=L+\CO_{E_W/W}(t)=(1-t)L+tH$. Then 
\begin{align*}
\alpha \cdot (W^{E_W}_{(1,t)}-sE_l) 
&= t-s, \\
\beta \cdot (W^{E_W}_{(1,t)}-sE_l) 
&= 1-s. 
\end{align*}
Hence $W^{E_W}_{(1,t)}-sE_l$ is ample for $0<s<t$. We set $\tL=L-E_l$ and $\tH=H-E_l$, where $\tH$ is semiample but $\tL$ has base locus $\tL_{23}$. If $t<s<1$, then $W^{E_W}_{(1,t)}-sE_l$ has asymptotic base locus $\tL_{23}$. Blowing it up, we get $\ttE_W$, and $\ttL=\tL-E_{\tL_{23}}$ is semiample. 
Consequently, we have 
\begin{eqnarray*}
W_{(1,t)}^{E_W} - sE_l = 
\left\{ \begin{array}{ll}
H^0\left(\tE_W, (1-t)L+(t-s)H+s\tH \right),
& 0\le s \le t,\\
H^0\left(\ttE_W, (s-t)\tilde{\tL}+(1-s)L+t\tH \right) + (s-t) E_{\tL_{23}},
& t\le s\le 1, \\
H^0\left(\ttE_W, (1-t)\tilde{\tL}+(1+t-s)\tH \right) + (s-t) E_{\tL_{23}} + (s-1)\ttH_9,
& 1\le s\le 1+t. \\
\end{array} \right.
\end{eqnarray*}
By Equation~(\ref{Formula: Intersection theory on E_W: sigma_1 and h_1}), we have intersection numbers 
\begin{eqnarray}
\label{Formula: Intersection theory on E_W: L and H}
\left( \begin{array}{ccccc}
L^4 & L^3H & L^2H^2 & LH^3 & H^4 \\
0 & 1 & 2 & 2 & 0 
\end{array} \right), \quad 
\left( \begin{array}{cccccc}
L\tH^3 & L^2\tH^2 & L^3\tH & L\tL^3 & L^2\tL^2 & L^3\tL \\
 1 & 2 & 1 & -1 & 0 & 0 
\end{array} \right). 
\end{eqnarray}
Note that $N_{\tL_{23}/\tE_W}\cong \tL^{\oplus2}|_{\tL_{23}}$ and $[\tL_{23}]=[\tL]^2$ in the Chow ring;  we have intersection numbers 
\begin{eqnarray}
\label{Formula: Intersection theory on tE_W: ttL and tH}
\left( \begin{array}{ccccc}
\tilde{\tL}^4 & \tilde{\tL}^3\tH & \tilde{\tL}^2\tH^2 & \ttL\tH^3 & \tH^4 \\
0 & 0 & 0 & 1 & 0 
\end{array} \right), \quad
\left( \begin{array}{cccccc}
\tilde{\tL}^3L & \tilde{\tL}^2L^2 & \ttL L^3 & \tilde{\tL}^2\tH L & \tilde{\tL}\tH^2L & \tilde{\tL} \tH L^2\\
 0 & 0 & 0 & 0 & 1 & 1 
\end{array} \right). 
\end{eqnarray}
Hence 
\begin{eqnarray*}
\vol(W_{(1,t)}^{E_W} - sE_l) = 
\left\{ \begin{array}{ll}
4(1-t)((1-t)^2t+3(1-t)t^2+2t^3-s^3),
& 0\le s \le t,\\
4(s-t)(3(1-s)^2t+3(1-s)t^2+t^3) &\\
+4(1-s)t((1-s)^2+3(1-s)t+t^2),
& t\le s\le 1, \\
4(1-t)(1+t-s)^3,
& 1\le s\le 1+t, 
\end{array} \right. 
\end{eqnarray*}
\begin{eqnarray}
\label{Computation: S(W^E_W;E_l)}
S(W_{\bu\bu}^{E_W}; E_l)
= \frac{1}{5/5!} 
\int_0^1 \int_0^{1+t} \frac{\vol(W_{(1,t)}^{E_W}-sE_l)}{4!} dsdt 
\,=\, \frac{23}{30}. 
\end{eqnarray}

Next we compute $W^{E_l}_{\bu\bu\bu}$ and $S(W^{E_l}_{\bu\bu\bu}; E_C)$. Recall that $E_l \cong l\times \IP^2$. We set $L_E=\pr_1^*\CO_l(1)$ and $H_E=\pr_2^*\CO_{\IP^2}(1)$. Then $L|_{E_l}=L_E$, $H|_{E_l}=0$ and $E_l|_{E_l}=-H_E$. Moreover, $\tH_9|_{E_l}=R$ is a trivial $\IP^1$ bundle over the conic $q\seq\IP^2$, and $\tL_{23}|_{E_l}=C\seq R$. 
The strict transform $\tE_l$ of $E_l$ on $\ttE_W$ is the blowup of $E_l$ along $C$ with exceptional divisor $E_C=E_{\tL_{23}}|_{\tE_l}$. We also have $\ttH_9|_{\tE_l} = \tR$ and $\tR\cap E_C = \tC$. In a word, we have 
\begin{eqnarray}
\label{Formula: restriction of subvarieties of ttE_W to tE_l}
\left( \begin{array}{ccccc}
L & H & E_l & \tH_9 & \tL_{23} \\
L_E & 0 & -H_E & R & C 
\end{array} \right), \quad 
\left( \begin{array}{cc}
E_{\tL_{23}} & \ttH_9 \\
E_C & \tR
\end{array} \right), 
\end{eqnarray}
where the first rows consist of divisors and subvarieties of $\tE_W$ and $\ttE_W$, and the second rows consist of their restrictions to $E_l$ and $\tE_l$, respectively. 
Since $C$ is the complete intersection of two divisors in $|L_E+H_E|$, we set $\tB_E=L_E+H_E-E_C$. Hence
\begin{eqnarray*}
W_{(1,t,s)}^{E_l} = 
\left\{ \begin{array}{ll}
H^0\left(E_l, (1-t)L_E+sH_E \right), 
& 0\le s\le t,\\
H^0\left(\tE_l, (s-t)\tB_E+(1-s)L_E+tH_E \right) + (s-t) E_C,
& t\le s\le 1, \\
H^0\left(\tE_l, (1-t)\tB_E+(1+t-s)H_E \right) + (s-t) E_C + (s-1)\tR,
& 1\le s\le 1+t. \\
\end{array} \right.
\end{eqnarray*}
We may compute $W_{(1,t,s)}^{E_l}-rE_C$: 
\begin{enumerate}
\item If $0\le s\le t, s+t\le 1$, then
\begin{eqnarray*}
W_{(1,t,s)}^{E_l}-rE_C = 
\left\{ \begin{array}{ll}
H^0\left(\tE_l, 
r\tB_E + (1-t-r)L_E + (s-r)H_E \right),
& 0\le r\le s. 
\end{array} \right.
\end{eqnarray*}

\item If $0\le s\le t, s+t\ge1$, then 
\begin{eqnarray*}
W_{(1,t,s)}^{E_l}-rE_C = 
\left\{ \begin{array}{ll}
H^0\left(\tE_l, 
r\tB_E + (1-t-r)L_E + (s-r)H_E \right), 
& 0\le r\le 1-t,\\
H^0\left(\tE_l, 
(1-t)\tB_E + (1-t+s-2r)H_E 
\right) + (-1+t+r) \tR,
& 1-t\le r \le \frac{1}{2}(1-t+s). 
\end{array} \right.
\end{eqnarray*}

\item If $t\le s \le 1, s+t\le 1$, then 
\begin{eqnarray*}
W_{(1,t,s)}^{E_l}-rE_C = 
\left\{ \begin{array}{ll}
H^0\left(\tE_l, 
(s-t)\tB_E + (1-s)L_E+tH_E
\right) + (-t+s-r)E_C,
& 0\le r\le s-t, \\
H^0\left(\tE_l, 
r\tB_E + (1-t-r)L_E + (s-r)H_E \right), 
& s-t \le r \le s. 
\end{array} \right.
\end{eqnarray*}

\item If $t\le s\le 1, s+t\ge1$, then 
\begin{eqnarray*}
W_{(1,t,s)}^{E_l}-rE_C = 
\left\{ \begin{array}{ll}
H^0\left(\tE_l, 
(s-t)\tB_E + (1-s)L_E+tH_E
\right) + (-t+s-r)E_C,
& 0\le r\le s-t, \\
H^0\left(\tE_l, 
r\tB_E + (1-t-r)L_E + (s-r)H_E \right), 
& s-t\le r\le 1-t,\\
H^0\left(\tE_l, 
(1-t)\tB_E + (1-t+s-2r)H_E 
\right) + (-1+t+r) \tR,
& 1-t\le r \le \frac{1}{2}(1-t+s). 
\end{array} \right.
\end{eqnarray*}

\item If $1\le s\le 1+t$, then 
\begin{eqnarray*}
W_{(1,t,s)}^{E_l}-rE_C = 
\left\{ \begin{array}{ll}
H^0\left(\tE_l, 
(1-t)\tB_E + (1+t-s)H_E
\right) & \\
+ (-t+s-r)E_C + (s-1)\tR,
& 0\le r\le s-t, \\
H^0\left(\tE_l, 
(1-t)\tB_E + (1-t+s-2r)H_E 
\right) + (-1+t+r) \tR,
& s-t \le r\le \frac{1}{2}(1-t+s). 
\end{array} \right.
\end{eqnarray*}
\end{enumerate}
In conclusion, we have
\begin{eqnarray*}
W_{(1,t,s)}^{E_l}-rE_C = 
\left\{ \begin{array}{ll}
H^0\left(\tE_l, 
r\tB_E + (1-t-r)L_E + (s-r)H_E \right), 
& (r,s,t) \in \D_A,\\
H^0\left(\tE_l, 
(1-t)\tB_E + (1-t+s-2r)H_E 
\right) + (-1+t+r) \tR,
& (r,s,t)\in \D_B, \\
H^0\left(\tE_l, 
(s-t)\tB_E + (1-s)L_E+tH_E
\right) + (-t+s-r)E_C, 
& (r,s,t)\in \D_{C_1}, \\
H^0\left(\tE_l, 
(1-t)\tB_E + (1+t-s)H_E
\right) & \\
+ (-t+s-r)E_C + (s-1)\tR, 
& (r,s,t)\in \D_{C_2}, 
\end{array} \right.
\end{eqnarray*}
where (always assume $0\le t\le 1$)
\begin{align*}
\D_A 
&= \{0\le s\le t, s+t\le 1, 0\le r\le s\} \\
&\quad  \cup \{0\le s\le t, s+t\ge 1, 0\le r\le 1-t\} \\
&\quad  \cup \{t\le s\le 1, s+t\le 1, s-t\le r\le s\} \\
& \quad  \cup\{t\le s\le 1, s+t\ge 1, s-t\le r\le 1-t\},  \\
\D_B 
&= \{0\le s\le t, s+t\ge 1, 1-t\le r\le \frac{1}{2}(1-t+s)\} \\
&\quad  \cup \{t\le s\le 1, s+t\ge 1, 1-t\le r\le \frac{1}{2}(1-t+s)\} \\
&\quad   \cup\{1\le s\le 1+t, s-t\le r\le \frac{1}{2}(1-t+s)\}, \\
\D_{C_1} 
&= \{t\le s\le 1, 0\le r\le s-t\}, \\
\D_{C_2} 
&= \{1\le s\le 1+t, 0\le r\le s-t\}.  
\end{align*}
Since $N_{C/E_{l}}\cong (L_E+H_E)^{\oplus 2}$, we have intersection numbers
\begin{eqnarray}
\label{Formula: Intersection theory on tE_l: E_C and tLH_E}
\left( \begin{array}{ccccc}
E_C^3 & E_C^2L_E & E_C^2H_E  \\
-6 & -1 & -2  
\end{array} \right), \quad 
\left( \begin{array}{ccccc}
\tB_E L_E^2 & 
\tB_E L_E H_E & 
\tB_E H_E^2 & 
\tB_E^2 \\
 0 & 1 & 1 & 0
\end{array} \right). 
\end{eqnarray}
Hence
\begin{eqnarray*}
\vol(W_{(1,t,s)}^{E_l}-rE_C) = 
\left\{ \begin{array}{ll}
6r(1-t-r)(s-r) + 3(1-t)(s-r)^2,
& (r,s,t) \in \D_A,\\
3(1-t)(1-t+s-2r)^2,
& (r,s,t)\in \D_B, \\
6(s-t)(1-s)t+3(1-t)t^2,
& (r,s,t)\in \D_{C_1}, \\
3(1-t)(1+t-s)^2,
& (r,s,t)\in \D_{C_2},  
\end{array} \right.
\end{eqnarray*}
\begin{eqnarray}
\label{Computation: S(W^E_l;E_C)}
S(W_{\bu\bu\bu}^{E_l}; E_C)
= \frac{1}{5/5!} 
\int_{\D_A\cup\D_B\cup\D_{C_1}\cup\D_{C_2}} 
\frac{\vol(W_{(1,t,s)}^{E_l}-rE_C)}{3!}  drdsdt 
\,=\, \frac{154}{405}. 
\end{eqnarray}

Finally, we compute $W^{E_C}_{\bu\bu\bu\bu}$ and $S(W^{E_C}_{\bu\bu\bu\bu}; \tC)$. 
Recall that $E_C\cong C\times \IP^1$ is a trivial bundle. We set $l_E=\pr_1^*\CO_C(1)$ and $h_E=\pr_2^*\CO_{\IP^1}(1)$. Then $L_E|_{E_C}=l_E$, $H_E|_{E_C}=2l_E$, $E_C|_{E_C} = -h_E +3l_E$ and $\tB_E|_{E_C}=h_E$. We also have $\tR|_{E_C}=\tC$. In a word, we have 
\begin{eqnarray}
\label{Formula: restriction of subvarieties of tE_l to E_C}
\left( \begin{array}{ccccc}
L_E & H_E & E_C & \tB_E & \tR \\
l_E & 2l_E & -h_E+3l_E & h_E & \tC 
\end{array} \right). 
\end{eqnarray}
Then by the above formula of $W_{(1,t,s)}^{E_l}-rE_C$, we see that
\begin{eqnarray*}
W_{(1,t,s,r)}^{E_C} = 
\left\{ \begin{array}{ll}
H^0\left(\tE_C, 
rh_E + (1-t+2s-3r)l_E \right), 
& (r,s,t) \in \D_A,\\
H^0\left(\tE_C, 
(1-t)h_E + 2(1-t+s-2r)l_E 
\right) + (-1+t+r) \tC,
& (r,s,t)\in \D_B. 
\end{array} \right.
\end{eqnarray*}
Moreover, 
\begin{eqnarray*}
W_{(1,t,s,r)}^{E_C}- u\tC = 
\left\{ \begin{array}{ll}
H^0\left(E_C, 
(r-u)h_E + (1-t+2s-3r-u)l_E\right), 
& (u,r,s,t) \in \D^{\tC}_{AA}, \\
H^0\left(\tE_C, 
(1-t)h_E + 2(1-t+s-2r)l_E
\right) & \\ 
+ (-1+t+r-u)\tC, 
& (u,r,s,t)\in \D^{\tC, 0}_{BB}, \\[.2ex]
H^0\left(\tE_C, 
(r-u)h_E + (1-t+2s-3r-u)l_E
\right), 
& (u,r,s,t)\in \D^{\tC}_{BB}, \\
\end{array} \right.
\end{eqnarray*}
where
\begin{align*}
\D^{\tC}_{AA} 
&= 
\{ (u,r,s,t) \in \IR_{\ge 0}\times \D_A \mid u\le r \text{ and } 1-t+2s-3r \}, \\
\D^{\tC,0}_{BB} 
&=
\{ (u,r,s,t) \in \IR_{\ge 0}\times \D_B \mid u\le -1+t+r \}, \\
\D^{\tC}_{BB} 
&=
\{ (u,r,s,t) \in \IR_{\ge 0}\times \D_B \mid -1+t+r\le u\le r \text{ and } 1-t+2s-3r \}. 
\end{align*}
We set $\D^{\tC} = \D^{\tC}_{AA} \cup \D^{\tC}_{BB}$ and $\D^{\tC,0} = \D^{\tC,0}_{BB}$. Hence 
\begin{eqnarray*}
\vol(W_{(1,t,s,r)}^{E_C}- u\tC) = 
\left\{ \begin{array}{ll}
2(r-u)(1-t+2s-3r-u),
& (u,r,s,t) \in \D^{\tC},\\
4(1-t)(1-t+s-2r),
& (u,r,s,t)\in \D^{\tC,0}, 
\end{array} \right.
\end{eqnarray*}
\begin{eqnarray}
\label{Computation: S(W^E_C;tC)}
S(W_{\bu\bu\bu\bu}^{E_C}; \tC)
= \frac{1}{5/5!} 
\int_{\D^{\tC}\cup \D^{\tC,0}} 
\frac{\vol(W_{(1,t,s,r)}^{E_C}-u\tC)}{2!}  dudrdsdt 
\,=\, \frac{469}{4860}. 
\end{eqnarray}

\section{The K-semistable domains of pairs \texorpdfstring{$\boldsymbol{(M,cQ)}$}{(M,cQ)}}
\label{Section: first and last walls}

We consider the wall-crossing problem of the K-moduli spaces of the log Fano pairs $(M, cQ)$ in this section, where $M$ is a quintic Del Pezzo fourfold or fivefold and $Q$ is an ordinary GM threefold or fourfold, respectively. Note that the general theory of K-moduli wall crossing in the proportional case was established in \cite{ADL19, Zho23}.

For $M=M_4$, one may compute delta for the divisorial valuations $\ord_{E_S}$ and $\ord_{Q_3}$. A straightforward computation shows that the smoothness of $Q_3$ implies that $S\nsubseteq Q_3$. So we have
\begin{align*} 
\frac{A_{M_4,cQ_3}(E_S)}{S(-K_{M_4}-cQ_3;E_S)}
&= \frac{25}{9(3-2c)}, \\
\frac{A_{M_4,cQ_3}(Q_3)}{S(-K_{M_4}-cQ_3;Q_3)}
&= \frac{10(1-c)}{3-2c}. \end{align*}
Hence $(M_4,cQ_3)$ is K-semistable only if $\frac{1}{9}\le c \le \frac{7}{8}$. 

For $M=M_5$, we similarly have for $\ord_{E_W}$ and $\ord_{Q_4}$ 
\begin{align*} 
\frac{A_{M_5,cQ_4}(E_W)}{S(-K_{M_5}-cQ_4;E_W)}
&= \frac{15}{8(2-c)}, \\
\frac{A_{M_5,cQ_4}(Q_4)}{S(-K_{M_5}-cQ_4;Q_4)}
&= \frac{6(1-c)}{2-c}. 
\end{align*}
Hence $(M_5,cQ_4)$ is K-semistable only if $\frac{1}{8}\le c \le \frac{4}{5}$. Moreover, we have the following theorem. 

\begin{thm}
\label{Theorem: K-ss domain of DP4 and DP5}
For general $Q_3\in|\CO_{M_4}(2)|$ and $Q_4\in |\CO_{M_5}(2)|$, we have
$$\Kss(M_4,Q_3)=\left[\frac{1}{9}, \frac{7}{8}\right], \quad
\Kss(M_5,Q_4)=\left[\frac{1}{8}, \frac{4}{5}\right]. $$ 
\end{thm}

\subsection{The last wall}
We show that the pair $(M,cQ)$ is K-stable for those $c$ sufficiently close to the last wall by the following theorem, which is a minor strengthening of the cone construction. 

\begin{thm}
\label{Theorem: strengthen of cone}
Let $(X,\D)$ be a log Fano pair of dimension $n$, and let $V\seq X$ be a prime divisor not contained in $\Supp(\D)$ such that $(X,\D+V)$ is lc. We denote by $\D_V$ the different of $\D$ on $V$. Assume that $-K_X-\D\sim_\IQ (1+r) V $ for some rational number $0<r<n$. Then we have the following:
\begin{enumerate}[label={\rm(\alph*)}, ref={\rm\alph*}]
\item\label{T:sc-a} If\, $\delta(V,\D_V) \ge 1$ and $\delta(X,\D) \ge \frac{r(n+1)}{n(r+1)}$, then $(X, \D +(1-\frac{r}{n})V) $ is K-semistable. 
\item\label{T:sc-b}  If\, $\delta(V,\D_V) > 1$ and $\delta(X,\D) > \frac{r(n+1)}{n(r+1)}$, then $(X, \Delta +cV) $ is K-stable for any 
\begin{eqnarray} \label{Formula minor strengthen of cone 1}
\left(1-\frac{r}{n}\right) - \min\{k_{X,\D}, k_{V,\D_V}\}< c < \left(1-\frac{r}{n}\right), 
\end{eqnarray}
where 
\begin{align*}
k_{X,\D} &= (r+1) \left(\delta(X,\D) - \frac{r(n+1)}{n(r+1)}\right) > 0, \\ 
k_{V,\D_V} &= \frac{r}{n}(n+1)\left(\delta(V,\Delta_V)-1\right) > 0. 
\end{align*}
\end{enumerate}
\end{thm}

\begin{rmk} 
\label{Remark. strengthen of cone}
The theorem says that if $\delta(V,\D_V)>1$ and $\delta(X,\D) > \frac{r(n+1)}{n(r+1)}$, then $(X, \Delta + cV) $ is K-stable for any $0\ll c<1-\frac{r}{n}$. 
\end{rmk}

\begin{proof}
For any $c\in \IQ_{\ge 0}$ such that the pair $(X,\D+cV)$ is log Fano, we denote by $V_\bu$ the graded linear series of $-K_X-\D-cV$ and by $W^V_{\bu\bu}$ the refinement of $V_\bu$ by $V$. 

If $x\in V$ is a closed point, then \cite[Theorem 3.2]{AZ22} implies that 
\begin{eqnarray*}
\de_x(X,\D+cV) \ge 
\min\left\{ 
\frac{(1-c)(n+1)}{r+1-c}, 
\de_x(V,\D_V;W^V_{\bu\bu})
\right\}. 
\end{eqnarray*}
Since $V\sim_\IQ \frac{1}{r+1}(-K_X-\D)$, the multi-graded linear series $W^V_{\bu\bu}$ is almost complete in the sense of \cite[Definition 2.16]{AZ22}. It is clear that $W_{\bu\bu}^{V}$ has no fixed part. Hence by \cite[Equation~(3.1)]{AZ22}, we have
\begin{eqnarray*} 
\de_x(V,\D_V;W^V_{\bu\bu}) = 
\de_x(V,\D_V;c_1(W^V_{\bu\bu}))=
\frac{r(n+1)}{n(r+1-c)}\de_x(V,\D_V), 
\end{eqnarray*} 
where 
\begin{align*} 
c_1(W^V_{\bu\bu})
&=\left((-K_X-\D-cV)
-S(-K_X-\D-cV;V)V\right)|_V \\
&=\frac{n(r+1-c)}{r(n+1)}(-K_V-\D_V). 
\end{align*}
Hence 
\begin{eqnarray} \label{Formula minor strengthen of cone 2}
\de_x(X,\D+cV) \ge 
\min\left\{ 
\frac{(1-c)(n+1)}{r+1-c}, 
\frac{r(n+1)}{n(r+1-c)}\de_x(V,\D_V)
\right\}. 
\end{eqnarray}

If $x\notin V$, then for any valuation $v$ whose center $C_X(v)$ contains $x$, we have $C_X(v)\nsubseteq V$. Hence $v(V) = 0$ and  
$A_{X,\D+cV}(v) = A_{X,\D}(v). $
We have 
\begin{align} 
\label{Formula minor strengthen of cone 3}
\de_x(X,\D+cV) 
&= \mathop{\inf}_{C_X(v)\ni x} \frac{A_{X,\D+cV}(v)}{S(-K_X-\D-cV;v)} \\
\nonumber
&= \mathop{\inf}_{C_X(v)\ni x} \frac{A_{X,\D}(v)}{\frac{r+1-c}{r+1}S(-K_X-\D;v)} 
=
\frac{r+1}{r+1-c}\delta_x(X,\D). 
\end{align}

Note that 
\begin{align*} 
\frac{(1-c)(n+1)}{r+1-c}> 1 
&\Longleftrightarrow c< \left(1-\frac{r}{n}\right), \\
\frac{r(n+1)}{n(r+1-c)}\de(V,\D_V) > 1 
&\Longleftrightarrow c> \left(1-\frac{r}{n}\right) - k_{V,\D_V}, \\
\frac{r+1}{r+1-c}\delta(X,\D) > 1
&\Longleftrightarrow c> \left(1-\frac{r}{n}\right) - k_{X,\D}. 
\end{align*}
We get~\eqref{T:sc-b} by (\ref{Formula minor strengthen of cone 2}) and (\ref{Formula minor strengthen of cone 3}). Replacing the strict inequalities by the non-strict ones, we get~\eqref{T:sc-a}. 
\end{proof}

\begin{rmk}
\label{Remark: Proof of last walls}
Apply Theorem~\ref{Theorem: strengthen of cone} to $(M_4,cQ)$. We have shown that $\delta(M_4)=\frac{25}{27}$. Hence $(r+1)\big(1-\delta(M_4)\big) = \frac{1}{9} < \frac{7}{8}$. By \cite[Theorem 5.12]{AZ21}, we know that the smooth GM threefold $Q$ is K-stable. We conclude that $(M_4,cQ)$ is K-stable for any $0 \ll c<\frac{7}{8}$. 

With the same argument in the proof of Theorem~\ref{Theorem: general special GM manifolds are K-semistable}, we get a K-stable ordinary GM fourfold $Q\seq M_5$. Apply Theorem~\ref{Theorem: strengthen of cone} to $(M_5, cQ)$. Since $\delta(M_5)=\frac{15}{16}$, we have $(r+1)\big(1-\delta(M_5)\big) = \frac{1}{8} < \frac{4}{5}$. Hence $(M_5,cQ)$ is K-stable for any $0 \ll c<\frac{4}{5}$. 
\end{rmk}

\subsection{The first wall}
Assume $M=M_4$. If $c<\frac{1}{9}$, then $(M,cQ)$ is destabilized by $E_S$. We consider the pair $(M,\frac{1}{9}Q)$ with $Q\cap S = C_2$. The product test configuration induced by $\xi_1=-\frac{1}{5}(\eta+\zeta)$ (see Section~\ref{Subsection: Aut of M_4}) degenerates $Q$ to the cone $Q_0=\{X_1^2-4X_0X_4=0\}\seq M$. 

\begin{thm}
\label{Theorem: (M_4,(1/9)Q_0) K-polystable}
The pair $(M_4, \frac{1}{9}Q_0)$ is K-polystable. 
\end{thm}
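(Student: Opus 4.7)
The strategy is a two-step analysis: first establish K-semistability of $(M_4, \tfrac{1}{9}Q_0)$ via the equivariant Abban--Zhuang estimate used in Theorem \ref{Theorem E_S minimize delta M}, then upgrade to K-polystability by showing that every equivariant divisorial minimizer of $\delta$ is induced by a one-parameter subgroup of $G = \Aut(M_4, Q_0)$. The key starting observation is that $Q_0$ meets the plane $S$ along the conic $C_2$, so $S\not\subseteq Q_0$ and $\ord_{E_S}(Q_0)=0$; hence
$$A_{M_4, \frac{1}{9}Q_0}(E_S) = 2, \qquad S\!\left(-K_{M_4} - \tfrac{1}{9}Q_0;\, E_S\right) = \tfrac{25}{9} \cdot S(\CO_M(1); E_S) = \tfrac{25}{9} \cdot \tfrac{18}{25} = 2,$$
so $E_S$ is exactly strictly semistable for $(M_4, \tfrac{1}{9}Q_0)$. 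Since $G$ is reductive (Section \ref{Subsection: Aut of M_4}), by equivariant K-stability \cite{Zhu21} it suffices to bound $\delta_Z(M_4, \tfrac{1}{9}Q_0)\geq 1$ for each minimal closed $G$-orbit $Z$; from the orbit decomposition, these are $C_2\subset S$ and the point $o_9\in Q_0$.

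\textbf{K-semistability.} For $Z = C_2$, apply Theorem \ref{Theorem: Abban-Zhuang estimate} with the refinement tower $E_S \to F \to C$ of Theorem \ref{Theorem E_S minimize delta M}. The different on $E_S$ is $\Delta_{E_S} = \tfrac{1}{9}\tilde{T}$; since $\tilde{T}\cap F$ is a proper curve we have $\ord_F(\tilde{T})=0$, hence $A_{E_S,\Delta_{E_S}}(F)=1$, and rescaling the polarization by $\tfrac{25}{9}$ in \eqref{Computation: S(W^E_S;F)} gives $S(W^{E_S}_{\bu\bu}; F) = \tfrac{5}{9}$, yielding ratio $\tfrac{9}{5}>1$. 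At the next level, the tangency $\tilde{T}|_F = 2C$ (of order two, as noted in the proof of Theorem \ref{Theorem E_S minimize delta M}) gives $\Delta_F = \tfrac{2}{9}C$, so $A_{F,\Delta_F}(C)=\tfrac{7}{9}$; combined with the analogous rescaling of \eqref{Computation: S(W^F; C)} to $\tfrac{5}{9}$, the ratio is $\tfrac{7}{5}>1$. For $Z=o_9$, construct a parallel refinement centered at $o_9$, for instance by blowing up $M_4$ at $o_9$ and analyzing the induced flag, and verify strict inequality via analogous scaling. These estimates combine to give $\delta(M_4, \tfrac{1}{9}Q_0)\geq 1$.

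\textbf{K-polystability and main obstacle.} By the Li--Xu--Zhuang criterion and its equivariant refinement \cite{LXZ21,Zhu21,BLXZ23}, K-polystability follows once we show that every $G$-equivariant divisorial valuation $v$ with $A_{M_4,\frac{1}{9}Q_0}(v) = S(-K_{M_4}-\tfrac{1}{9}Q_0;\, v)$ is induced by a one-parameter subgroup of $G$. The previous step gives strict inequality at the $F$- and $C$-levels of the refinement, so up to rescaling the unique $G$-equivariant minimizer centered at $C_2$ is $E_S$; the 1-PS with weight $-\tfrac{1}{5}(\eta+\zeta)$ from Section \ref{Subsection: Aut of M_4} induces $E_S$ and preserves $Q_0$ (it acts with weight $0$ on $X_1^2 - 4X_0 X_4$), hence lies in $G$. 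The main technical challenge is the orbit $o_9$: it sits on the singular locus of $Q_0\cap M_4$ and is not covered by the tower of Theorem \ref{Theorem E_S minimize delta M}, so constructing the correct refinement and carefully tracking the boundary contribution is the principal new ingredient, together with ensuring that the non-abelian (i.e.\ $\SL_2$) part of $G$ introduces no further non-toric equivariant minimizer.
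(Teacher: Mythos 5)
Your semistability computation along the $C_2$ tower is exactly the paper's: the rescaled ratios $1$, $\tfrac{9}{5}$, $\tfrac{7}{5}$ at the levels $E_S$, $F$, $C$ match \eqref{Computation: S(W^E_S;F)} and \eqref{Computation: S(W^F; C)}, and your observation that $E_S$ is exactly critical at $c=\tfrac19$ is correct. But the proposal has two genuine gaps. First, the orbit $o_9$: you correctly note that $G=\Aut(M_4,Q_0)$ has two closed orbits, $C_2$ and $o_9$, and that equivariant K-semistability requires controlling valuations centered at each; but you then defer the $o_9$ analysis entirely ("construct a parallel refinement\dots and verify strict inequality"), so this part is not a proof. (For what it is worth, the paper's own proof also only verifies $\delta_{C_2}\ge 1$ and asserts that this suffices, so you have identified a point where more justification is warranted; but flagging it is not the same as closing it.)

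Second, and more importantly, the polystability step is asserted rather than proved. From the Abban--Zhuang estimate, which is only a lower bound by a minimum, one cannot conclude that "the unique $G$-equivariant minimizer centered at $C_2$ is $E_S$": strict inequality at the $F$- and $C$-levels does not by itself exclude a non-toric $G$-invariant valuation $v$ with $A(v)=S(v)$. The paper's actual mechanism is structural: since $E_S$ is a toric divisor for the central $\IT\seq G$, one has $\Val^{\IT,\circ}_M\cong \Val^{\circ}_{E_S}\times N_\IR$, so any $G$-invariant $v$ not of the form $\wt_\xi$ decomposes as $v_{0,\xi}$ with $v_0$ a nontrivial $G$-invariant valuation on $E_S$; then the canonical $\IT$-equivariant identification $R_{\bu\bu}\cong W^{E_S}_{\bu\bu}$ gives $S(\CO(1);v_0)=S(W^{E_S}_{\bu\bu};v_0)$ and $A_{M,\frac19 Q_0}(v_0)=A_{E_S,\frac19\tilde T}(v_0)$, whence $A(v)/S(v)\ge A(v_0)/S(v_0)\ge \delta_C(E_S,\tfrac19\tilde T;W^{E_S}_{\bu\bu})>\tfrac{25}{9}$, a contradiction with $\Fut=0$ for a non-product special test configuration. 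Without this transfer of $A$ and $S$ from $M$ to $E_S$ (which is also what disposes of the $\SL_2$-direction, since $C$ is the unique minimal orbit in $E_S$), your uniqueness claim does not follow. So the proposal follows the paper's route in outline but leaves both the $o_9$ estimate and the key valuation-theoretic step of the polystability argument unestablished.
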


\begin{proof}
Let $G:=\Aut(M,Q_0)$. It has one-dimensional center $\IT$ whose action is generated by $\xi_1=-\frac{1}{5}(\eta+\zeta)$. Let $N\cong \IZ$ be the lattice generated by $\xi_1$. Recall that the divisor $E_S\seq \tM$ is $G$-invariant. It is the toric divisor of $-\frac{1}{5}(\eta+\zeta)$, and the $\IT$-action on $E_S$ is trivial. We naturally have a $G$-equivariant surjective rational map $f\colon  \tilde{M}\dashrightarrow E_S$, which makes $E_S$ a quotient of $\tilde{M}$ by the $\IT$-action. Hence $\Val^{\IT,\circ}_M \cong \Val^{\circ}_{E_S} \times N_\IR$. Moreover, $\Val^{G,\circ}_M \cong \Val^{(G/\IT),\circ}_{E_S} \times N_\IR$ by Lemma~\ref{Lemma. twist of G-invariant valuations}. 
Via this isomorphism, for any valuation $v\in \Val^{\circ}_{E_S} \seq \Val^{\IT,\circ}_{M}$, we have 
\begin{eqnarray}
\label{Eqnarray. A_M(v) = A_ES(v)}
A_{M, \frac{1}{9}Q_0}(v)=A_{E_S, \frac{1}{9}\tilde{T}}(v). 
\end{eqnarray}
Recall that $\tau_{E_S}\colon E_S\to \IP^3$ is the blowup of a twisted cubic curve $C_3$ with exceptional divisor $F$, and $\pi_{E_S}\colon E_S \to \IP^2$ is a $\IP^1$-bundle with $\tilde{T}=\pi_{E_S}^{-1}(C_2)$, which is the strict transform of the tangent developable $T\seq \IP^3$ of $C_3$. The divisors $F$ and $\tilde{T}$ in $E_S$ are tangent along a curve $C$ of order $2$. They are the only $G$-invariant subvarieties of $E_S$. Also note that $\pi_*^{-1}(Q_0)\cap E_S = \tilde{T}$, intersecting transversally. 

We take the refinements of $R_\bu=R(M,\CO(1))$ by $E_S$ and $F$ successively and get multi-graded linear series $W^{E_S}_{\bu\bu}$ and $W^{F}_{\bu\bu\bu}$, respectively. We may rescale the line bundle $-K_M-\frac{1}{9}Q_0\sim_\IQ\frac{25}{9}\CO(1)$ and set $\tR_\bu=R(M,-K_M-\frac{1}{9}Q_0)$. We denote the corresponding refinements of $\tR_\bu$ by $\tW^{E_S}_{\bu\bu}$ and $\tW^{F}_{\bu\bu\bu}$. By Equations~(\ref{Computation: S(W^E_S;F)}) and (\ref{Computation: S(W^F; C)}), we have
\begin{align*}
S(\tR_\bu;E_S)
&= \frac{25}{9} S(R_\bu;E_S) 
= 2, \\
S(\tW_{\bu\bu}^{E_S}; F) 
&= \frac{25}{9} S(W_{\bu\bu}^{E_S}; F)
=\frac{5}{9}, \\
S(\tW_{\bu\bu\bu}^{F}; C)
&=\frac{25}{9}S(W_{\bu\bu\bu}^{F}; C)
=\frac{5}{9}. 
\end{align*}
Hence $\beta_{M,\frac{1}{9}Q_0}(E_S) = A_{M,\frac{1}{9}Q_0}(E_S)- S(\tR_\bu;E_S)=0$. In other words, $\Fut(\xi_1)=0$. 
Since the restriction of $\frac{1}{9}\tilde{T}$ on $F$ is $\frac{2}{9}C$, we have  
\begin{eqnarray*}
\frac{A_{E_S, \frac{1}{9} \tilde{T}}(F)}
{S(\tW_{\bu\bu}^{E_S}; F)}
=\frac{9}{5}, \quad
\frac{A_{F,\frac{2}{9}C}(C)}
{S(\tW_{\bu\bu\bu}^{F}; C)}
=\frac{7}{5}. 
\end{eqnarray*}
By Theorem~\ref{Theorem: Abban-Zhuang estimate}, we get the following estimate: 
\begin{eqnarray}
\label{Eqnarray. delta_C(E_S,..) > 1}
\delta_{C}(E_S,\frac{1}{9}\tilde{T}; \tW^{E_S}_{\bu\bu})
\,\ge\, 
\min\left\{ 
\frac{A_{E_S,\frac{1}{9}\tilde{T}}(F)}{S(\tW^{E_S}_{\bu\bu};F)}, 
\frac{A_{F,\frac{2}{9}C}(C)}{S(\tW^F_{\bu\bu\bu};C)}
\right\} =
\min\left\{
\frac{9}{5}, \frac{7}{5}
\right\} \,>\, 1. 
\end{eqnarray}
Note that $R_\bu=R(M,\CO(1))$ admits a weight decomposition $R_m=\oplus_{\alpha \in \IZ}R_{m,\alpha}$ with respect to the $\IT$-action. Since $E_S$ is a toric divisor of $\IT$, we have a canonical $\IT$-equivariant isomorphism of $\IN^2$-graded linear series $R_{\bu\bu}\cong W^{E_S}_{\bu\bu}$. For any $v\in\Val^{\IT}_{E_S}$, the filtrations induced by $v$ on $R_{\bu\bu}$ and $W^{E_S}_{\bu\bu}$ are the same via this isomorphism. Hence
\begin{eqnarray}
\label{Eqnarray. S(R; v)=S(W^E_S; v)}
{S(\CO(1);v)}=S(W^{E_S}_{\bu\bu};v). 
\end{eqnarray}

Now we show that $(M,\frac{1}{9}Q_0)$ is K-polystable. Otherwise, then by \cite[Corollary 4.11]{Zhu21}, it is not $G$\nobreakdash-equivariantly K-polystable. Hence by \cite[Theorem 7]{LX14}, there exists a $G$-equivariant non-product-type special test configuration $(\CX,\D,\CL)$ of $(M, \frac{1}{9}Q_0)$ with $\Fut(\CX,\D,\CL) \le 0$. Then $w= \ord_{\CX_0}|_{\CX_1}$ is a $G$-invariant valuation on $M$ not of the form $\wt_\xi$ for any $\xi\in N_\IR$ with $\beta_{M, \frac{1}{9}Q_0}(w) \le 0$. Since $\Val^{G,\circ}_M \cong \Val^{(G/\IT),\circ}_{E_S} \times N_\IR$, we may write $w=v_{a\xi}$ for some $v\in \Val^{(G/\IT),\circ}_{E_S}$ and $a\in\IR$. Let $\theta=\theta_{a\xi_1}(v)$ as defined by Equation~(\ref{Eqnarray: A(v_xi) = A(v)+theta_xi(v)}). We have 
$$1
\ge\frac{A_{M, \frac{1}{9}Q_0}(v_{a\xi})}{S(\tR_\bu;v_{a\xi})}
=\frac{A_{M, \frac{1}{9}Q_0}(v)+\theta}{S(\tR_\bu;v)+\theta}
=\frac{A_{E_S, \frac{1}{9}\tT}(v)+\theta}{S(\tW^{E_S}_{\bu\bu
};v)+\theta}, 
$$
where the first equality follows from \cite[Proposition 3.12]{Li19} and the fact that $\Fut(\xi_1)=0$, and the second equality follows from Equations~(\ref{Eqnarray. A_M(v) = A_ES(v)}) and (\ref{Eqnarray. S(R; v)=S(W^E_S; v)}). 
By the elementary equivalence  
$\frac{a}{b} \le \frac{a'}{b'} \Leftrightarrow \frac{a}{b} \le \frac{a+a'}{b+b'}  \le \frac{a'}{b'}$ (for $b,b'>0$), we have
$$1
\ge\frac{A_{E_S, \frac{1}{9}\tT}(v)}{S(\tW^{E_S}_{\bu\bu
};v)} \ge \delta_{C}(E_S,\frac{1}{9}\tilde{T}; \tW^{E_S}_{\bu\bu}) >1, 
$$
where the second inequality follows from the $(G/\IT)$-invariance of $v$ and $C\seq E_S$ being the unique minimal $(G/\IT)$-orbit (see Equation~(\ref{Eqnarray. E_S orbit decomposition})). We get a contradiction. Hence $(M,\frac{1}{9}Q_0)$ is K-polystable. 
\end{proof}

Assume $M=M_5$. If $c<\frac{1}{8}$, then $(M,cQ)$ is destablized by $E_W$. Let $Q_0=\{X_0X_2+X_1X_3=0\}\seq M$. We consider those $Q$ with $Q_W = Q_{0,W}$. Then the product test configuration of $M$ induced by $-\frac{1}{5}(2\eta+\varsigma+\zeta)$ (see Section~\ref{Subsection: Aut of M_5}) degenerates $Q$ to the cone $Q_0$. We have the following. 

\begin{thm}
\label{Theorem: (M_5,(1/8)Q_0) K-polystable}
The pair $(M_5, \frac{1}{8}Q_0)$ is K-polystable. 
\end{thm}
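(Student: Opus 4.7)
The plan is to imitate the proof of Theorem~\ref{Theorem: (M_4,(1/9)Q_0) K-polystable} almost verbatim, exploiting the $S$-invariants already computed in Section~\ref{Section: quintic Del Pezzo fivefolds} along the tower $E_W \supset E_l \supset E_C \supset \tilde{C}$. Since $-K_{M_5} - \tfrac18 Q_0 \sim_\IQ \tfrac{15}{4}\CO_M(1)$, the critical ratio to beat along $R_\bullet = R(M,\CO_M(1))$ is $\tfrac{15}{4}$ rather than $1$. Note that $W \not\subset Q_0$ (the intersection $Q_{0,W}$ is a smooth rank-$4$ quadric surface in $W \cong \IP^3$), so $\ord_{E_W}(Q_0)=0$, and using \eqref{Computation: S(O(1);E_W)} one gets $A_{M,\frac{1}{8}Q_0}(E_W)/S(\CO(1);E_W) = 2/(8/15) = 15/4$, matching exactly. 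This is the reason $c=\tfrac18$ is the first wall.

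First, I will establish K-semistability by showing that for every minimal $G$-orbit $Z \subset M_5$ (the list being $l_{01}$, $l_{23}$, $q \subset K$, $o_4$, $o_9$ from Section~\ref{Subsection: Aut of M_5}) we have $\delta_Z(M_5, \tfrac18 Q_0) \ge 1$. The orbits $l_{01}$ and $l_{23}$ are interchanged by a Weyl-type involution in the normalizer of the maximal torus of $G$, so it suffices to treat $l_{23}$ together with the orbits lying on $Q_0$ (where the different of $\tfrac18 Q_0$ is more delicate). For $l_{23}$ I will apply iterated Abban–Zhuang (Theorem~\ref{Theorem: Abban-Zhuang estimate}) along $E_W, E_l, E_C, \tilde{C}$: at each stage, the different is the strict transform of $\tfrac18 \pi_W^*Q_{0,W}$. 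The relevant multiplicities are $\ord_{E_l}(\pi_W^*Q_{0,W})=1$ (since $l \subset L_{23} \subset \pi_W^*Q_{0,W}$ and $L_{23}$ is a Cartier subscheme), and I will compute analogously the coefficients of $E_C$ and $\tilde{C}$ from \eqref{Formula: restriction of subvarieties of ttE_W to tE_l}, \eqref{Formula: restriction of subvarieties of tE_l to E_C}. Combining with the $S$-invariants \eqref{Computation: S(W^E_W;E_l)}, \eqref{Computation: S(W^E_l;E_C)}, \eqref{Computation: S(W^E_C;tC)}, one checks that each ratio $A_{\cdot,\cdot}/S$ is at least $\tfrac{15}{4}$ (the $E_l$ stage giving $(3-\tfrac18)/(23/30) = 15/4$, matching again). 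The orbits $q, o_4, o_9$ are handled by choosing $E_W$ as the first plt-blowup, where already the first ratio is $\tfrac{15}{4}$, and the subsequent refinements on $E_W$ (toward these orbits, which now lie on the quadric $\tau_W(E_W) = Q^3$) will give easier bounds since $Q_0$ restricts more tangentially there.

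Second, to upgrade from K-semistability to K-polystability, I will follow the template of Theorem~\ref{Theorem: (M_4,(1/9)Q_0) K-polystable}. Let $\IT\seq G$ be the one-parameter subgroup generated by $-\tfrac{1}{5}(2\eta+\varsigma+\zeta)$; its weights on $[X_0{:}\cdots{:}X_9]$ shift (after normalization) to $(0,0,0,0,1,1,1,1,1)$, so $E_W$ is the toric divisor that provides a $\IT$-equivariant rational quotient $f\colon \tilde M_5 \dashrightarrow E_W$ and a canonical isomorphism of $\IN^2$-graded linear series $R_{\bullet\bullet}\cong W^{E_W}_{\bullet\bullet}$. If $(M_5,\tfrac18 Q_0)$ were not K-polystable, then by equivariant K-polystability \cite[Cor.~4.11]{Zhu21} and \cite[Theorem~7]{LX14} there would exist a $G$-equivariant non-product special test configuration with $\Fut = 0$. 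The induced $G$-invariant valuation $v$ satisfies $\beta_{M_5,\frac{1}{8}Q_0}(v)=0$ and is not of the form $\wt_\xi$, so by $\IT$-invariance it decomposes as $v=v_{0,\xi}$ with $v_0$ a $G$-invariant valuation on $E_W$. Using the canonical identification above, the equality $A_{M_5,\frac{1}{8}Q_0}(v)/S(v)=15/4$ forces $A_{E_W,\frac{1}{8}\pi_W^* Q_{0,W}}(v_0)/S(W^{E_W}_{\bullet\bullet};v_0)=15/4$, contradicting the strict inequalities in the subsequent refinements at $E_C$ and $\tilde{C}$ (which give ratios strictly greater than $15/4$). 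This yields the desired contradiction.

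The bulk of the work, and the main obstacle, is the bookkeeping in Step~1: there are three iterated refinements, four polyhedral regions $\D_A, \D_B, \D_{C_1}, \D_{C_2}$ appearing in $W^{E_l}_{\bullet\bullet\bullet}$, and two in $W^{E_C}_{\bullet\bullet\bullet\bullet}$, so verifying that the different of $\tfrac18 Q_0$ does not lower any of the discrepancies below the critical ratio $\tfrac{15}{4}$ on every piece requires patience. The polystability step, by contrast, is then essentially a formal adaptation of the $M_4$ argument.
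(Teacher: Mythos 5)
Your strategy coincides with the paper's in outline: iterated Abban--Zhuang along the flag $E_W\supset E_l\supset E_C\supset \tC$ using the $S$-invariants (\ref{Computation: S(O(1);E_W)}), (\ref{Computation: S(W^E_W;E_l)}), (\ref{Computation: S(W^E_l;E_C)}), (\ref{Computation: S(W^E_C;tC)}) for semistability, followed by an equivariant toric-quotient argument for polystability. Your numerics at $l_{23}$ are right (in particular $A_{E_W,\frac18\tQ_{0,W}}(E_l)/S(W^{E_W}_{\bu\bu};E_l)=\frac{23/8}{23/30}=\frac{15}{4}$), and your insistence on also checking the other minimal $G$-orbits $l_{01},q,o_4,o_9$ is, if anything, more scrupulous than the paper, which simply asserts that it suffices to compute $\delta_{l_{23}}$ --- though "easier bounds since $Q_0$ restricts more tangentially there" is a placeholder, not an argument.

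The genuine gap is in your polystability step. You quotient only by the one-dimensional torus $\IT_1$ generated by $-\frac15(2\eta+\varsigma+\zeta)$, land the residual $G$-invariant valuation $v_0$ on $E_W$, and hope to contradict $A(v_0)/S(v_0)=\frac{15}{4}$ via "the strict inequalities at $E_C$ and $\tC$". But the Abban--Zhuang chain on $E_W$ begins with the $E_l$-term, which is \emph{exactly} $\frac{15}{4}$, so the minimum you can extract is $\frac{15}{4}$ and no contradiction follows: the equality case $v_0=\ord_{E_l}$ is not excluded. This is not a removable technicality --- $\ord_{E_l}$ is precisely the restriction to $E_W$ of the toric valuation $\wt_{\xi_2}$ attached to the second one-parameter subgroup $-\frac15(\eta-2\varsigma-2\zeta)$ (whose toric divisor is $E_S$), so $v=v_{0,\xi}$ with $v_0=\ord_{E_l}$ is a product-type valuation for the \emph{two}-dimensional torus, invisible to your one-torus setup. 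The paper closes this by taking $\IT=\IT_1\times\IT_2$ and quotienting twice, $\tM\dashrightarrow E_W$ and then $\tE_W\dashrightarrow E_l$, so that $\Val^{\IT,\circ}_M\cong \Val^{\circ}_{E_l}\times N_\IR$ with $N$ of rank two; the residual valuation then lives on $E_l$, where the $G$-action descends to $\SL_2$ with unique minimal orbit $C$ and the remaining ratios $\frac{405}{308}$ and $\frac{567}{221}$ are both strictly greater than $1$, yielding the contradiction. To repair your argument, incorporate $\IT_2$ and descend one more level to $E_l$ before invoking strictness; the $M_4$ template transfers only because there the single quotient already leaves all subsequent ratios strict.
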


\begin{proof}
Let $G=\Aut(M,Q_0)$, and let $\IT_1,\IT_2 \seq G$ be the one-parameter subgroups generated by $\xi_1=-\frac{1}{5}(2\eta+\varsigma+\zeta)$ and $\xi_2=-\frac{1}{5}(\eta-2\varsigma-2\zeta)$, respectively, and $N$ the co-weight lattice of the action of $\IT=(\IT_1\times \IT_2)$. By Lemma~\ref{Lemma. center of aut(M_5,Q_0)}, $\IT$ lies in the center of $G$. Note that $E_W$ is a toric divisor of $\IT_1$, and we  naturally have a $\IT_1$-equivariant surjective rational map $f_1\colon  \tM\dashrightarrow E_W$, which makes $E_W$ a quotient of $\tilde{M}$ by the $\IT_1$-action. The one-parameter subgroup $\IT_2$ acts on $E_W$ faithfully, and $E_l$ is a toric divisor of such an action. Hence we have a $\IT_2$-equivariant surjective rational map $f_2\colon  \tE_W \dashrightarrow E_l$ which makes $E_l$ a quotient of $\tE_W$ by the $\IT_2$-action.
Hence $\Val^{\IT,\circ}_M \cong \Val^{\circ}_{E_l} \times N_\IR$. Moreover, we have $\Val^{G,\circ}_M \cong \Val^{(G/\IT),\circ}_{E_l} \times N_\IR$ by Lemma~\ref{Lemma. twist of G-invariant valuations}. Via this isomorphism, for any $v\in \Val^{\circ}_{E_l}\seq \Val^{\IT,\circ}_M$, we have 
\begin{eqnarray}
\label{Eqnarray. A_M(v)=A_El(v)}
A_{M,\frac{1}{8}Q_0}(v) = A_{E_W, \frac{1}{8}Q_{0,W}}(v) = A_{E_l, \frac{1}{8}R}(v) = A_{\tE_l,\frac{1}{8}\tR-\frac{7}{8}E_C}(v). 
\end{eqnarray}
Recall that $E_l \cong \IP^1\times \IP^2$. The $(G/\IT)$-action on $E_l$ has orbit decomposition $E_l= (E_l\setminus R) \sqcup (R\setminus C) \sqcup C$ (see Equation~(\ref{Eqnarray. E_l orbit decomposition})). This $(G/\IT)$-action lifts to $\tE_l=\Bl_C E_l$. Recall that $E_C$ is the exceptional divisor of the blowup, $\tR$ is the strict transform of $R\seq E_l$, and they intersection transversally along $\tC$. They are the only $(G/\IT)$-invariant subvarieties of $\tE_l$ (see Equation~(\ref{Eqnarray. tE_l orbit decomposition})). 

We take the refinements of $R_\bu=R(M,\CO(1))$ by $E_W$, $E_l$ and $E_C$ successively (see Section~\ref{The properties of Del Pezzo fivefolds}) and get multi-graded linear series $W^{E_W}_{\bu\bu}$, $W^{E_l}_{\bu\bu\bu}$, $W^{E_C}_{\bu\bu\bu\bu}$, respectively. Note that $-K_M-\frac{1}{8}Q_0\sim_\IQ \frac{15}{4}\CO(1)$. We also set $\tR_\bu = R(M,-K_M-\frac{1}{8}Q_0)$ and denote by $\tW^{E_W}_{\bu\bu}$, $\tW^{E_l}_{\bu\bu\bu}$, $\tW^{E_C}_{\bu\bu\bu\bu}$ the corresponding refinements. By Equations~(\ref{Computation: S(O(1);E_W)}) and (\ref{Computation: S(W^E_W;E_l)}), we have 
\begin{align*}
A_{M,\frac{1}{8}Q_0}(E_W) - S(\tR_\bu;E_W) 
&= (2-0) - \frac{15}{4} \cdot \frac{8}{15} = 0, \\
A_{E_W,\frac{1}{8}\tQ_{0,W}}(E_l) - S(\tW^{E_W}_{\bu\bu};E_l) 
&= (3-\frac{1}{8}) - \frac{15}{4} \cdot \frac{23}{30} = 0. 
\end{align*}
In other words, $\Fut(\xi_1)=\Fut(\xi_2)=0$. 
Note that $(\tE_l,\frac{1}{8}\tR-\frac{7}{8}E_C)$ is sub-klt and $(\tE_l,\frac{1}{8}\tR+E_C)$ is log smooth. 
By Theorem~\ref{Theorem: Abban-Zhuang estimate} and Equations~(\ref{Computation: S(W^E_l;E_C)}) and (\ref{Computation: S(W^E_C;tC)}), we have 
\begin{align} 
\delta_{\tC}(\tE_l,\frac{1}{8}\tR-\frac{7}{8}E_C; \tW^{E_l}_{\bu\bu\bu})
&\ge 
\min\left\{
\frac{A_{\tE_l,\frac{1}{8}\tR-\frac{7}{8}E_C}(E_C)}{S(\tW^{E_l}_{\bu\bu\bu};E_C)}, 
\frac{A_{E_C,\frac{1}{8}\tC}(\tC)}{S(\tW^{E_C}_{\bu\bu\bu\bu};\tC)}
\right\}\\
\label{Eqnarray. delta(E_l, (1/8)R; W) > 1}
&=
\min\left\{
\frac{2-\frac{1}{8}}{\frac{15}{4}\cdot \frac{154}{405}}, 
\frac{1-\frac{1}{8}}{\frac{15}{4}\cdot \frac{469}{4860}}
\right\} 
\,=\,
\min\left\{
\frac{405}{308}, 
\frac{162}{67}
\right\} 
> 1. 
\end{align}
Since $E_W\seq \tM$ and $E_l\seq \tE_W$ are toric divisors, the weight decomposition gives us a canonical $\IT$-equivariant isomorphism of $\IN^3$-graded linear series $R_{\bu\bu\bu}\cong W^{E_l}_{\bu\bu\bu}$. The filtrations induced by $v\in \Val_{E_l}\seq \Val_M$ on $R_{\bu\bu\bu}$ and $W^{E_l}_{\bu\bu\bu}$ are the same via this isomorphism. Hence 
\begin{eqnarray}
\label{Eqnarray. S(R; v)=S(W^E_l; v)}
{S(\CO(1);v)}=S(W^{E_l}_{\bu\bu\bu};v). 
\end{eqnarray}

Now we show that $(M, \frac{1}{8}Q_0)$ is K-polystable. Otherwise, by the same argument as for Theorem~\ref{Theorem: (M_4,(1/9)Q_0) K-polystable}, there exist a $G$-invariant divisorial valuation $v \in \Val^{\circ}_{E_l}$ and a $\xi\in N_\IQ$ such that $w=v_\xi\in\Val^{G,\circ}_{M}$ is a special divisorial valuation and $\beta_{M, \frac{1}{8}Q_0}(w)\le 0$. Then
$$1
\ge\frac{A_{M, \frac{1}{8}Q_0}(v_{\xi})}{S(\tR_\bu;v_{\xi})}
=\frac{A_{M, \frac{1}{8}Q_0}(v)+\theta}{S(\tR_\bu;v)+\theta}
=\frac{A_{\tE_l,\frac{1}{8}\tR-\frac{7}{8}E_C}(v)+\theta}{S(\tW^{E_l}_{\bu\bu\bu};v)+\theta}. 
$$
Also by the same argument as for Theorem~\ref{Theorem: (M_4,(1/9)Q_0) K-polystable}, we have
$$1
\ge \frac{A_{\tE_l,\frac{1}{8}\tR-\frac{7}{8}E_C}(v)}{S(\tW^{E_l}_{\bu\bu\bu};v)}
\ge \delta_{\tC}(\tE_l,\frac{1}{8}\tR-\frac{7}{8}E_C; \tW^{E_l}_{\bu\bu\bu}) >1, 
$$
which gives a contradiction. Hence $(M, \frac{1}{8}Q_0)$ is K-polystable. 
\end{proof}

\begin{proof}[Proof of Theorem~\ref{Theorem: K-ss domain of DP4 and DP5}]
This follows directly from Remark~\ref{Remark: Proof of last walls} and Theorems~\ref{Theorem: (M_4,(1/9)Q_0) K-polystable} and~\ref{Theorem: (M_5,(1/8)Q_0) K-polystable}. 
\end{proof}

\begin{rmk}
The wall crossings for the K-moduli spaces of $(M_4, cQ_3)$ and $(M_5, cQ_4)$ are closely related to K-moduli spaces of Gushel--Mukai threefolds and fourfolds. In fact, the K-moduli space of GM threefolds (resp.\ GM fourfolds) is isomorphic to the K-moduli space of $(M_4, \frac{7}{8}Q_3)$ (resp.\ $(M_5, \frac{4}{5}Q_4)$) using degeneration to the normal cone. Moreover, if we restrict to $c = \frac{1}{2}$, then the corresponding K-moduli spaces of pairs are isomorphic to K-moduli spaces of special GM fourfolds and fivefolds.
\end{rmk}

\section{K\"ahler--Ricci solitons on quintic Del Pezzo fourfolds and fivefolds}
\label{Section: solitons on DP4 and DP5}

In this section, we show that $M_4$ and $M_5$ both admit K\"ahler--Ricci solitons following the argument in \cite{MW23}, which answers a question asked by \cite{Ino19,Del22}. 
By the works \cite{HL23,BLXZ23}, we know that a Fano manifold $X$ admits a K\"ahler--Ricci soliton if and only if $(X,\xi_0)$ is weighted K-polystable for some holomorphic vector field $\xi_0$ on $X$ (which is called a {\it soliton candidate}l see for example \cite[Section 3.1]{MW23}). 

For Del Pezzo fourfolds $M_4$, consider the $\IG_m$-action induced by $-\frac{1}{5}(\eta+\zeta)$ (see Section~\ref{Subsection: Aut of M_4}) with toric divisor $E_S$. Since $A_{M_4}(E_S)=2$ and $|\CO(1)-u E_S|_\IQ\ne \varnothing$ for $u\in[0,2]$, the moment polytope of this $\IG_m$-action on $-K_{M_4}=\CO(3)$ is $\BP = [-2,4]\ni 3u-2 =: u'$ (the moment polytope is determined by Equation~(\ref{Eqnarray: valuation of product TC})). Recall that the soliton candidate $\xi_0$ with respect to this $\IG_m$-action is the minimizer of the $\BH$-functional 
$$\BH(\xi)=\log\left(\int_\BP e^{-u'\xi } \cdot \DH_\BP(du')\right),$$ 
where $\xi\in \IN(\IG_m)_\IR\cong \IR$ and $\DH_\BP(du') = 3^4\cdot \vol(W^{E_S}_{(1,u)})du$. We can numerically solve 
$$\xi_0 \approx 0.16838665311714196.$$ 

\begin{thm}\label{Theorem: M_4 soliton}
The pair $(M_4,\xi_0)$ is weighted K-polystable. 
\end{thm}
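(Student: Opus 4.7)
The plan is to adapt the proof of Theorem \ref{Theorem: (M_4,(1/9)Q_0) K-polystable} to the weighted setting, following the framework of \cite{MW23}. Let $G = \Aut(M_4, Q_0)$ as in Section \ref{Subsection: Aut of M_4}; the soliton candidate $\xi_0$ lies in the Lie algebra of the central torus $\IT\subset G$, so the problem reduces via $G$-equivariant weighted K-stability to bounding the weighted delta invariant at the unique minimal $G$-orbit $C_2 \subset M_4$. I would apply the $\xi_0$-weighted analog of Theorem \ref{Theorem: Abban-Zhuang estimate} along the flag $E_S \supset F \supset C$ from Theorem \ref{Theorem E_S minimize delta M}, which gives
\begin{equation*}
\delta^{\xi_0}_{C_2}(M_4) \;\geq\; \min\left\{\frac{A_{M_4}(E_S)}{S^{\xi_0}_{M_4}(E_S)},\; \frac{A_{E_S}(F)}{S^{\xi_0}_{W^{E_S}_{\bu\bu}}(F)},\; \frac{A_F(C)}{S^{\xi_0}_{W^F_{\bu\bu\bu}}(C)}\right\}.
\end{equation*}

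The first quotient equals $1$ by construction of $\xi_0$: minimizing $\BH$ over $N_\IR$ is equivalent to the vanishing of the weighted $\beta$-invariant of the toric divisor $E_S$ under the $\IT$-action, i.e.\ $A_{M_4}(E_S) = S^{\xi_0}_{M_4}(E_S)$. For the remaining two quotients, the unweighted analogs computed in \eqref{Computation: S(W^E_S;F)} and \eqref{Computation: S(W^F; C)} give values $9/5$ and $7/5$ respectively; the weighted versions are obtained by inserting the factor $e^{-u'\xi_0}$ into the same Okounkov-body integrals against the appropriate normalizing DH measure. Since $\xi_0 \approx 0.168$ is small, a rigorous numerical (or interval-arithmetic) estimate of the resulting weighted slice-volume integrals should still certify that both ratios exceed $1$, yielding weighted K-semistability.

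To promote this to weighted K-polystability, I would run the equivariant argument verbatim from the proof of Theorem \ref{Theorem: (M_4,(1/9)Q_0) K-polystable}: if $(M_4, \xi_0)$ were not weighted K-polystable, the weighted analog of \cite{LX14} in \cite{BLXZ23} yields a non-product-type $G$-equivariant weighted special test configuration with vanishing modified Futaki invariant. The induced $\IT$-invariant valuation $v$ is not of the form $\wt_\xi$, so under the decomposition $\Val^{\IT,\circ}_{M_4} \cong \Val^{\circ}_{E_S}\times N_\IR$ we have $v = v_{0,\xi}$ for some nontrivial $G$-invariant valuation $v_0$ on $E_S$. The $\IT$-equivariant isomorphism $R_{\bu\bu} \cong W^{E_S}_{\bu\bu}$ identifies weighted $S$-invariants, and one deduces
\begin{equation*}
1 \;=\; \frac{A_{M_4}(v_{0,\xi})}{S^{\xi_0}(v_{0,\xi})} \;\geq\; \frac{A_{E_S}(v_0)}{S^{\xi_0}_{W^{E_S}_{\bu\bu}}(v_0)} \;\geq\; \delta^{\xi_0}_{C}\big(E_S;\,W^{E_S}_{\bu\bu}\big) \;>\; 1,
\end{equation*}
a contradiction.

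The main obstacle is the explicit control of the two weighted $S$-invariants on $W^{E_S}_{\bu\bu}$ and $W^F_{\bu\bu\bu}$: because $\xi_0$ is known only numerically, the exponential weight $e^{-u'\xi_0}$ prevents a clean closed-form evaluation, so one must either work with rigorous interval bounds on $\xi_0$ or verify monotonicity of the relevant integrals in $\xi_0$ to propagate the inequalities from the unweighted case. A secondary technical point is checking that the weighted Abban-Zhuang estimate and the weighted equivariant polystability reduction of \cite{BLXZ23} descend compatibly along the two successive refinements by $F$ and $C$, which requires that $\xi_0$ acts trivially on each refinement center (which it does, since the $\IT$-fixed locus contains the entire flag $E_S \supset F \supset C$).
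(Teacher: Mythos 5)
Your proposal is correct and follows essentially the same route as the paper: weighted Abban--Zhuang along the flag $E_S\supset F\supset C$, with the first ratio equal to $1$ by the defining property of $\xi_0$, numerical evaluation of the two remaining weighted $S$-invariants, and the equivariant valuation argument from Theorem \ref{Theorem: (M_4,(1/9)Q_0) K-polystable} to upgrade to polystability. (One small slip: the unweighted comparison ratios for this boundary-free setting are $5/3$ for both $F$ and $C$, coming from $S=1/5$ in \eqref{Computation: S(W^E_S;F)} and \eqref{Computation: S(W^F; C)}; the values $9/5$ and $7/5$ you quote belong to the pair $(M_4,\tfrac19 Q_0)$.)
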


\begin{proof}
By the definition of $\xi_0$, we have 
$$\int_0^2 (3u-2)e^{-(3u-2)\xi_0} \cdot \vol(W^{E_S}_{(1,u)}) du = 0. $$
Recall that the $g$-weighted volume is defined by 
\begin{align*}
\frac{\vol^g(\CF_{E_S}^{(u_1)}R_\bu)}{4!} 
&:= \int_{u\ge u_1} e^{-(3u-2)\xi_0} \frac{\vol(W^{E_S}_{(1,u)})}{3!}du, \\
\Bv^g 
&:= \frac{\vol^g(R_\bu) }{4!}
\,=\, 
\int_0^2 e^{-(3u-2)\xi_0} \frac{\vol(W_{(1,u)})}{3!}du \\
&\approx 0.2055698662861948. 
\end{align*}
Let $R_\bu = R(M_4, \CO(1))$. Using integration by parts, we have 
\begin{eqnarray*}
S^g(R_\bu;E_S) 
:= \frac{1}{\Bv^g} 
\int_{0}^2 
\frac{\vol^g(\CF_{E_S}^{(u)}R_\bu)}{4!} du 
= \frac{1}{\Bv^g} 
\int_0^2 ue^{-(3u-2)\xi_0}
\frac{\vol(W^{E_S}_{(1,u)})}{3!}du 
= \frac{2}{3} 
= \frac{1}{3} A_{M_4}(E_S). 
\end{eqnarray*}
Moreover, we have (see Equations~(\ref{Region. M_4 W^ES F (u,v)}) for $\D_1^F$ and $\D_2^F$, and Equation~(\ref{Region. M_4 W^F C}) for $\D^C$)
\begin{align*}
S^g(W_{\bu\bu}^{E_S}; F)
&= \frac{1}{\Bv^g} 
\int_{\D_1^F\cup \D_2^F} 
e^{-(3u-2)\xi_0} \cdot
\frac{\vol\left(\CF_F^{(v)}W_{(1,u)}^{E_S}\right)}{3!} dudv 
\,\approx\, 0.179638, \\
S^g(W_{\bu\bu\bu}^{F}; C)
&= \frac{1}{\Bv^g}
\int_{\D^C} 
e^{-(3u-2)\xi_0} \cdot
\frac{\vol\left(\CF_C^{(w)}W_{(1,u,v)}^{F}\right)}{2!} dudvdw 
\,\approx\, 0.211933. 
\end{align*}
By \cite[Corollary 5.5]{MW23}, we have
\begin{align*}
\delta^g_{C}(E_S;\tW^{E_S}_{\bu\bu})
&\ge \frac{1}{3}
\min\left\{
\frac{A_{E_S}(F)}{S^g(W^{E_S}_{\bu\bu};F)}, 
\frac{A_{F}(C)}{S^g(W^F_{\bu\bu\bu};C)}
\right\} \\
&= \frac{1}{3}
\min\left\{
\frac{1}{0.179638}, \frac{1}{0.211933}
\right\} \,>\,1. 
\end{align*}
Then following the same argument as for Theorem~\ref{Theorem: (M_4,(1/9)Q_0) K-polystable} (see also \cite{MW23,MW24}), we see that $(M_4,\xi_0)$ is weighted K-polystable. 
\end{proof}

For Del Pezzo fivefolds $M_5$, consider the $\IG_m$-action induced by $-\frac{1}{5}(2\eta+\varsigma+\zeta)$ (see Section~\ref{Subsection: Aut of M_5}) with toric divisor $E_W$. Since $A_{M_5}(E_W)=2$ and $|\CO(1)-t E_W|_\IQ\ne \varnothing$ for $t\in[0,1]$, the moment polytope of this $\IG_m$-action on $-K_{M_5}=\CO(4)$ is $\BP = [-2,2]\ni 4t-2 =: t'$. We also numerically solve the soliton candidate 
$$\eta_0 \approx 0.1693945440748772.$$ 

\begin{thm}\label{Theorem: M_5 soliton}
The pair $(M_5,\eta_0)$ is weighted K-polystable. 
\end{thm}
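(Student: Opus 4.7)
The plan is to mirror the strategy used for $(M_4,\xi_0)$ in the proof of Theorem \ref{Theorem: M_4 soliton}, but using the longer refinement chain $E_W, E_l, E_C, \tilde{C}$ already set up in Section \ref{Section: quintic Del Pezzo fivefolds} for the proof of Theorem \ref{Theorem: (M_5,(1/8)Q_0) K-polystable}. Since $\eta_0$ is by definition the minimizer of the $\mathbf{H}$-functional on the moment polytope $\BP=[-2,2]$ for the $\IG_m$-action generated by $-\tfrac{1}{5}(2\eta+\varsigma+\zeta)$, it satisfies
\begin{equation*}
\int_0^1 (4t-2)\,e^{-(4t-2)\eta_0}\,\vol(W^{E_W}_{(1,t)})\,dt = 0,
\end{equation*}
and this identity, combined with integration by parts, will immediately give $S^g(R_\bu;E_W)=\tfrac14 A_{M_5}(E_W)=\tfrac12$, so that $A_{M_5}(E_W)/S^g(R_\bu;E_W)=4$ (matching the ratio $\tfrac{1}{3}A/S^g=1$ after the overall normalization by $-K_{M_5}=\CO(4)$).

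Next, using the refinements $W^{E_W}_{\bu\bu}$, $W^{E_l}_{\bu\bu\bu}$, $W^{E_C}_{\bu\bu\bu\bu}$ together with the explicit volume formulas already computed in Section \ref{The properties of Del Pezzo fivefolds}, I would numerically evaluate the four weighted $S$-invariants
\begin{equation*}
S^g(R_\bu;E_W),\quad S^g(W^{E_W}_{\bu\bu};E_l),\quad S^g(W^{E_l}_{\bu\bu\bu};E_C),\quad S^g(W^{E_C}_{\bu\bu\bu\bu};\tilde{C})
\end{equation*}
by inserting the factor $e^{-(4t-2)\eta_0}$ into the integrands of \eqref{Computation: S(O(1);E_W)}, \eqref{Computation: S(W^E_W;E_l)}, \eqref{Computation: S(W^E_l;E_C)}, \eqref{Computation: S(W^E_C;tC)}, dividing by the weighted volume $\Bv^g$ in place of the usual volume $V=5$. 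Applying the weighted Abban--Zhuang estimate (the weighted analogue used in Theorem \ref{Theorem: M_4 soliton}, via \cite[Corollary 5.5]{MW23}) to the minimal $G$-orbit $l_{23}$, these four ratios $A/S^g$ need to all be at least $4$ (so that the normalized $\delta^g$ is $\geq 1$). The first ratio equals $4$ exactly by the integration-by-parts identity; the remaining three should exceed $4$ numerically, analogous to the inequalities $\tfrac{1/3}{0.179638}>1$ and $\tfrac{1/3}{0.211933}>1$ obtained in the $M_4$ case. This yields $\delta^g_{l_{23}}(M_5)\geq 1$, hence weighted K-semistability of $(M_5,\eta_0)$ by \cite[Corollary 5.5]{MW23}.

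Finally, to upgrade from weighted K-semistability to weighted K-polystability, I would repeat the structural argument used at the end of the proof of Theorem \ref{Theorem: (M_5,(1/8)Q_0) K-polystable}: the reductive subgroup $G\subseteq \Aut(M_5)$ has center a two-dimensional torus $\IT=\IT_1\times\IT_2$, and the quotient maps $\tilde{M}_5\dashrightarrow E_W$ and $\tilde{E}_W\dashrightarrow E_l$ identify $\Val^{\IT,\circ}_{M_5}$ with $\Val^\circ_{E_l}\times N_\IR$. Any $G$-invariant test configuration destabilizing polystability would produce, via \cite[Theorem 7]{LX14} in its weighted form, a non-product-type $G$-invariant valuation $v=v_{0,\xi}$ on $M_5$ achieving $A^g/S^g$ equality, and pushing down to a non-trivial $G$-invariant valuation $v_0$ on $E_l$ would contradict the strict inequalities $A/S^g>1$ obtained in the previous step at the refinement level. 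The reader should then conclude weighted K-polystability exactly as in \cite{MW23,MW24}.

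The main obstacle will be the numerical verification of the three remaining weighted $S$-invariants on the refinement tower: the integrands involve the exponential weight against the four piecewise-polynomial volume functions on the polyhedral regions $\D_1^F\cup\D_2^F$ (rather, the analogous regions $\D_A,\D_B,\D_{C_1},\D_{C_2}$ and $\D_{AA},\D_{BB}$) described in Section \ref{The properties of Del Pezzo fivefolds}, and ensuring that the value $\eta_0\approx 0.1694$ (rather than being an exact rational number) lies in the range where all four weighted ratios strictly exceed the critical threshold requires careful numerical integration with adequate precision.
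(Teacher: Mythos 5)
Your proposal follows essentially the same route as the paper: the defining identity for $\eta_0$ gives $S^g(R_\bu;E_W)=\tfrac14 A_{M_5}(E_W)$ exactly, the weighted $S$-invariants along the refinement tower $E_W, E_l, E_C, \tilde{C}$ are evaluated (numerically) with the factor $e^{-(4t-2)\eta_0}$ inserted into the integrands of the already-computed volume formulas, the weighted Abban--Zhuang estimate gives $\delta^g_{l_{23}}(M_5)\ge 1$, and polystability is upgraded via the torus-quotient/valuation-pushdown argument from Theorem \ref{Theorem: (M_5,(1/8)Q_0) K-polystable}. The only minor inaccuracy is your expectation that all three remaining ratios strictly exceed the threshold: the paper finds $S^g(W^{E_W}_{\bu\bu};E_l)=\tfrac34$ exactly, so that ratio equals the threshold rather than exceeding it, which is harmless because semistability only needs the non-strict bound and the strict inequalities required for the polystability contradiction occur at the $E_C$ and $\tilde{C}$ levels, where the paper's values ($\approx 0.390484$ and $\approx 0.089469$) do give strict inequalities.
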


\begin{proof}
By the same argument as in the proof of Theorem~\ref{Theorem: M_4 soliton}, we have 
\begin{align*}
\Bv^g 
&= 
\int_0^1 e^{-(4t-2)\eta_0} 
\frac{\vol(W^{E_W}_{(1,t)})}{4!}dt \\
&\approx 0.04119805228615477, \\
S^g(R_\bu; E_W)
&= \frac{1}{4}A_{M_5}(E_W). 
\end{align*}
Moreover, we have 
\begin{align*}
S^g(W_{\bu\bu}^{E_W}; E_l)
&= \frac{1}{\Bv^g} 
\int_0^1 
e^{-(4t-2)\eta_0} 
\int_0^{1+t} 
\frac{\vol(W_{(1,t)}^{E_W}-sE_l)}{4!} dsdt 
\,=\, \frac{3}{4}, \\
S^g(W_{\bu\bu\bu}^{E_l}; E_C) 
&= \frac{1}{\Bv^g} 
\int_{\D_A\cup\D_B\cup\D_{C_1}\cup\D_{C_2}} 
e^{-(4t-2)\eta_0} \cdot
\frac{\vol(W_{(1,t,s)}^{E_l}-rE_C)}{3!}  drdsdt 
\,\approx\, 0.390484, \\
S^g(W_{\bu\bu\bu\bu}^{E_C}; \tC)
&= \frac{1}{\Bv^g} 
\int_{\D^{\tC}\cup\D^{\tC,0}} 
e^{-(4t-2)\eta_0} \cdot
\frac{\vol(W_{(1,t,s,r)}^{E_C}-u\tC)}{2!}  dudrdsdt 
\,\approx\, 0.089469. 
\end{align*}
Then 
\begin{align*} 
\delta^g_{\tC}(\tE_l,-E_C; \tW^{E_l}_{\bu\bu\bu})
&\ge 
\frac{1}{4}\min\left\{
\frac{A_{\tE_l, -E_C}(E_C)}{S^g(W^{E_l}_{\bu\bu\bu};E_C)}, 
\frac{A_{E_C}(\tC)}{S^g(W^{E_C}_{\bu\bu\bu\bu};\tC)}
\right\}\\
&=
\frac{1}{4}\min\left\{
\frac{2}{0.390484}, \frac{1}{0.089469}
\right\} \,>\, 1. 
\end{align*}
Now the same argument as for Theorems~\ref{Theorem: (M_5,(1/8)Q_0) K-polystable} and~\ref{Theorem: M_4 soliton} shows that $(M_5,\eta_0)$ is weighted K-polystable. 
\end{proof}

\section{K-stability of general special Gushel--Mukai manifolds}
\label{Section: K-stability of general special GM}

In this section, we prove the main theorem of this paper, that is, general special GM manifolds are K-stable. 
Let $X$ be a GM $n$-fold and $M\seq \IP^{n+3}$ be the quintic Del Pezzo $n$-fold. When $X$ is special, there is a double cover $p\colon  X \to M$ branched along a smooth quadric divisor $Q$ which is an ordinary GM $({n-1})$-fold. We have $p^*(K_{M}+\frac{1}{2}Q) = K_{X}$. By \cite[Theorem 1.2]{LZ20}, we need to test the K-stability of $(M,\frac{1}{2}Q)$. 

We first recall some basic results about the projective cone over a log Fano pair. Let $(V, \D_V)$ be an $(n-1)$-dimensional log Fano pair such that $L=-\frac{1}{r}(K_V+\D_V)$ is an ample Cartier divisor for some rational number $r\in (0, n]$. 
We define the projective cone over $V$ with polarization $L$ by 
$$Y=\overline{\CC}(V,L):=\Proj\left(\bigoplus_{m\ge0}\bigoplus_{0\le\lam\le m}H^0(V, (m-\lam)L)s^\lam\right), $$
which is the union of the affine cone $\CC=\Spec\big(\oplus_{m\ge0}H^0(V,  m L)\big)$ and a divisor $V_\infty \cong V$ at infinity. Let $\D_Y$ be closure of $\D_V\times \IC^*$ in $Y$. Then $-(K_Y+\D_Y)$ is Cartier and 
$$-(K_Y+\D_Y)\sim_\IQ(1+r)V_\infty. $$
By \cite[Proposition 5.3]{LX20} or Theorem~\ref{Theorem: strengthen of cone}, we have the following. 

\begin{prop}\label{K-semistability of cone}
If\, $(V, \D_V)$ is K-semistable, then $(Y, \D_Y+(1-\frac{r}{n})V_\infty)$ is K-semistable. 
\end{prop}

We will use the following result about degeneration to the normal cone. See for example \cite[Lemma~2.12]{LZ22} and \cite[Lemma 5.3]{ZZ22}. 

\begin{prop} \label{Proposition: degeneration to normal cone}
  Let $X$ be a Fano variety of dimension $n$, and $V$ be a prime divisor on $X$ such that $-K_X\sim_\IQ (1+r)V$ with $0 < r\le n$. Then $(X, V)$ specially degenerates to $(Y, V_\infty)$, where $Y=\overline{\CC}(V, H)$ and $H=V|_V$. 
In particular, if\, $V$ is K-semistable, then $(X, (1-\frac{r}{n})V)$ is K-semistable.  
\end{prop}

Now we prove the main theorem of this paper. 

\begin{thm} \label{Theorem: general special GM manifolds are K-semistable}
General special GM-manifolds are K-stable. 
\end{thm}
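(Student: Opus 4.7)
My plan is to reduce K-stability of a general special Gushel-Mukai $n$-fold $X_n$ (for $n=4,5,6$) to K-stability of the log Fano pair $(M_n,\tfrac{1}{2}Q_{n-1})$ using the branched double cover structure, then verify that $c=\tfrac{1}{2}$ lies interior to the K-semistable interval $\Kss(M_n,Q_{n-1})$ for a general ordinary GM $(n-1)$-fold $Q_{n-1}$, and finally upgrade K-semistability to K-stability by openness. The main input for $n=4,5$ is Theorem \ref{Theorem: Intro, K-ss domain of DP4 and DP5}, while $n=6$ is bootstrapped inductively via Proposition \ref{Proposition: degeneration to normal cone}.

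\textbf{Step 1: reduction via double cover.} Since $p\colon X_n\to M_n$ is branched along $Q_{n-1}\in|\CO_{M_n}(2)|$ with $p^*(K_{M_n}+\tfrac{1}{2}Q_{n-1})=K_{X_n}$, \cite[Theorem~1.2]{LZ20} together with \cite{Zhu21} shows that $X_n$ is K-semistable (resp.\ K-polystable, K-stable) if and only if $(M_n,\tfrac{1}{2}Q_{n-1})$ is. Thus it suffices to verify $\tfrac{1}{2}\in\Kss(M_n,Q_{n-1})$ for general $Q_{n-1}$. \textbf{Step 2: cases $n=4,5$.} These follow immediately from Theorem \ref{Theorem: Intro, K-ss domain of DP4 and DP5}: the intervals $[\tfrac{1}{9},\tfrac{7}{8}]$ and $[\tfrac{1}{8},\tfrac{4}{5}]$ both contain $\tfrac{1}{2}$ in their interior.

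\textbf{Step 3: case $n=6$ by induction.} By Step 2 in dimension $5$ and Corollary \ref{Corollary: Intro, general GM are K-stable}, a general ordinary GM $5$-fold $Q_5$ is K-stable, hence K-semistable. Apply Proposition \ref{Proposition: degeneration to normal cone} with $X=M_6=\Gr(2,V_5)$ and $V=Q_5$: since $-K_{M_6}=5H\sim\tfrac{5}{2}Q_5$, the parameter $r$ defined by $-K_X\sim(1+r)V$ equals $\tfrac{3}{2}$, so $1-r/n=\tfrac{3}{4}$. The proposition therefore produces K-semistability of $(M_6,\tfrac{3}{4}Q_5)$. Combining this with the K-polystability of $M_6=\Gr(2,V_5)$ (the case $c=0$) and the closed-interval structure of $\Kss(M_6,Q_5)$ in the coefficient $c$ (a standard wall-crossing consequence of continuity/concavity of $c\mapsto\delta(M_6,cQ_5)$), we conclude $[0,\tfrac{3}{4}]\subseteq\Kss(M_6,Q_5)$, and in particular $\tfrac{1}{2}\in\Kss(M_6,Q_5)$. \textbf{Step 4: upgrade to K-stability.} Steps 1--3 give K-semistability of a general special GM $n$-fold. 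Openness of K-semistability in families \cite{BL18b,BLX19,LXZ21}, together with the fact that a general GM manifold has finite automorphism group, upgrades this to K-stability at a general point.

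\textbf{Main obstacle.} The genuine technical core is Theorem \ref{Theorem: Intro, K-ss domain of DP4 and DP5}, whose proof occupies Section \ref{Section: first and last walls}: the last walls $\tfrac{7}{8}$ and $\tfrac{4}{5}$ come from $\beta$-invariants at $\ord_{Q_{n-1}}$, while the first walls $\tfrac{1}{9}$ and $\tfrac{1}{8}$ require constructing explicit K-polystable pairs $(M_n,cQ_{n-1,0})$ and verifying K-polystability via equivariant/toric tests combined with the Abban--Zhuang estimate. A secondary (but lighter) subtlety in Step 3 is the justification that $\Kss(M_6,Q_5)$ is a closed interval in $c$; this can be extracted from continuity of $\delta$ and properness of the K-moduli wall-crossing.
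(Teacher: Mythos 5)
Your Steps 1--3 correctly reproduce the K-semistability half of the paper's argument (reduction via the double cover, degeneration to the normal cone, and interpolation), but there is a genuine gap in Step 4: openness of K-semistability plus discreteness of $\Aut$ does \emph{not} upgrade K-semistability of a general member to K-stability. A K-semistable Fano (or log Fano pair) with finite automorphism group can be strictly K-semistable --- it then degenerates to a K-polystable object with larger automorphism group, but nothing forces the original member to be K-stable. What openness of the K-stable locus actually buys is: \emph{if you exhibit one K-stable member of an irreducible family, then the general member is K-stable}. Your proposal never exhibits such a member. Relatedly, Step 2 over-claims: Theorem \ref{Theorem: Intro, K-ss domain of DP4 and DP5} describes the K-\emph{semistable} domain, so having $\tfrac12$ in its interior only yields K-semistability of $(M_n,\tfrac12 Q_{n-1})$, not K-stability; interpolation \`a la \cite[Proposition 2.13]{ADL19} produces K-stability on an interval only when at least one endpoint is already known to be K-stable.

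The missing ingredient is precisely what the paper supplies in Theorem \ref{Theorem: strengthen of cone}(b): a strengthened cone/Abban--Zhuang estimate showing that if the divisor $Q$ is K-\emph{stable} and one controls $\delta(M_n)$ (this is where the computations $\delta(M_4)=\tfrac{25}{27}$ and $\delta(M_5)=\tfrac{15}{16}$ enter), then $(M_n,cQ)$ is K-\emph{stable} for all $c$ slightly below the last wall $1-\tfrac{r}{n}$. Combined with interpolation against the K-semistable pairs from the first half, this yields K-stability of $(M_n,cQ)$ on an open interval containing $\tfrac12$, hence a genuinely K-stable special GM $n$-fold; openness then finishes, and the induction $n=4\Rightarrow 5\Rightarrow 6$ proceeds because a K-stable special GM $n$-fold produces (by openness) a K-stable ordinary GM $n$-fold to serve as the branch divisor one dimension up. A further caution: the $M_5$ case of Theorem \ref{Theorem: Intro, K-ss domain of DP4 and DP5} is itself proved using a K-stable ordinary GM fourfold obtained from the $n=4$ case of the present theorem, so quoting it wholesale for $n=5$ without first settling $n=4$ in the K-stable (not merely K-semistable) sense would be circular.
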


\begin{proof}
It was proved by \cite{AZ22} that smooth GM threefolds are all K-stable. We need to prove that there exists a K-stable special GM $n$-fold for $n=4,5,6$. Then by the openness of K-stability, we are done.  
We first prove that general special GM $n$-folds are K-semistable for $n=4,5,6$. It suffices to find a smooth quadric divisor $Q_{n-1}\seq M_n$ such that $(M_n,\frac{1}{2}Q_{n-1})$ is K-semistable. 

For $n=4$, note that $Q$ is a GM threefold, which is K-stable by \cite[Theorem C]{AZ21}. Since $-K_M \sim_\IQ \frac{3}{2}Q$, we have $r=\frac{1}{2}$ and $1-\frac{r}{n}=\frac{7}{8}=:c_\max$. By Proposition~\ref{Proposition: degeneration to normal cone}, we conclude that $(M, \frac{7}{8}Q)$ is K-semistable. On the other hand, we consider a smooth hyperplane section $H$ of $M$. Then $H\cong M_3$, which is K-polystable by \cite[Example 3.4.1]{ACC+}. BY the same method, since $-K_M=3H$, we have $r=2$, $1-\frac{r}{n}=\frac{1}{2}$. Hence $(M, \frac{1}{2}H)$ is K-semistable. One can construct a family that deforms $(M, \frac{1}{2}H)$ to $(M, \frac{1}{4}Q)$ for some smooth quadric divisor $Q$ of $M$. By the openness of K-semistability, we conclude that there exists a smooth quadric divisor $\tQ$ on $M$ such that $(M, \frac{1}{4}\tQ)$ is K-semistable. Finally, by interpolation of K-stability (see \cite[Proposition~2.13]{ADL19}), we see that $(M, c\tQ)$ is K-semistable for any $\frac{1}{4} \le c\le \frac{7}{8}$. In particular, $(M, \frac{1}{2}\tQ)$ is K-semistable. 

For $n=5$, we consider the pair $(M=M_5, \frac{1}{2}(H+H'))$, where $M=M_5$ and $H$, $H'$ are two distinct smooth hyperplane sections of $M$ with smooth intersection $I=H\cap H'$. So $H$ and $H'$ are $M_4$ and $I\cong M_3$. By Proposition~\ref{Proposition: degeneration to normal cone}, this pair can specially degenerate to $\big(\overline{\CC}(H, \CO_H(1)), \frac{1}{2}(H_\infty+\CE)\big)$, where $\CE=\overline{\CC}(I, \CO_I(1))$. Applying Proposition~\ref{K-semistability of cone} to $(H, \frac{1}{2}I)$ and $r=\frac{5}{2}$, since $\CO_H(1)\sim_\IQ -\frac{1}{r}(K_H+\frac{1}{2}I)$, we see that $\big(\overline{\CC}(H, \CO_H(1)), \frac{1}{2}(H_\infty+\CE)\big)$ is K-semistable. 
Then by the openness of K-semistability, $(M, \frac{1}{2}(H+H'))$ is K-semistable.  Similarly to the proof of the $n=4$ case, we can deform $(M, \frac{1}{2}(H+H'))$ to some $(M, \frac{1}{2}Q)$, and we get the K-semistability of $(M, \frac{1}{2}\tQ)$ for some smooth quadric divisor $\tQ$. 

We also consider the degeneration induced by some $Q\seq M_5$ which is an ordinary GM fourfold. This will be useful in the proof of K-stability. In the $n=4$ case, we already have a K-semistable special GM fourfold. By the openness of K-semistability, there exists a K-semistable ordinary GM fourfold $Q$. Since $-K_M\sim 2Q$, we have $r=1$ and $c_\max=\frac{4}{5}$. Then by Proposition~\ref{Proposition: degeneration to normal cone}, we see that $(M,\frac{4}{5}Q)$ is K-semistable. 
By interpolation we have that $(M, cQ)$ is K-semistable for any $\frac{1}{2}\le c\le \frac{4}{5}$.

For $n=6$, we already have a K-polystable base $M=M_6=\Gr(2,5)$. The same argument as in the previous paragraph shows that $(M,\frac{3}{4}Q)$ is K-semistable for some $Q$. By interpolation, we see that $(M, cQ)$ is K-semistable for any $0\le c\le \frac{3}{4}$. In particular, $(M, \frac{1}{2}Q)$ is K-semistable. 

Next, we prove that general special GM $n$-folds are K-stable for $n=4,5,6$. We give a proof by induction on $n$ using Theorem~\ref{Theorem: strengthen of cone}. 

Since any ordinary GM threefold $Q_3\seq M_4$ is K-stable and $\delta(M_4)=\frac{25}{27}>\frac{5}{12}$, we see that $(M_4, cQ_3)$ is K-stable for any $0\ll c < \frac{7}{8}$ by Remark~\ref{Remark. strengthen of cone}. By interpolation, $(M_4, cQ_3)$ is K-stable for any $\frac{1}{4}< c< \frac{7}{8}$ (this holds for any $\frac{1}{9}< c< \frac{7}{8}$ and general $Q_3\seq M_4$ by Theorem~\ref{Theorem: (M_4,(1/9)Q_0) K-polystable}). In particular, $(M_4, \frac{1}{2}Q_3)$ is K-stable, and $\Aut(M_4, Q_3)$ is finite. Let $X_4\to (M_4, \frac{1}{2}Q_3)$ be the double cover in Remark~\ref{Remark. structure of GM}\eqref{RsGM-2}. Then $\Aut(X_4)$ is finite by Lemma~\ref{Lemma. Aut(X_n) to Aut(M_n,Q_n-1) ker = Z/2 }. Hence $X_4$ is K-stable by \cite[Theorem 1.2]{LZ20}. 

By the openness of K-stability, we see that a general $Q_4\seq M_5$ is K-stable. Since $\delta(M_5)=\frac{15}{16}>\frac{3}{5}$, we see that $(M_5, cQ_4)$ is K-stable for any $0\ll c< \frac{4}{5}$ and general $Q_4$ by Remark~\ref{Remark. strengthen of cone}, hence for any $\frac{1}{8}< c < \frac{4}{5}$ and general $Q_4$ by Theorem~\ref{Theorem: (M_5,(1/8)Q_0) K-polystable} using interpolation. In particular, $(M_5, \frac{1}{2}Q_4)$ is K-stable and $\Aut(M_5, Q_4)$ is finite. Let $X_5\to (M_5, \frac{1}{2}Q_4)$ be the corresponding double cover. Then $\Aut(X_5)$ is finite by Lemma~\ref{Lemma. Aut(X_n) to Aut(M_n,Q_n-1) ker = Z/2 }, and $X_5$ is K-stable by \cite[Theorem 1.2]{LZ20}. 

Also by the openness of K-stability, we get a K-stable ordinary GM fivefold $Q_5 \seq M_6$. With the same argument, we conclude that $(M_6, cQ_5)$ is K-stable for any $0< c < \frac{3}{4}$. In particular, $(M_6, cQ_5)$ is K-stable with finite automorphism group. Hence its double cover $X_6$ has finite automorphism group by Lemma~\ref{Lemma. Aut(X_n) to Aut(M_n,Q_n-1) ker = Z/2 } and is K-stable by \cite[Theorem 1.2]{LZ20}. 

We conclude by the openness of K-stability that general special GM-manifolds are K-stable. 
\end{proof}

\renewcommand{\appendixname}{Appendix. Intersection theory of \texorpdfstring{$\boldsymbol{\IG(1,4)}$}{G(1,4)}}

\addappendix 

We use the notation of \cite{Ful98}. Let $\IG(d,n)$ be the Grassmannian parametrizing $d$-dimensional subspaces in $\IP^n$ (note that $\IG(d,n)\cong \Gr(d+1,n+1)$). For any flag $A_0\subset \cdots \subset A_d$ of subspaces in $\IP^n$ with $\dim A_i = a_i$, we have the following Schubert subvariety:  
\begin{eqnarray*} 
\Omega(A_0,\ldots, A_d) = \{L\in \IG(d,n)\mid \dim(L\cap A_i) \ge i, 0\le i\le d \}. 
\end{eqnarray*}
The Schubert cycle $[\Omega(A_0,\ldots, A_d)]$ depends only on $a_0,\ldots, a_d$ and is denoted by $(a_0,\ldots, a_d)$. One may also represent the Schubert cycles by Chern polynomials (see \cite[Section 14.7]{Ful98}) 
\begin{eqnarray*} 
\{\lam_0,\ldots, \lam_d\}=(a_0,\ldots, a_d), 
\end{eqnarray*} 
where $\lam_i=n-d+i-a_i$. 

Recall $V_5 = \spn\{e_0,\ldots, e_4\}$. Let $A_0=\IP(\spn\{e_0,e_1\})$ and $A_1=\IP(\spn\{e_0,e_1,e_2\})$; then 
$$S=\Omega(A_0,A_1)\seq \IG(1,4), $$ 
by Equation~(\ref{Eqnarray. def. S}), via the Pl\"ucker embedding $\IG(1,4)\seq \IP(\wedge^2V_5)$. Let $B_0=\IP(\spn\{e_4\})$ and $B_1=\IP(V_5)$; then 
$$W=\Omega(B_0,B_1)\seq \IG(1,4), $$ 
by Equation~(\ref{Eqnarray. def. W}). 
We see that $[S]=(1,2)=\{2,2\}$ and $[W]=(0,4)=\{3,0\}$ as Schubert cycles of $\IG(1,4)$. 

As in \cite[Example 14.7.2]{Ful98}, we denote the Schubert cycles of $\IG(1,4)$ by 
\begin{eqnarray*}
\left\{ \begin{array}{ccccc}
1 &=& (3,4) &=& \{0,0\},\\
\sigma_1 &=& (2,4) &=& \{1,0\},\\
\sigma_2 &=& (1,4) &=& \{2,0\}, \\
\sigma_3 &=& (0,4) &=& \{3,0\}, \\
\pt &=& (0,1) &=& \{3,3\}, \\
\end{array} \right. \quad
\left\{ \begin{array}{ccccc}
\chi_2 &=& (2,3) &=& \{1,1\},\\
\chi_3 &=& (1,3) &=& \{2,1\},\\
\chi_4 &=& (0,3) &=& \{3,1\},\\
\psi_4 &=& (1,2) &=& \{2,2\},\\
\psi_5 &=& (0,2) &=& \{3,2\}.
\end{array} \right. 
\end{eqnarray*}
We see that $\sigma_i=c_i(\CO_{\IG(1,4)}(1))$. 
By Giambelli's formula, see \cite[Proposition 14.6.4]{Ful98}, we have 
\begin{eqnarray*}
\left\{ \begin{array}{ccccc}
\chi_2 &=& \sigma_1^2-\sigma_2,\\
\chi_3 &=& \sigma_1\sigma_2-\sigma_3,\\
\chi_4 &=& \sigma_1\sigma_3,
\end{array} \right. \quad
\left\{ \begin{array}{ccccc}
\psi_4 &=& \sigma_1\sigma_3 - \sigma_2^2,\\
\psi_5 &=& \sigma_2\sigma_3,\\
\pt &=& \sigma_3^2. 
\end{array} \right. 
\end{eqnarray*}
By Pieri's formula, see \cite[Proposition 14.6.1]{Ful98}, we have a natural pairing 
\begin{eqnarray*}
\sigma_1\psi_5
= \sigma_2\chi_4
= \sigma_3^2
= \chi_3^2
= \chi_2\psi_4 = \pt, 
\end{eqnarray*}
and other pairings to the top degree are zero. Moreover, we have 
\begin{eqnarray*}
\left\{ \begin{array}{ccccc}
\sigma_1\chi_2 &=& \chi_3,\\
\sigma_1\chi_3 &=& \chi_4+\psi_4,\\
\sigma_1\chi_4 &=& \psi_5,\\
\sigma_1\psi_4 &=& \psi_5, 
\end{array} \right. \quad
\left\{ \begin{array}{ccccc}
\sigma_2\chi_2 &=& \chi_4,\\
\sigma_2\chi_3 &=& \psi_5,\\
\sigma_2\chi_4 &=& \pt,\\
\sigma_2\psi_4 &=& 0,\\
\end{array} \right. \quad
\left\{ \begin{array}{ccccc}
\sigma_1^2 &=& \sigma_2+\chi_2,\\
\sigma_1^3 &=& \sigma_3+2\chi_3,\\
\sigma_1^4 &=& 3\chi_4+2\psi_4,\\
\sigma_1^5 &=& 5\psi_5. 
\end{array} \right. 
\end{eqnarray*}

For any vector bundle $E$, we denote its total Chern class by $c(E)$ and its total Segre class by $s(E)$  (note that $c(E)\cdot s(E)=1$). 

\begin{thm}
\label{Theorem. c(N_S/M_4)}
The total Chern class of\, $N_{S/M_4}$ is 
$c(N_{S/M_4}) = 1+ 2c_1^2, $ where $c_1=c_1(\CO_S(1))$. 
\end{thm}

\begin{proof}
We simply set $\Gr=\Gr(2,V_5)=\IG(1,4)$. 
We have the following commutative diagram of short exact sequences of vector bundles on $S$: 
\begin{center}
\begin{tikzcd}[row sep=1.5em, column sep=1.5em]
  & & 0 \arrow[d] & 0 \arrow[d] & \\
  0 \arrow[r] & T_S \arrow[r] \arrow[d, equal] & T_{M_4}|_S \arrow[r] \arrow[d] & N_{S/M_4} \arrow[r] \arrow[d] & 0 \\
  0 \arrow[r] & T_S \arrow[r] & T_\Gr|_S \arrow[r] \arrow[d] & N_{S/\Gr} \arrow[r] \arrow[d] & 0\rlap{,} \\
  & & N_{M_4/\Gr}|_S \arrow[r, equal] \arrow[d] & N_{M_4/\Gr}|_S \arrow[d] & \\
  & & 0 & 0 &
\end{tikzcd}
\end{center}
where $N_{M_4/\Gr} \cong \CO_{M_4}(1)^{\oplus 2}$. Hence 
\begin{align*}
c(N_{S/M_4}) 
&= c(T_{M_4})|_S \cdot s(T_S) 
\,=\, \left(c(T_{\Gr})|_{M_4}\cdot s(N_{M_4/\Gr})\right)|_S \cdot s(T_S)\\
&= \left(c(T_{\Gr})\cdot (1+\sigma_1)^{-2}\right)|_S \cdot s(T_S). 
\end{align*}
By the Euler sequence of $\Gr$, we have the total Chern class of the tangent bundle $T_\Gr$: 
\begin{eqnarray*}
c(T_\Gr) 
=  1 + 5\sigma_1 + (12\sigma_1^2-\sigma_2) 
 + 15(2\sigma_1\sigma_2-\sigma_3) + 25\sigma_2^2 
 + 30\sigma_2\sigma_3 + 10\sigma_3^2. 
\end{eqnarray*}
We also have $c(T_S) = (1+c_1)^3$, where $c_1=c_1(\CO_S(1))$. Since $[S]=\psi_4$, $\sigma_1\psi_4=\psi_5$ and $\sigma_2\psi_4=0$, we have $\sigma_1|_S=c_1, \sigma_2|_S = 0$. Hence 
\begin{equation*}
c(N_{S/M_4}) 
= (1+5c_1+12c_1^2)(1-c_1+c_1^2)^2 \cdot (1-3c_1+6c_1^2)
= 1+2c_1^2.\qedhere
\end{equation*}
\end{proof}

\begin{thm}
\label{Theorem. c(N_W/M_5)}
The total Chern class of\, $N_{W/M_5}$ is 
$c(N_{W/M_5}) = 1+ c_1^2, $ where $c_1=c_1(\CO_W(1))$. 
\end{thm}

\begin{proof}
  Replacing $S$, $M_4$ by $W$, $M_5$, respectively, we have a similar diagram of short exact sequences to that in Theorem~\ref{Theorem. c(N_S/M_4)}. In this case, we have $N_{M_5/\Gr} \cong \CO_{M_5}(1)$ and $c(T_W) = (1+c_1)^4$, where $c_1=c_1(\CO_W(1))$. Since $[W]=\sigma_3$, $\sigma_1\sigma_3=\chi_4$, $\sigma_2\sigma_3=\psi_5$ and $\sigma_3^2=\pt$, we have $\sigma_i|_W=c_1^i$ for $i=1,2,3$. Hence
\begin{align*}
c(N_{W/M_5}) 
&= \left(c(T_{\Gr})|_{M_5}\cdot s(N_{M_5/\Gr})\right)|_W \cdot s(T_W)\\
&= (1 + 5c_1 + 11c_1^2 + 15c_1^3)
    (1 -  c_1 +   c_1^2 -   c_1^3) \cdot 
    (1 - 4c_1 + 10c_1^2 - 20c_1^3) 
\,=\, 1 + c_1^2.  \qedhere
\end{align*}
\end{proof}

\phantomsection 

\newcommand{\etalchar}[1]{$^{#1}$}
\providecommand{\bysame}{\leavevmode\hbox to3em{\hrulefill}\thinspace}

\end{document}